\documentclass[a4paper, 12pt]{article} 

\usepackage[utf8]{inputenc}
\usepackage{lmodern}
\usepackage[top=25mm,bottom=30mm,left=25mm,right=25mm]{geometry}
\usepackage{amssymb, amsfonts, amsthm, amsmath, mathtools, mathrsfs}
\usepackage{stmaryrd,bbm}
\usepackage[dvipsnames]{xcolor}
\usepackage{tikz,caption,graphicx,subfigure}
\usepackage[english]{babel}
\usepackage[colorlinks]{hyperref}

\theoremstyle{plain}
\newtheorem{theorem}{Theorem}[section]
\newtheorem{corollary}[theorem]{Corollary}
\newtheorem{lemma}[theorem]{Lemma}
\newtheorem{proposition}[theorem]{Proposition}

\newtheorem{conjecture}[theorem]{Conjecture}
\newtheorem{question}[theorem]{Question}

\theoremstyle{definition}
\newtheorem{definition}[theorem]{Definition}

\theoremstyle{remark}
\newtheorem{remark}[theorem]{Remark}


\let\originalleft\left
\let\originalright\right
\renewcommand{\left}{\mathopen{}\mathclose\bgroup\originalleft}
\renewcommand{\right}{\aftergroup\egroup\originalright}


\newcommand{\N}{\mathbb{N}}
\newcommand{\Z}{\mathbb{Z}}
\newcommand{\R}{\mathbb{R}}

\newcommand{\T}{\mathbb{T}}
\newcommand{\1}{\mathbbm{1}}

\newcommand{\ind}[1]{\1_{\left\{#1\right\}}}

\newcommand{\floor}[1]{{\left\lfloor #1 \right\rfloor}}

\newcommand{\abs}[1]{\left\lvert #1 \right\rvert}

\numberwithin{equation}{section}

\DeclareMathOperator{\E}{\mathbb{E}}

\renewcommand{\P}{\mathbb{P}}

\newcommand{\calN}{\mathcal{N}}

\newcommand{\diff}{\mathrm{d}}
\newcommand{\e}{\mathrm{e}}

\renewcommand{\bar}[1]{\overline{#1}}
\newcommand{\egaldistr}{\overset{(d)}{=}}
\renewcommand{\tilde}[1]{\widetilde{#1}}

\renewcommand{\rho}{\varrho}
\renewcommand{\epsilon}{\varepsilon}

\newcommand\relphantom[1]{\mathrel{\phantom{#1}}}

\title{An exactly solvable continuous-time Derrida--Retaux~model}
\author{Yueyun Hu\footnote{LAGA, UMR 7539, CNRS Université Paris 13 - Sorbonne Paris Cité, Université Paris 8, 99 avenue Jean-Baptiste Cl\'ement, 93430 Villetaneuse, France.\newline {\tt yueyun@math.univ-paris13.fr} partially supported by ANR MALIN and ANR SWIWS} \and Bastien Mallein\footnote{LAGA, UMR 7539, Université Paris 13 - Sorbonne Paris Cité, Université Paris 8, 99 avenue Jean-Baptiste Cl\'ement, 93430 Villetaneuse, France. \newline {\tt mallein@math.univ-paris13.fr} partially supported by ANR MALIN} \and Michel Pain\footnote{DMA, \'Ecole Normale Sup\'erieure, PSL, CNRS, 45 rue d'Ulm, 75005 Paris, France \& LPSM, Sorbonne Universit\'e, Sorbonne Paris Cit\'e, CNRS, 4 place Jussieu, 75005 Paris, France.\newline {\tt michel.pain@ens.fr} partially supported by ANR MALIN}}

\begin{document}

\maketitle

\begin{abstract}
To study the depinning transition in the limit of strong disorder, Derrida and Retaux \cite{DR14} introduced a discrete-time max-type recursive model. It is believed that for a large class of recursive models, including Derrida and Retaux' model, there is a highly non-trivial phase transition. In this article, we present a continuous-time version of Derrida and Retaux model, built on a Yule tree, which yields an exactly solvable model belonging to this universality class. The integrability of this model allows us to study in details the phase transition near criticality and can be used to confirm the infinite order phase transition predicted by physicists. We also study the scaling limit of this model at criticality, which we believe to be universal.
\end{abstract}

\section{Introduction}
 
The problem of the depinning transition has attracted much attention among mathematicians and physicists over the last thirty years, see Giacomin \cite{G07, G11} and the references therein. To study the depinning transition in the limit of strong disorder, Derrida and Retaux introduced in \cite{DR14} a max-type recursive model, that can be defined, up to a simple change of variables, as follows:
\begin{equation}
  \label{eqn:derridaRetaux}
   X_{n+1} \egaldistr \left( X_n + \tilde{X}_n -1 \right)_+, \qquad \forall\,  n \geq 0, 
\end{equation}
where, for any $x \in \mathbb R$, $x_+ \coloneqq \max(x, 0)$, $\tilde{X}_n$ denotes an independent copy of $X_n$, and $\egaldistr$ stands for the identity in distribution. Note that this model was previously studied by Collet, Eckmann, Glaser and Martin \cite{CEGM84} for random variables taking integer values. Through studying the sequence of probability-generating functions, they identified the critical manifold of $(X_n)$, provided that $X_0 \in \N$ a.s. With renormalization group arguments, Derrida and Retaux studied this max-type recursive equation for real-valued random variables.

This recursive model can be viewed as a simplified version of the recursion equation obtained by taking the logarithm of the partition function $Z_n$ of the so-called hierarchical model, which satisfies
\begin{equation}
  \label{eqn:depinning}
  Z_{n+1} \egaldistr \frac{Z_n \tilde{Z}_n + b-1}{b}, \qquad \forall \, n \geq 0, 
\end{equation}
where $b>1$  is a fixed constant and $\tilde{Z}_n$ is an independent copy of $Z_n$. 
In other words, Equation \eqref{eqn:derridaRetaux} is a tropicalization of Equation \eqref{eqn:depinning}.
The hierarchical model is a pinning model on the diamond lattices, introduced by Derrida, Hakim and Vannimenus \cite{dhv92} and studied in depth by a series of recent works \cite{MoG08, DGLT09, glt2010, Lac10, BeT13}.
 
Despite its simplicity in the definition, the Derrida--Retaux model (which we often abbreviate in the rest of the article as DR model) turns out to exhibit complex behaviours at criticality, which are difficult to prove rigorously, and many fundamental questions remain open. It is believed that for a large class of recursive models, including \eqref{eqn:derridaRetaux} and \eqref{eqn:depinning}, there is an infinite order phase transition from the pinned to the unpinned regime. In this article, we present an exactly solvable version of a continuous-time generalization of the DR model. The integrability of the model allows us to study in details the phase transition of our model near criticality and can be used to clarify some of the conjectures made for the discrete DR models.

Before the introduction of the continuous-time version model, we (re)-present  the discrete-time  model \eqref{eqn:derridaRetaux} from three different though equivalent viewpoints. In particular, we give a process description of this model, which is best suited for comparison with the continuous-time version model that we are going to introduce.

\paragraph*{Notation}
By $f(t) \sim g(t)$ as $t \to a$, we mean that $\lim_{t \to a} f(t)/g(t) = 1$. The constant $C$ is a generic positive constant, which might change from line to line.
Moreover, we set $\N \coloneqq \{0,1,2,\dots \}$ and $\R_+ \coloneqq [0,\infty)$.

\subsection{The discrete-time Derrida--Retaux model}

At first,  one might observe that \eqref{eqn:derridaRetaux} does not describe a stochastic process $(X_n)_{n \geq 0}$, but rather a measure-valued sequence $(\mu_n)_{n \geq 0}$ where $\mu_n$ is the law of $X_n$. This leads us to the following definition.
\begin{definition}[(Discrete-time) Derrida--Retaux model]
A sequence $(\mu_n)_{n \geq 0}$ of probability distributions on $\R_+$ is called a \textit{DR model} if for all continuous bounded function $f$, we have
\[
\forall n \geq 0, \quad 
  \int_{\R_+} f(x) \mu_{n+1}(\diff x) = \int_{\R_+} f\left( (x+y-1)_+ \right) \mu_n(\diff x) \mu_n(\diff y).
\]
\end{definition}

The fundamental quantity in a DR model is its associated \textit{free energy}, defined as follows:
\begin{equation}
  \label{eqn:freeEnergy}
  F_\infty \coloneqq \lim_{n \to \infty} 2^{-n} \int_0^\infty x\mu_n(\diff x).
\end{equation}
It allows the distinction between the pinned and the unpinned regimes in this model. We refer to the model as \textit{unpinned} if $F_\infty=0$, and as \textit{pinned} if $F_\infty>0$. Note that as for $x, y \in \R_+$, $(x + y - 1)_+ \leq x + y$, the sequence $( 2^{-n}\int_0^\infty x \mu_n(\diff x))_{n \geq 0}$ is non-increasing, therefore the limit in \eqref{eqn:freeEnergy} is always well-defined.
One of the main questions for this model is to determine for which starting measure $\mu_0$ the model ends up being pinned or unpinned.

When the support of $\mu_0$ is included in $\N$, this question was previously answered by Collet et al.\@  \cite{CEGM84}. By a careful study of the recursion equation for the generating functions of $\mu_n$, they proved that
\begin{equation}
  \label{eqn:cegm}
  F_\infty = 0 \iff \int_0^\infty x 2^x \mu_0(\diff x) \leq \int_0^\infty 2^x \mu_0(\diff x) < \infty.
\end{equation}
However, this characterization of the unpinned phase, which is valid for integer-valued random variables does not extend to more general initial conditions, even to the half-integers valued random variables. We refer to Chen et al.\@ \cite{CDHLS17} for further discussions and a list of open questions on this topic.

To discuss the phase transition from the pinned to the unpinned regime, it will be more convenient to specify the mass at $0$ of $\mu_0$. Consider an initial distribution $\mu_0$ of form:
\[\mu_0= p \delta_0 + (1-p) \vartheta,\]
where $p\in [0, 1]$  and $\vartheta$ denotes a probability measure on $\R_+ \backslash \{0\}$. Denote by $F_\infty(p)$ the associated free energy.  Clearly, $p\to F_\infty(p)$ is non-increasing. Write $p_c$ for the critical parameter distinguishing between the pinned and the unpinned regimes
\[p_c\coloneqq \sup\{p\in [0, 1]: F_\infty(p) >0\},\]
with the convention that $\sup\varnothing =0$. 

Assuming that $p_c \in (0,1)$,  are of particular interest to characterize the rate of convergence of $F_\infty(p) \to 0$ as $p \uparrow p_c$, and to study the critical regime when $p=p_c$. For the rate of convergence, assuming $p_c >0$ and some integrability conditions on $\vartheta$, Derrida and Retaux \cite{DR14} conjectured that there exists some constant $C>0$ such that
\begin{equation}
  \label{derridaconjecture}
  F_\infty(p) = \exp \left( - \frac{C+o(1)}{\sqrt{p_c-p}}\right), \qquad p \uparrow p_c.
\end{equation}
When the support of $\vartheta$ is included in $\mathbb N\backslash \{0\}$, a weaker form of the above conjecture has been obtained  in a recent work by Chen et al.\@ \cite{CDDHLS}.

It is necessary to assume some integrability assumptions on $\vartheta$ in Derrida and Retaux' conjecture. Indeed, Collet et al.\@ \cite{CEGM84}'s result yields the following characterization:
\[ p_c < 1 \quad \Longleftrightarrow \quad \int_0^\infty x 2^x \vartheta(\diff x) < \infty,\]
see  Hu and Shi \cite{HuS17}. Furthermore, the latter paper shows  some  non-universal behaviors of $F_\infty(p)$ as $p \uparrow 1$ for a class of   distributions $\vartheta$ satisfying $p_c=1$, in contrast with \eqref{derridaconjecture}. 

Now we turn our attention  to a second viewpoint of the Derrida and Retaux model. 
If the support of $\mu_0$ is included in $\N$, the law $\mu_n$ can be constructed as a so-called \emph{parking scheme} on the binary tree, which can be described as follows.
For $n \in \N$, we denote by $\mathbb{T}_n$ the binary tree of height $n$, rooted at $\varnothing$.
To each leaf of the tree at depth $n$ is attached an i.i.d.\@ random variable with law $\mu_0$, representing the number of cars starting from that position.
Each internal node has a single parking spot, initially empty.
All cars drive from the leaves toward the root of the tree, parking as soon as possible.
For each $u \in \T_n$, we write $X^n(u)$ the number of cars passing by the parking spot at position $u$ without being allowed to park (because another car is already parked).
We call the random map $X^n \colon \T_n \to \R_+$ the Derrida--Retaux tree (or DR tree for short). See Figure \ref{fig:parking_scheme}.
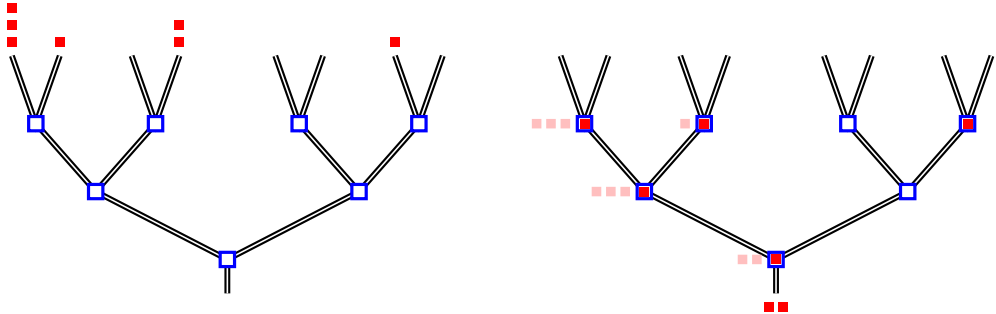
\begin{figure}[ht]
\centering
\subfigure{%
\begin{tikzpicture}[xscale = 0.315,yscale=.9]
  \draw [thick, double] (0,3) -- (1,2) -- (2,3);
  \draw [thick, double] (5,3) -- (6,2) -- (7,3);
  \draw [thick, double] (11,3) -- (12,2) -- (13,3);
  \draw [thick, double] (16,3) -- (17,2) -- (18,3);
  
  \draw [thick, double] (1,2) -- (3.5,1) -- (6,2);
  \draw [thick, double] (12,2) -- (14.5,1) -- (17,2);
  
  \draw [thick, double] (3.5,1) -- (9,0) -- (14.5,1);
  
  \draw [thick, double] (9,0) -- (9,-.5);

  \fill [color=white]  (1,2) ++ (+0.3,+0.105) -- ++ (-0.6,0) -- ++ (0,-.21) -- ++ (.6,0) -- ++ (0,.21) -- cycle;
  \draw [very thick, color=blue] (1,2)  ++ (+0.3,+0.105) -- ++ (-0.6,0) -- ++ (0,-.21) -- ++ (.6,0) -- ++ (0,.21) -- cycle;
  \fill [color=white]  (6,2) ++ (+0.3,+0.105) -- ++ (-0.6,0) -- ++ (0,-.21) -- ++ (.6,0) -- ++ (0,.21) -- cycle;
  \draw [very thick, color=blue] (6,2)  ++ (+0.3,+0.105) -- ++ (-0.6,0) -- ++ (0,-.21) -- ++ (.6,0) -- ++ (0,.21) -- cycle;
  \fill [color=white]  (12,2) ++ (+0.3,+0.105) -- ++ (-0.6,0) -- ++ (0,-.21) -- ++ (.6,0) -- ++ (0,.21) -- cycle;
  \draw [very thick, color=blue] (12,2)  ++ (+0.3,+0.105) -- ++ (-0.6,0) -- ++ (0,-.21) -- ++ (.6,0) -- ++ (0,.21) -- cycle;
  \fill [color=white]  (17,2) ++ (+0.3,+0.105) -- ++ (-0.6,0) -- ++ (0,-.21) -- ++ (.6,0) -- ++ (0,.21) -- cycle;
  \draw [very thick, color=blue] (17,2)  ++ (+0.3,+0.105) -- ++ (-0.6,0) -- ++ (0,-.21) -- ++ (.6,0) -- ++ (0,.21) -- cycle;
  \fill [color=white]  (3.5,1) ++ (+0.3,+0.105) -- ++ (-0.6,0) -- ++ (0,-.21) -- ++ (.6,0) -- ++ (0,.21) -- cycle;
  \draw [very thick, color=blue] (3.5,1)  ++ (+0.3,+0.105) -- ++ (-0.6,0) -- ++ (0,-.21) -- ++ (.6,0) -- ++ (0,.21) -- cycle;
  \fill [color=white]  (14.5,1) ++ (+0.3,+0.105) -- ++ (-0.6,0) -- ++ (0,-.21) -- ++ (.6,0) -- ++ (0,.21) -- cycle;
  \draw [very thick, color=blue] (14.5,1)  ++ (+0.3,+0.105) -- ++ (-0.6,0) -- ++ (0,-.21) -- ++ (.6,0) -- ++ (0,.21) -- cycle;
  \fill [color=white]  (9,0) ++ (+0.3,+0.105) -- ++ (-0.6,0) -- ++ (0,-.21) -- ++ (.6,0) -- ++ (0,.21) -- cycle;
  \draw [very thick, color=blue] (9,0)  ++ (+0.3,+0.105) -- ++ (-0.6,0) -- ++ (0,-.21) -- ++ (.6,0) -- ++ (0,.21) -- cycle;
  
  \fill [color=red] (0,3) ++ (0,0.2) ++ (+0.2,+0.07) -- ++ (-0.4,0) -- ++ (0,-.14) -- ++ (.4,0) -- ++ (0,.14) -- cycle;
  \fill [color=red] (0,3) ++ (0,0.45) ++ (+0.2,+0.07) -- ++ (-0.4,0) -- ++ (0,-.14) -- ++ (.4,0) -- ++ (0,.14) -- cycle;
  \fill [color=red] (0,3) ++ (0,0.7) ++ (+0.2,+0.07) -- ++ (-0.4,0) -- ++ (0,-.14) -- ++ (.4,0) -- ++ (0,.14) -- cycle;
  \fill [color=red] (2,3) ++ (0,0.2) ++ (+0.2,+0.07) -- ++ (-0.4,0) -- ++ (0,-.14) -- ++ (.4,0) -- ++ (0,.14) -- cycle;
  \fill [color=red] (7,3) ++ (0,0.2) ++ (+0.2,+0.07) -- ++ (-0.4,0) -- ++ (0,-.14) -- ++ (.4,0) -- ++ (0,.14) -- cycle;
  \fill [color=red] (7,3) ++ (0,0.45) ++ (+0.2,+0.07) -- ++ (-0.4,0) -- ++ (0,-.14) -- ++ (.4,0) -- ++ (0,.14) -- cycle;
  \fill [color=red] (16,3) ++ (0,0.2) ++ (+0.2,+0.07) -- ++ (-0.4,0) -- ++ (0,-.14) -- ++ (.4,0) -- ++ (0,.14) -- cycle;
    
  \draw [color=white] (20,3) -- (20,2);
  \fill [color=white] (9,-.7) ++ (-0.3,0) ++ (+0.2,+0.07) -- ++ (-0.4,0) -- ++ (0,-.14) -- ++ (.4,0) -- ++ (0,.14) -- cycle;
\end{tikzpicture}
}%
\subfigure{
\begin{tikzpicture}[xscale = 0.315,yscale=0.9]
  \draw [thick, double] (0,3) -- (1,2) -- (2,3);
  \draw [thick, double] (5,3) -- (6,2) -- (7,3);
  \draw [thick, double] (11,3) -- (12,2) -- (13,3);
  \draw [thick, double] (16,3) -- (17,2) -- (18,3);
  
  \draw [thick, double] (1,2) -- (3.5,1) -- (6,2);
  \draw [thick, double] (12,2) -- (14.5,1) -- (17,2);
  
  \draw [thick, double] (3.5,1) -- (9,0) -- (14.5,1);
  
  \draw [thick, double] (9,0) -- (9,-.5);

  \fill [color=white]  (1,2) ++ (+0.3,+0.105) -- ++ (-0.6,0) -- ++ (0,-.21) -- ++ (.6,0) -- ++ (0,.21) -- cycle;
  \draw [very thick, color=blue] (1,2)  ++ (+0.3,+0.105) -- ++ (-0.6,0) -- ++ (0,-.21) -- ++ (.6,0) -- ++ (0,.21) -- cycle;
  \fill [color=red] (1,2) ++ (+0.2,+0.07) -- ++ (-0.4,0) -- ++ (0,-.14) -- ++ (.4,0) -- ++ (0,.14) -- cycle;
  \fill [color=pink] (1,2) ++ (-.8,0) --++ (+0.2,+0.07) -- ++ (-0.4,0) -- ++ (0,-.14) -- ++ (.4,0) -- ++ (0,.14) -- cycle;
  \fill [color=pink] (1,2) ++ (-1.4,0) --++ (+0.2,+0.07) -- ++ (-0.4,0) -- ++ (0,-.14) -- ++ (.4,0) -- ++ (0,.14) -- cycle;
  \fill [color=pink] (1,2) ++ (-2.,0) --++ (+0.2,+0.07) -- ++ (-0.4,0) -- ++ (0,-.14) -- ++ (.4,0) -- ++ (0,.14) -- cycle;

  \fill [color=white]  (6,2) ++ (+0.3,+0.105) -- ++ (-0.6,0) -- ++ (0,-.21) -- ++ (.6,0) -- ++ (0,.21) -- cycle;
  \draw [very thick, color=blue] (6,2)  ++ (+0.3,+0.105) -- ++ (-0.6,0) -- ++ (0,-.21) -- ++ (.6,0) -- ++ (0,.21) -- cycle;
  \fill [color=red] (6,2) ++ (+0.2,+0.07) -- ++ (-0.4,0) -- ++ (0,-.14) -- ++ (.4,0) -- ++ (0,.14) -- cycle;
  \fill [color=pink] (6,2) ++ (-.8,0) --++ (+0.2,+0.07) -- ++ (-0.4,0) -- ++ (0,-.14) -- ++ (.4,0) -- ++ (0,.14) -- cycle;

  \fill [color=white]  (12,2) ++ (+0.3,+0.105) -- ++ (-0.6,0) -- ++ (0,-.21) -- ++ (.6,0) -- ++ (0,.21) -- cycle;
  \draw [very thick, color=blue] (12,2)  ++ (+0.3,+0.105) -- ++ (-0.6,0) -- ++ (0,-.21) -- ++ (.6,0) -- ++ (0,.21) -- cycle;

  \fill [color=white]  (17,2) ++ (+0.3,+0.105) -- ++ (-0.6,0) -- ++ (0,-.21) -- ++ (.6,0) -- ++ (0,.21) -- cycle;
  \draw [very thick, color=blue] (17,2)  ++ (+0.3,+0.105) -- ++ (-0.6,0) -- ++ (0,-.21) -- ++ (.6,0) -- ++ (0,.21) -- cycle;
  \fill [color=red] (17,2) ++ (+0.2,+0.07) -- ++ (-0.4,0) -- ++ (0,-.14) -- ++ (.4,0) -- ++ (0,.14) -- cycle;

  \fill [color=white]  (3.5,1) ++ (+0.3,+0.105) -- ++ (-0.6,0) -- ++ (0,-.21) -- ++ (.6,0) -- ++ (0,.21) -- cycle;
  \draw [very thick, color=blue] (3.5,1)  ++ (+0.3,+0.105) -- ++ (-0.6,0) -- ++ (0,-.21) -- ++ (.6,0) -- ++ (0,.21) -- cycle;
  \fill [color=red] (3.5,1) ++ (+0.2,+0.07) -- ++ (-0.4,0) -- ++ (0,-.14) -- ++ (.4,0) -- ++ (0,.14) -- cycle;
  \fill [color=pink] (3.5,1) ++ (-.8,0) --++ (+0.2,+0.07) -- ++ (-0.4,0) -- ++ (0,-.14) -- ++ (.4,0) -- ++ (0,.14) -- cycle;
  \fill [color=pink] (3.5,1) ++ (-1.4,0) --++ (+0.2,+0.07) -- ++ (-0.4,0) -- ++ (0,-.14) -- ++ (.4,0) -- ++ (0,.14) -- cycle;
  \fill [color=pink] (3.5,1) ++ (-2.,0) --++ (+0.2,+0.07) -- ++ (-0.4,0) -- ++ (0,-.14) -- ++ (.4,0) -- ++ (0,.14) -- cycle;

  \fill [color=white]  (14.5,1) ++ (+0.3,+0.105) -- ++ (-0.6,0) -- ++ (0,-.21) -- ++ (.6,0) -- ++ (0,.21) -- cycle;
  \draw [very thick, color=blue] (14.5,1)  ++ (+0.3,+0.105) -- ++ (-0.6,0) -- ++ (0,-.21) -- ++ (.6,0) -- ++ (0,.21) -- cycle;

  \fill [color=white]  (9,0) ++ (+0.3,+0.105) -- ++ (-0.6,0) -- ++ (0,-.21) -- ++ (.6,0) -- ++ (0,.21) -- cycle;
  \draw [very thick, color=blue] (9,0)  ++ (+0.3,+0.105) -- ++ (-0.6,0) -- ++ (0,-.21) -- ++ (.6,0) -- ++ (0,.21) -- cycle;
  \fill [color=red] (9,0) ++ (+0.2,+0.07) -- ++ (-0.4,0) -- ++ (0,-.14) -- ++ (.4,0) -- ++ (0,.14) -- cycle;
  \fill [color=pink] (9,0) ++ (-.8,0) --++ (+0.2,+0.07) -- ++ (-0.4,0) -- ++ (0,-.14) -- ++ (.4,0) -- ++ (0,.14) -- cycle;
  \fill [color=pink] (9,0) ++ (-1.4,0) --++ (+0.2,+0.07) -- ++ (-0.4,0) -- ++ (0,-.14) -- ++ (.4,0) -- ++ (0,.14) -- cycle;
  
  \fill [color=red] (9,-.7) ++ (0.3,0) ++ (+0.2,+0.07) -- ++ (-0.4,0) -- ++ (0,-.14) -- ++ (.4,0) -- ++ (0,.14) -- cycle;
  \fill [color=red] (9,-.7) ++ (-0.3,0) ++ (+0.2,+0.07) -- ++ (-0.4,0) -- ++ (0,-.14) -- ++ (.4,0) -- ++ (0,.14) -- cycle;
  \fill [color=white] (0,3) ++ (0,0.7) ++ (+0.2,+0.07) -- ++ (-0.4,0) -- ++ (0,-.14) -- ++ (.4,0) -- ++ (0,.14) -- cycle;
  \draw [color=white] (-2,3) -- (-2,2);
\end{tikzpicture}
}
\caption{Illustration of the parking scheme on a binary tree. On the left, cars (red squares) are starting from the leaves and each internal node has an empty parking spot. On the right, cars drove down to the root, parking as soon as they could. At each node $u$, the number $X^n(u)$ of cars that passed by $u$ without being able to park is indicated by pink squares.} \label{fig:parking_scheme}
\end{figure}

Observe that the following equation holds for each internal node $u \in \T_n$:
\begin{equation}
  \label{eqn:derridaRetauxDiscretOnTree}
  X^n(u) = \left( X^n(u1) + X^n(u2) - 1\right)_+,
\end{equation}
where $u1$ and $u2$ are the left and right children of $u$ respectively.
As a result, one can observe that for any $0\le k \le n$,  the family $X^n(v)$ for all nodes $v$ at the $k$th generation of $\T_n$ forms a collection of  i.i.d.\@ random variables with common  law $\mu_{n-k}$. In particular $X^n(\varnothing)$ has law $\mu_n$. 

We generalize this definition to any initial law $\mu_0$ on $\R_+$. For any $u \in \mathbb{T}_n$, denote by $\lvert u \rvert$ its generation. As such, $\{u : \lvert u \rvert = n\}$ is exactly the set of leaves of $\mathbb{T}_n$.

\begin{definition}[(Discrete time) Derrida--Retaux tree]\label{def:ddrt}
Let $\mu_0$ be a probability distribution on $\R_+$.
A \textit{DR tree} of height $n$ and initial law $\mu_0$ is a random map $X^n \colon \mathbb{T}_n \to \R_+$, with $\mathbb{T}_n$ being the binary tree of height $n$, which can be constructed as follows:
\begin{itemize}
  \item The values of the leaves $(X^n(u), |u|=n)$ are i.i.d.\@ random variables with law $\mu_0$.
  \item Each internal node value is constructed as $X^n(u) = (X^n(u1) + X^n(u2) - 1)_+$, for any $u \in \mathbb{T}_n$ with $|u|< n$.
\end{itemize}
\end{definition}

Equation \eqref{eqn:derridaRetauxDiscretOnTree} is a max-type recursive equation on trees, that belongs to the class of recursive models analysed in the survey of Aldous and Bandyopadhyay \cite{AB05}.
The parking scheme we describe here has been studied by Goldschmidt and Przykucki \cite{GP16} on critical Poisson Galton--Watson trees conditioned to be large.
Curien and Hénard \cite{CurienHenard} are also studying parking scheme on uniform trees with $n$ nodes.
In these different cases, the key observation is that the models can be analysed through the study of some generating function.
However, parking on critical and supercritical Galton--Watson trees appear to belong to different \emph{universality class}, as their behavior, in particular near criticality is very different.
Namely, the phase transition of the parking scheme on a critical Galton-Watson tree is of the second order in all known examples (\cite{GP16,CurienHenard}), while it is of infinite order on supercritical Galton--Watson trees (see \eqref{derridaconjecture}). 

The DR tree gives a natural method to generate a random variable $X_n$ with law $\mu_n$.
However, this construction does not make easy the consideration of the family $(X_n, n \geq 0)$ as a process.
Indeed, making the cars start from generation $n$ and flow back to the root only allows to construct the first $n$ generations of that process.
We introduce here a self-contained construction of $(X_n, n \geq 0)$ as a time-inhomogeneous discrete-time Markov chain.

\begin{definition}[Derrida--Retaux process]
Let $\mu_0$ be a probability distribution on $\R_+$.
A \textit{DR process} with initial law $\mu_0$ is the process constructed from the following recursion equation
\begin{equation}
  \label{eqn:mckeanVlasovDiscret}
  \forall n \geq 0, \quad
  \addtolength\arraycolsep{-0.125cm}
  \left\{
  \begin{array}{rl}
	X_{n+1} &= \left(X_n + F^{-1}_n(U_{n+1}) - 1\right)_+, \\
    F_n(x) &= \P(X_n \leq x) \text{ for all } x \geq 0.
  \end{array}
  \right.  
\end{equation}
where $(U_n, n \geq 1)$ is an i.i.d.\@ family of uniform random variables, and $X_0$ is an independent random variable with law $\mu_0$.
\end{definition}

Note that the DR process can be seen as a discrete-time version of a solution of a McKean--Vlasov type SDE, see McKean \cite{McK66},  i.e. a Markov process interacting with its distribution.
It follows from an easy recursion that for all $n \in \N$, $X_n$ has law $\mu_n$.

The aim of this article is to define continuous-time versions of the DR model, tree and process.
In particular, we are looking for a process with a density $r_t(x)$ with respect to the Lebesgue measure which could solve \cite[Equation (33)]{DR14}, that we recall here
\begin{equation}
  \label{eqn:equation33}
  \partial_t r = \partial_x r + r \ast r - r.
\end{equation}
This differential equation \eqref{eqn:equation33} plays a key role in the prediction \eqref{derridaconjecture},  and was obtained by Derrida and Retaux as an informal scaling limit of the sequence of measures $(\mu_{\floor{nt}}(n \diff x))$.

\subsection{A continuous-time version of the model, tree and process}

By analogy with the DR process, we introduce a continuous-time version as the following Markov process. Starting from an initial distribution $X_0$, the process drifts downward at speed $1$ until reaching $0$, where it stays put. Additionally, at each atom $t$ of a Poisson point process with intensity $1$, an independent copy of $X_t$ is added to $X_t$. More formally, the process is defined in the following way.
 
\begin{definition}[(Continuous-time) Derrida--Retaux process]
\label{def:drp}
A \textit{DR process} $(X_t)_{t\geq 0}$ is a solution of the McKean--Vlasov type differential equation
\begin{equation}
  \label{eqn:mcKeanRepresentation}
  \addtolength\arraycolsep{-0.125cm}
  \left\{
  \begin{array}{rl}
	X_t & = X_0 - \int_0^t \ind{X_s > 0} \diff s + \int_0^t \int_{[0,1]} F_s^{-1}(u) N(\diff s, \diff u) \\
    F_t(x) & = \P(X_t \leq x), \text{ for all } x \geq 0, \, t\ge 0,
  \end{array}  
  \right.  
\end{equation}
where $N$ is a Poisson point process on $\R_+ \times [0,1]$ with intensity $\diff s \diff u$ and $F_s^{-1}$ is the right-continuous inverse of $F_s$ defined by $F_s^{-1}(u) \coloneqq \inf\{x\ge0: F_s(x) > u\}$.
\end{definition}

Observe that this continuous-time process has the same features as the discrete-time DR process.
It exhibits the similar pattern of loosing mass up to hitting zero and being added independent copies of itself.
Note that, in order to study a continuous-time DR process $Y$ loosing mass at speed $a$ and being added independent copies of $x$ at rate $b$, one only needs to make the space-time change $Y_t =\frac{a}{b}X_{bt}$.

We prove that for any initial random variable $X_0$ on ${\mathbb R}_+$, there is a unique strong solution of \eqref{eqn:mcKeanRepresentation}. Moreover, setting $\mu_t$ for the law of $X_t$, the family $(\mu_t)_{t \geq 0}$ is continuous with respect to the usual topology of the weak convergence. It turns out that the family $(\mu_t)_{t\ge0}$ will be the solution of a partial differential equation similar to \eqref{eqn:equation33}. In that sense, it will be called a DR model, enticing the following definition.
\begin{definition}
\label{def:drm}[(Continuous-time) Derrida--Retaux model]
A \textit{DR model} is a family $(\mu_t)_{t\ge0}$ of weakly continuous probability distributions that are weak solution to the partial differential equation
\begin{equation}
  \label{eqn:pdeDef}
  \partial_t \mu_t 
  = \partial_x (\ind{x > 0} \mu_t) + \mu_t \ast \mu_t - \mu_t, 
\end{equation}
where by ``weak solution'' we mean that  
for any $\mathcal{C}^1$ bounded functions $f$ with bounded derivative and any $t>0$, we have
\begin{align}
  \label{eqn:pdeWeak1}
  & \int_\R f(x) \mu_t(\diff x) - \int_\R f(x) \mu_0(\diff x) \nonumber \\
  &  = \int_0^t\left( - \int_\R (f'(x) \ind{x>0} + f(x)) \mu_s(\diff x) + \int_{\R^2} f(x+y)\mu_s(\diff x)\mu_s(\diff y)\right) \diff s. 
\end{align}
\end{definition}

\begin{remark}
Note that \eqref{eqn:pdeDef} is very similar to  Equation \eqref{eqn:equation33} predicted by physicists. In particular, the behaviors of the PDEs are the same for all $x > 0$. The effect of adding the $\ind{x>0}$ term in \eqref{eqn:pdeDef} is to specify the evolution of the Dirac mass at $0$ separately. Contrarily to what happens in \eqref{eqn:equation33}, this ensures that the total mass of the measures is preserved.
\end{remark}

The partial differential equation \eqref{eqn:pdeDef} might be related to a Smoluchowski-type coagulation equation \cite{Smoluchowski1916} with constant kernel. However, we were not able to find references in the PDE literature to partial differential equations exhibiting a behavior similar to \eqref{eqn:pdeDef}. Nevertheless, it has recently gained focus in the probability literature, for example in the recent article of Lambert and Schertzer \cite{Lam18}, where a PDE of the form
\[
  \partial_t \rho = \partial_x (\psi \rho) + a(t) (\rho \ast \rho - \rho),
\]
with $\psi$ the branching mechanism of a continuous-state branching process, was introduced and studied in detail. 

The family of law $(\mu_t)_{t \geq 0}$ associated to the DR process can also be constructed on a Yule tree with a method similar to the parking scheme. We recall that a Yule tree is a continuous tree in which each branch has i.i.d.\@ exponential length of parameter 1, and the branching is always binary. For $s \geq 0$, we denote by $\calN_s$ the set of individuals alive in the tree at time $s$.

For some fixed $t > 0$, we define a process $X^t = (X_s^t(u), s \in [0,t], u \in \calN_s)$ on a Yule tree truncated at height $t$ by the following \textit{painting scheme}. 
On each leaf $u \in \calN_t$ of the truncated tree, there is a painter with an i.i.d.\@ random amount of paint $X_t^t(u)$, distributed according to the law $\mu_0$. Then, the procedure is completely deterministic. Each painter climbs down the tree, painting the branches of the tree, using a quantity $1$ of paint per unit of branch length. When two painters meet, they put their remaining paint in common. For each $s \in [0,t]$ and $u \in \calN_s$, the quantity $X^t_s(u)$ is the amount of paint left when the painters reach level $s$ in the branch $u$. See Figure \ref{fig:painting_scheme} for an illustration. This leads to the following definition.
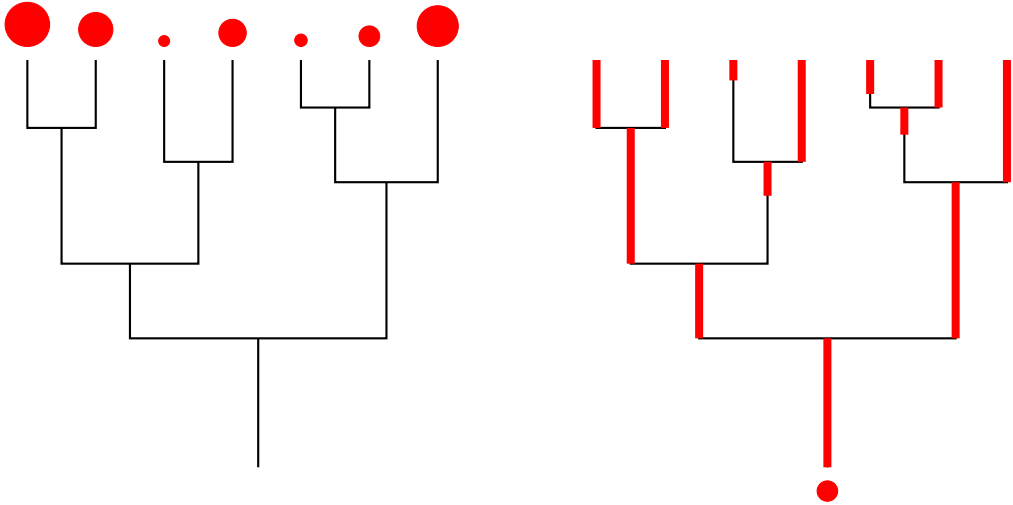
\begin{figure}[ht]
\centering
\subfigure{%
\begin{tikzpicture}[scale=0.9]
  \draw [thick] (0,5) -- (0,4) -- (1,4) -- (1,5);
  \draw [thick] (2,5) -- (2,3.5) -- (3,3.5) -- (3,5);
  \draw [thick] (4,5) -- (4,4.3) -- (5,4.3) -- (5,5);
  \draw [thick] (6,5) -- (6,3.2) -- (4.5,3.2) -- (4.5,4.3);
  \draw [thick] (2.5,3.5) -- (2.5,2) -- (0.5,2) -- (0.5,4);
  \draw [thick] (5.25, 3.2) -- (5.25,0.9) -- (1.5,0.9) -- (1.5,2);
  \draw [thick] (3.375,.9) -- (3.375,-1);
  
  \draw [red,fill=red] (0,5) ++ (0,.525) circle (.325);
  \draw [red,fill=red] (1,5) ++ (0,.45) circle (.25);
  \draw [red,fill=red] (2,5) ++ (0,.28) circle (.08);
  \draw [red,fill=red] (3,5) ++ (0,.4) circle (.2);
  \draw [red, fill=red] (4,5) ++ (0,.291) circle (.091);
  \draw [red, fill=red] (5,5) ++ (0,.351) circle (.151);
  \draw [red, fill=red] (6,5) ++ (0,.5) circle (.3);

  \draw [white,fill=white] (3.375,-1) ++ (0,-.35) circle (.15);
  \draw [white] (7,5) -- (7,4);
\end{tikzpicture}
}%
\subfigure{
\begin{tikzpicture}[scale=0.9]
  \draw [thick] (0,5) -- (0,4) -- (1,4) -- (1,5);
  \draw [thick] (2,5) -- (2,3.5) -- (3,3.5) -- (3,5);
  \draw [thick] (4,5) -- (4,4.3) -- (5,4.3) -- (5,5);
  \draw [thick] (6,5) -- (6,3.2) -- (4.5,3.2) -- (4.5,4.3);
  \draw [thick] (2.5,3.5) -- (2.5,2) -- (0.5,2) -- (0.5,4);
  \draw [thick] (5.25, 3.2) -- (5.25,0.9) -- (1.5,0.9) -- (1.5,2);
  \draw [thick] (3.375,.9) -- (3.375,-1);
  
  \draw [white,fill=white] (0,5) ++ (0,.525) circle (.325);
  \draw [white] (-1,5) -- (-1,4);

  \draw [line width=3,color=red] (0,5) -- (0,4) ++ (1,0) -- (1,5);
  \draw [line width=3,color=red] (2,5) -- ++ (0,-0.3);
  \draw [line width=3,color=red] (3,5) -- ++ (0,-1.5);
  \draw [line width=3,color=red] (2.5,3.5) -- (2.5,3);
  \draw [line width=3,color=red] (4,5) -- ++ (0,-0.5);
  \draw [line width=3,color=red] (5,4.3) -- (5,5);
  \draw [line width=3,color=red] (6,5) -- (6,3.2);
  \draw [line width=3,color=red] (4.5,3.9) -- (4.5,4.3);
  \draw [line width=3,color=red] (0.5,2) -- (0.5,4);
  \draw [line width=3,color=red] (5.25, 3.2) -- (5.25,0.9) ++ (-3.75,0) -- (1.5,2);
  \draw [line width=3,color=red] (3.375,.9) -- (3.375,-1);
  
  \draw [red,fill=red] (3.375,-1) ++ (0,-.35) circle (.15);
\end{tikzpicture}
}
\caption{Illustration of the painting scheme on a Yule tree. On the left, painters are starting on the leaves with a random amount of red paint. On the right, painters climbed down to the root, painting the tree as long as they could. The remaining paint at the root is $X^t_0(\emptyset)$.} \label{fig:painting_scheme}
\end{figure}

\begin{definition}[(Continuous-time) Derrida--Retaux tree]
\label{def:drt}  Let $\mu_0$ be a probability distribution on $\R_+$.
For $t > 0$, a \textit{DR tree} of height $t$ and initial law $\mu_0$ is a random process $X^t = (X_s^t(u), s \in [0,t], u \in \calN_s)$, where $X^t_s(v)$ represents the amount of paint remaining at level $s$ on the branch $v$ in the painting scheme starting with $(X_t^t(u), u \in \calN_t)$ that are i.i.d.\@ with law $\mu_0$ conditionally on the Yule tree.
\end{definition}

Denoting by $\bar{\mu}_t$ the law of $X^t_0(\varnothing)$, we call $(\bar{\mu}_t)_{t \geq 0}$ the \textit{family of laws obtained from the tree-painting scheme} (in particular, $\bar{\mu}_0= \mu_0$).
We observe that by the branching property of the Yule tree, the subtree starting from individual $u$ at time $s$ is a Yule tree of height $t-s$. Therefore for all $s \leq t$, the family $(X^t_s(u), u \in \mathcal{N}_s)$ are i.i.d.\@ random variables with law $\bar{\mu}_{t-s}$.
Contrarily to the two previous definitions, the question of existence and uniqueness of $(\bar{\mu}_t)_{t \geq 0}$ is straightforward in Definition~\ref{def:drt}. We prove later that $(\bar{\mu}_t)_{t \geq 0}$ is a DR model, see Theorem \ref{t:main1} below.

We refer to Section~\ref{subsec:Yule} for a detailed definition of the continuous-time  DR tree. However, it is worth noticing that the  painting scheme can be obtained straightforwardly as a scaling limit, when $K\to\infty$,  of the parking scheme on a Galton--Watson tree of height $Kt$, in which particles give birth to two children with probability $1/K$ and to one child with probability $1-1/K$, and the starting number of cars in a given leaf has law $\floor{KX_0}$, with $X_0$ of law $\mu_0$.

The first main result of the article justifies and unifies the three definitions of the continuous-time model given above.
\begin{theorem}\label{t:main1}
Let $\mu_0$ be a probability distribution on $\R_+$, and $(\bar{\mu}_t)_{t \geq 0}$ the family of laws obtained from the tree-painting scheme.
\begin{enumerate}
  \item[(i)] There exists a unique strong solution $(X_t)_{t\geq 0}$ to  Equation \eqref{eqn:mcKeanRepresentation} with $X_0$ of law~$\mu_0$. Moreover, for all $t \geq 0$, $X_t$ has law $\bar{\mu}_t$.
  \item[(ii)] The family $(\bar{\mu}_t)_{t \geq 0}$ is the unique weak solution of the PDE \eqref{eqn:pdeDef} with initial condition $\mu_0$, therefore a DR model with initial law $\mu_0$. 
\end{enumerate}
\end{theorem}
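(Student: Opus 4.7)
The plan is to take the tree-painting construction of $(\bar\mu_t)$ as the foundation: first derive a renewal equation for $(\bar\mu_t)$ from the branching structure of the Yule tree, then use it to show that $(\bar\mu_t)$ is a weak solution of the PDE \eqref{eqn:pdeDef}. Uniqueness of the weak solution gives (ii). For (i), observe that once $(\bar\mu_t)$ has been identified, the McKean--Vlasov equation becomes an ordinary SDE driven by $N$, which has a unique strong solution, and identify its law with $\bar\mu_t$ via uniqueness of a linearised PDE.

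For the existence part of (ii), condition on the first branching time $\tau_1 \sim \mathrm{Exp}(1)$ of the Yule tree from the root. The branching property identifies the two subtrees above $\tau_1$ as independent Yule trees of height $t - \tau_1$, so for any bounded Borel $f$,
\[
\int f \, \dd\bar\mu_t = \e^{-t} \int f((x-t)_+) \, \mu_0(\dd x) + \int_0^t \e^{-\tau} \iint f\bigl((x+y-\tau)_+\bigr)\, \bar\mu_{t-\tau}(\dd x) \bar\mu_{t-\tau}(\dd y) \, \dd\tau.
\]
Weak continuity of $t \mapsto \bar\mu_t$ is immediate. Differentiating this identity in $t$ (after substituting $s = t-\tau$) produces the convolution term $\iint f(x+y)\bar\mu_t(\dd x)\bar\mu_t(\dd y)$ as boundary contribution at $s = t$, the term $-\int f \, \dd\bar\mu_t$ from differentiating the exponential factor, and a remaining derivative contribution that we recognise, by applying the renewal identity to the bounded Borel function $x \mapsto f'(x) \ind{x>0}$, as $-\int f'(x)\ind{x>0}\, \bar\mu_t(\dd x)$. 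This produces exactly the weak form \eqref{eqn:pdeWeak1}.

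The main technical step is uniqueness. Given two weak solutions $\mu^1, \mu^2$ with common initial datum, $\nu_t := \mu^1_t - \mu^2_t$ satisfies a linear integral equation obtained from \eqref{eqn:pdeWeak1} via the factorisation $\mu^1_t \ast \mu^1_t - \mu^2_t \ast \mu^2_t = \nu_t \ast \mu^1_t + \mu^2_t \ast \nu_t$. Testing against $f \in \calC^1_b$ with $\|f\|_\infty + \|f'\|_\infty \leq 1$, the two convolution terms and the $\int f \, \dd\nu_s$ term are directly bounded by the Fortet--Mourier-type dual norm $\|\nu_s\|_\ast := \sup_f |\int f \, \dd\nu_s|$, using that $\mu^i_s$ are probability measures and that $x \mapsto \int f(x+y)\mu^i_s(\dd y)$ has the same $\calC^1_b$ seminorm as $f$. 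The delicate derivative term $\int f'(x)\ind{x>0} \nu_s(\dd x)$ is not of the form $\int g\,\dd\nu_s$ with $g \in \calC^1_b$ because $\ind{x>0}$ is not smooth; it is handled by mollifying $\ind{x>0}$, integrating by parts, and passing to the limit, which absorbs the boundary contribution at $0$ into the other three terms up to a larger constant. A Gronwall argument then yields $\|\nu_t\|_\ast \equiv 0$. This is the main obstacle and the step requiring the most care.

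For part (i), once $(\bar\mu_t)$ is determined by (ii), substituting $F_s = \bar F_s$ in \eqref{eqn:mcKeanRepresentation} yields an ordinary SDE driven by $N$. Ordering its atoms $(T_i, U_i)$ by $T_i$, the unique strong solution is constructed inductively: between $T_i$ and $T_{i+1}$ the trajectory decreases at unit speed until reaching $0$, then jumps by $\bar F_{T_{i+1}}^{-1}(U_{i+1})$ at $T_{i+1}$. Denote by $\nu_t$ the law of $X_t$ so constructed. Itô's formula applied to $f(X_t)$ for $f \in \calC^1_b$ with bounded derivative, combined with $\int_{[0,1]} g(\bar F_s^{-1}(u))\, \dd u = \int g\, \dd\bar\mu_s$, shows after taking expectation that $\nu_t$ satisfies the linear weak equation
\[
\int f\, \dd\nu_t - \int f\, \dd\mu_0 = \int_0^t\!\left[-\int f'(x) \ind{x>0} \nu_s(\dd x) - \int f \, \dd\nu_s + \iint f(x+y)\, \nu_s(\dd x) \bar\mu_s(\dd y)\right] \dd s.
\]
Both $\nu_t$ and $\bar\mu_t$ solve this linear equation with initial datum $\mu_0$, and a Gronwall argument identical in structure to the uniqueness proof above (and in fact easier, since the equation is now genuinely linear in $\nu_t$) gives $\nu_t = \bar\mu_t$, completing part (i).
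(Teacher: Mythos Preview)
Your overall architecture is reasonable, and the existence halves (renewal equation from the first branching time, then either differentiate it or apply It\^o) are fine and close in spirit to what the paper does. The genuine problem is precisely where you flag it: the uniqueness argument for the PDE via Gronwall in a Fortet--Mourier-type norm.

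The term $\int f'(x)\ind{x>0}\,\nu_s(\dd x)$ cannot be controlled by $\|\nu_s\|_\ast$ in the way you suggest. If you mollify $\ind{x>0}$ by a smooth cutoff $\chi_\epsilon$, then $f'\chi_\epsilon$ has $\calC^1_b$-norm of order $1/\epsilon$ (the derivative of $\chi_\epsilon$ blows up), so testing against it gives a bound $C_\epsilon\|\nu_s\|_\ast$ with $C_\epsilon\to\infty$; the Gronwall loop does not close. Integrating by parts does not help either: it would trade $f'$ for $f$ but produce a boundary term at $0$ involving $\nu_s(\{0\})$, and point masses are not controlled by the bounded-Lipschitz norm. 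Switching to total variation would handle the transport term, but the weak formulation \eqref{eqn:pdeWeak1} only lets you test against $\calC^1_b$ functions, so you cannot recover $\|\nu_s\|_{TV}$ on the left-hand side. In short, the sentence ``absorbs the boundary contribution at $0$ into the other three terms up to a larger constant'' hides the whole difficulty and, as written, does not go through. The same issue recurs in your identification step $\nu_t=\bar\mu_t$ in part (i).

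The paper avoids this entirely by a different route: instead of a stability estimate, it shows that \emph{any} weak solution of \eqref{eqn:pdeDef} satisfies the renewal equation $\mu_t=\int_0^t\e^{-s}\tau_s(\mu_{t-s}\ast\mu_{t-s})\,\dd s+\e^{-t}\tau_t\mu_0$ (this is a direct computation, differentiating $s\mapsto \e^{-(t-s)}\int g((x-(t-s))_+)\,\nu_s(\dd x)$), and then invokes the characterisation of the tree-painting family by that renewal equation. Likewise, for (i) the paper proves uniqueness of the McKean--Vlasov SDE not by a linear-PDE Gronwall but by unfolding any solution recursively into a tree-painting scheme on a Yule tree, which forces $\nu_t=\bar\mu_t$. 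Both of these arguments are soft and sidestep the boundary-at-zero issue; if you want to keep a Gronwall strategy you would need either a norm that simultaneously controls point masses and is accessible from $\calC^1_b$ test functions, or a separate a priori argument that $\nu_s(\{0\})$ is controlled by $\|\nu_s\|_\ast$, neither of which is supplied.
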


Additionally, we obtain preliminary results on the asymptotic behavior of general continuous-time DR models, see Section \ref{s:general}. However, many fundamental  questions remain open for this complex dynamics in the general case.  In the next subsection, we shall present a simple two-parameter family of measures which turns out to be stable by the dynamics \eqref{eqn:pdeDef}. Therefore, by restricting ourselves to this family, we  are able to introduce an  exactly solvable model, which exhibits the researched phase transition at criticality.

\subsection{An exactly solvable model}

Let $(\mu_t)_{t\ge0}$ be a continuous-time DR model. We are particularly interested in an exactly solvable case where  the initial law $\mu_0$ is a mixture of an exponential distribution and a Dirac mass at 0:
\begin{equation}
  \label{eqn:initialDistribution1}
  \mu_0= p \delta_{0}+ (1-p) \lambda \e^{-\lambda x} \1_{\{x>0\}} \diff x, \qquad p\in [0, 1], \lambda>0.
\end{equation}
Denote by $ F_\infty(p, \lambda) $ the associated \textit{free energy}: 
\begin{equation}
  \label{eqn:freeEnergy?}
  F_\infty(p, \lambda) \coloneqq \lim_{t \to \infty} \e^{-t}  \int_0^\infty x \mu_t(\diff x)  .
\end{equation}
To describe the asymptotic behavior of $\mu_t$ with respect to the parameters $(p, \lambda)$ of the initial distribution $\mu_0$, we introduce three sets:
\begin{align*}
 {\mathscr P} & \coloneqq \left\{ (p,\lambda) \in [0,1) \times [0,\infty) : \lambda < 1 \text{ or } (\lambda  > 1 \text{ and } p < \lambda - \lambda \log \lambda) \right\},\\
 {\mathscr C} & \coloneqq \left\{ (p,\lambda) \in [0,1) \times [1,\infty) : p = \lambda - \lambda \log \lambda \right\},\\
  {\mathscr U} & \coloneqq \left\{ (p,\lambda) \in [0,1) \times [1,\infty) : p > \lambda - \lambda \log \lambda \right\},
\end{align*}
which are represented in Figure \ref{fig:phaseDiagramIntro}.  The case $p=1$ is  degenerate  therefore excluded from the set of parameters. 

The following result gives a classification of this DR model in pinned, unpinned and critical behaviors for any starting distribution.
\begin{theorem}
\label{thm:asympBehaviour} Let $(\mu_t)_{t\ge0}$ be the continuous-time DR model with initial distribution $\mu_0$ given by \eqref{eqn:initialDistribution1}. 
\begin{enumerate}
\item[(i)] If $(p ,\lambda) \in {\mathscr P}$, then $F_\infty(p, \lambda)>0$.
\item[(ii)] If $(p ,\lambda) \in {\mathscr U}$, then there exists some positive constant $C$ such that 
\[
\int_0^\infty y \mu_t(\diff y)  \sim C \e^{-(x-1)t}, \qquad \text{as } t \to \infty,
\]
where $x$ is the unique solution larger than 1 of the equation $Hx - x\log x = 1$ and $H := \frac{p}{\lambda} - \log \lambda$.
\item[(iii)] If $(p ,\lambda) \in {\mathscr C}$, then 
\[
\int_0^\infty y \mu_t(\diff y) \sim \frac{2}{t^2}, \qquad \text{as } t\to \infty.
\]
Furthermore, conditionally on $(0, \infty)$, $\mu_t$ converges  weakly  to a standard exponential distribution. 
\end{enumerate}
\end{theorem}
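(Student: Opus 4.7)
The plan is to exploit the fact that the two-parameter family of mixtures $p\delta_0 + (1-p)\lambda e^{-\lambda x}\mathbf{1}_{\{x>0\}}\mathrm{d}x$ is invariant under the dynamics \eqref{eqn:pdeDef}, thereby reducing the PDE to a two-dimensional ODE system that can be analyzed in closed form. First I would substitute this ansatz into the weak formulation \eqref{eqn:pdeWeak1}, using that $\mu_t * \mu_t$ has Dirac coefficient $p_t^2$ at $0$ and density $2p_t(1-p_t)\lambda_t e^{-\lambda_t x} + (1-p_t)^2\lambda_t^2\, x e^{-\lambda_t x}$ on $(0,\infty)$, and that $\partial_x(\mathbf{1}_{\{x>0\}}\mu_t) = (1-p_t)\lambda_t\,\delta_0 - (1-p_t)\lambda_t^2 e^{-\lambda_t x}\mathbf{1}_{\{x>0\}}\mathrm{d}x$ in the distributional sense. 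Matching the Dirac coefficient at $0$, the constant coefficient in the density, and the coefficient of $xe^{-\lambda_t x}$ gives the autonomous ODE system
\[\dot p_t = (1-p_t)(\lambda_t - p_t), \qquad \dot\lambda_t = -(1-p_t)\lambda_t.\]
By the uniqueness part of Theorem~\ref{t:main1}(ii), this mixture is indeed the DR model for all $t \geq 0$.

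Next I would solve the ODE explicitly through the rescaled time $\tau(t) := \int_0^t (1-p_s)\,\mathrm{d}s$. In this variable the system decouples and linearizes: $\mathrm{d}\lambda/\mathrm{d}\tau = -\lambda$ and $\mathrm{d}p/\mathrm{d}\tau = \lambda - p$, yielding $\lambda_\tau = \lambda_0 e^{-\tau}$ and $p_\tau = (p_0 + \lambda_0\tau) e^{-\tau}$. A direct differentiation shows that $H := p_0/\lambda_0 + \log\lambda_0$ is conserved along trajectories (up to an evident sign convention in the theorem statement; the key invariant relation is that at $p=1$ the value $x := \lambda$ satisfies $Hx - x\log x = 1$). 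The map $\tau \mapsto p_\tau$ has a unique critical point at $\tau^\star = 1 - p_0/\lambda_0$, where it attains its maximum $e^{H-1}$; consequently $p_\tau$ reaches the boundary value $1$ in positive $\tau$ if and only if $\lambda_0 > 1$ and $p_0 \geq \lambda_0(1-\log\lambda_0)$, i.e.\ $H \geq 1$, which exactly recovers the geometric classification into $\mathscr P$, $\mathscr C$, $\mathscr U$.

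The three regimes are then analyzed using $\int y\mu_t(\mathrm{d}y) = (1-p_t)/\lambda_t$. For (i), $(p,\lambda)\in\mathscr P$: $p_\tau < 1$ for all $\tau \geq 0$ and $p_\tau \to 0$ exponentially, so $C_\star := \int_0^\infty p_s\,\mathrm{d}s < \infty$ and $\tau(t) = t - C_\star + o(1)$, giving $\lambda_t \sim \lambda_0 e^{C_\star}e^{-t}$ and $\int y\mu_t(\mathrm{d}y) \sim e^t/(\lambda_0 e^{C_\star})$, hence $F_\infty > 0$. For (iii), $(p,\lambda) \in \mathscr C$: $p_\tau$ touches $1$ tangentially at $\tau^\star = \log\lambda_0$ with $\ddot p_{\tau^\star} = -1$, so $1-p_\tau \sim (\tau^\star - \tau)^2/2$; integrating $\mathrm{d}t/\mathrm{d}\tau = 1/(1-p_\tau)$ gives $t \sim 2/(\tau^\star - \tau)$, hence $1-p_t \sim 2/t^2$, and since $\lambda_t \to \lambda_0 e^{-\tau^\star} = 1$ one obtains $\int y\mu_t(\mathrm{d}y) \sim 2/t^2$; the conditional law on $(0,\infty)$ is exponential with parameter $\lambda_t \to 1$, hence converges weakly to the standard exponential. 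For (ii), $(p,\lambda)\in\mathscr U$: $p_\tau$ hits $1$ transversally at some $\tau^{\star\star} \in (0, \log\lambda_0)$ with $\dot p_{\tau^{\star\star}} > 0$; the conservation law at $p=1$ forces $x := \lambda_{\tau^{\star\star}} > 1$ to satisfy $H = 1/x + \log x$, equivalently $Hx - x\log x = 1$, and a one-line computation gives $\dot p_{\tau^{\star\star}} = x - 1$. The linear vanishing $1 - p_\tau \sim (x-1)(\tau^{\star\star}-\tau)$, integrated, yields $\tau^{\star\star} - \tau(t) \sim c\,e^{-(x-1)t}$ and consequently $\int y\mu_t(\mathrm{d}y) = (1-p_t)/\lambda_t \sim C\,e^{-(x-1)t}$.

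The main obstacle is not the algebra — the PDE-to-ODE reduction and the conservation law come out by direct substitution — but rather the careful treatment of the time change near the singular times $\tau^\star$ and $\tau^{\star\star}$. Specifically, one must rigorously justify the asymptotic expansions of $t(\tau)$ from the quadratic vanishing of $1-p_\tau$ in case (iii) and the linear vanishing in case (ii), and identify the correct algebraic characterization of the decay exponent through the level set geometry of the conserved quantity $H$. The conversion between $\tau$- and $t$-time near criticality, where $t\to\infty$ corresponds to $\tau\to\tau^\star$ only algebraically fast, is the most delicate part of the argument.
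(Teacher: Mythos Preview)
Your proposal is correct and takes a genuinely different route from the paper's own argument. Both approaches begin identically: substituting the ansatz $\mu_t = p_t\delta_0 + (1-p_t)\lambda_t e^{-\lambda_t x}\mathbf{1}_{\{x>0\}}\diff x$ into the weak PDE to obtain the planar system $\dot p = (1-p)(\lambda-p)$, $\dot\lambda = -(1-p)\lambda$, and identifying the first integral $H = p/\lambda + \log\lambda$ (your remark about the sign in the theorem statement is well taken; the paper's Corollary~\ref{cor:differentialEquation} uses $+\log\lambda$, so the statement of Theorem~\ref{thm:asympBehaviour} contains a typo). From that point the paper eliminates $p$ via the conservation law, studies the scalar ODE $\lambda' = -\lambda(1 - H\lambda + \lambda\log\lambda)$ directly in $t$-time, and obtains the asymptotics of $\lambda(t)$ in each regime by successive bootstrapping of the expansion of $\lambda'/\lambda$ (Proposition~\ref{prop:asymptotics_of_p_and_lambda}). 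You instead introduce the intrinsic time $\tau(t) = \int_0^t (1-p_s)\,\diff s$, under which the system becomes linear and is solved in closed form as $\lambda_\tau = \lambda_0 e^{-\tau}$, $p_\tau = (p_0 + \lambda_0\tau)e^{-\tau}$; the phase classification then reduces to whether the explicit function $p_\tau$ reaches $1$, and the three asymptotic behaviours of $\int y\,\mu_t(\diff y) = (1-p_t)/\lambda_t$ come from the order of vanishing of $1-p_\tau$ at the first hitting time of $1$ (none, quadratic, linear), converted back to $t$-time by inverting $\diff t/\diff\tau = 1/(1-p_\tau)$. Your approach makes the geometry of the phase diagram completely transparent and concentrates all the analytic work into a single step (the time-change inversion near the singularity), whereas the paper's direct approach avoids the time-change machinery at the cost of repeating a two-step bootstrapping argument in each regime; both are rigorous, and yours is arguably the more conceptual of the two.
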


For the discrete-time DR model, the counterpart of (iii) was conjectured in Chen et al.\@ \cite{CDHLS17}. 

\begin{figure}[ht]
\centering
\begin{tikzpicture}[scale=2.5]
  \fill [color=green!50] (0,0) -- (0,1) -- (1,1) -- plot[domain = 1:2.71] (\x,{\x - \x*ln(\x)}) -- (2.71,0) -- cycle;
  \fill [color =blue!50] (1,1) -- plot[domain = 1:2.71] (\x,{\x - \x*ln(\x)}) -- (2.71,0) -- (4,0) -- (4,1) -- cycle;

  \draw [color=red, very thick, domain = 1: 2.71828, samples=200] plot (\x,{\x-\x*ln(\x)}); 

  \draw [->,>=latex] (-0.1,0) -- (4.2,0) node[below] {$\lambda$};
  \draw (0,0) node[below left] {0};
  \draw [->,>=latex] (0,-0.1) -- (0,1.2) node[left] {$p$};
  \draw [very thick ,color=orange] (1,1) -- (4,1);
  \draw [color=orange] (0,0) node {$\bullet$};
  
  \draw (0.03,1) -- (-0.03,1) node[left]{$1$};
  \draw (1,0.03) -- (1,-0.03) node[below]{$1$};
  \draw (2.71828,0.03) -- (2.71828,-0.03) node[below]{$\e$};
  
  \draw (1, 0.5) node {$\mathscr{P}$};
  
  \draw[color=red] (1.96,0.5) node {$\mathscr{C}$};
  
  \draw (3.3,0.5) node{$\mathscr{U}$};
\end{tikzpicture}
\caption{The pinned and unpinned regions of the solvable DR model}
\label{fig:phaseDiagramIntro}
\end{figure}
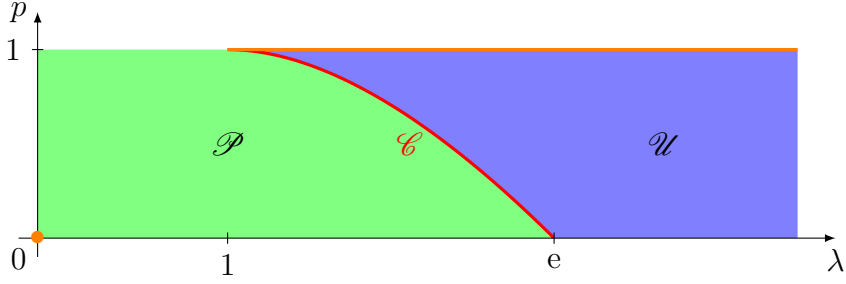

Let us call ${\mathscr P}$, ${\mathscr U}$ and ${\mathscr C}$ respectively the \textit{supercritical}, \textit{subcritical} and \textit{critical} zones of the dynamics. Note that the critical zone ${\mathscr C}$ is the graph of the function $\lambda \mapsto \lambda - \lambda \log \lambda$ on $(1,\infty)$, which splits the domain $[0,1) \times \R_+$ into the supercritical and subcritical zones. In the supercritical zone, the dynamics is such that the measure $\mu_t$ is attracted by the stable equilibrium $(0,0)$ (in orange in Figure \ref{fig:phaseDiagramIntro}). In the critical zone, $\mu_t$ is attracted by the critical point $(1,1)$ (in red in Figure \ref{fig:phaseDiagramIntro}). Finally, in the subcritical zone, $\mu_t$ converges toward a point on the orange lane, with $p=1$ and $\lambda > 1$. In other words, $\mu_t$ converges toward $\delta_0$ in probability if $(p,\lambda) \in \mathscr{U} \cup \mathscr{C}$.

It is of interest to  give  the precise behavior of $p \mapsto F_\infty(p,\lambda)$ as the function hits $0$, whenever that point exists. Note that for $\lambda > e$, one has $F_\infty(p,\lambda)= 0$ for all $p \in (0,1)$, therefore this behavior can be observed only for $\lambda < e$.
\begin{theorem} \label{thm:DRConjectureSimple} 
Fix some $\lambda \in (0,e)$ and let $p \in (0,1)$ vary.
\begin{enumerate}
\item[(i)]  If $\lambda \in (1,e)$, setting $p_c = \lambda - \lambda \log \lambda$, there exists $C>0$ such that
\[
F_\infty(p,\lambda) 
\sim \ind{p < p_c} C \exp\left( - \pi \sqrt{2\lambda} (p_c - p)^{-1/2} \right) 
\quad \text{as } p \uparrow p_c.
\]
\item[(ii)]  If $\lambda =1$, we have $F_\infty(1,1) =0$ and there exists $C>0$ such that
\[
F_\infty(p,\lambda) 
\sim C (1-p)^{2/3} 
\exp\left( -\frac{\pi}{\sqrt{2}} (1-p)^{-1/2} \right) 
\quad \text{as } p \uparrow  1.
\]
\item[(iii)]  If $\lambda \in (0,1)$, we have $F_\infty(1,\lambda) = 0$ and there exists $C>0$ such that
\[
F_\infty(p,\lambda) 
\sim C (1-p)^{1/(1-\lambda)} 
\quad \text{as } p \uparrow  1.
\]
\end{enumerate}
\end{theorem}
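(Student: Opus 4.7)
The plan is to reduce Theorem~\ref{thm:DRConjectureSimple} to a precise asymptotic analysis of a one-dimensional integral. First, plugging the ansatz $\mu_t = p_t\delta_0 + (1-p_t)\lambda_t\mathrm{e}^{-\lambda_t x}\mathbf{1}_{\{x>0\}}\mathrm{d}x$ into the PDE~\eqref{eqn:pdeDef} and matching the coefficients of $\mathrm{e}^{-\lambda_t x}$ and $x\mathrm{e}^{-\lambda_t x}$ shows that this two-parameter family is invariant under the dynamics, with
\[
\dot{p}_t = (1-p_t)(\lambda_t - p_t),\qquad \dot{\lambda}_t = -(1-p_t)\lambda_t.
\]
A direct computation gives the conservation law $H := p/\lambda + \log\lambda$, and the critical curve $\mathscr{C}$ coincides exactly with the level set $\{H = 1\}$.

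Next, throughout $\mathscr{P}$, I would derive a closed-form expression for the free energy. Integrating $\dot\lambda_t = -(1-p_t)\lambda_t$ gives $\lambda_t = \lambda\,\mathrm{e}^{-t}\exp(P_t)$ with $P_t := \int_0^t p_s\,\mathrm{d}s$. Since $m_t := \int x\,\mu_t(\mathrm{d}x) = (1-p_t)/\lambda_t \sim 1/\lambda_t$ in $\mathscr{P}$ (where $p_t \to 0$), one obtains $F_\infty(p,\lambda) = \mathrm{e}^{-P_\infty}/\lambda$ with $P_\infty := \int_0^\infty p_s\,\mathrm{d}s < \infty$. Changing variables via $\mathrm{d}s = -\mathrm{d}u/(g(u)u)$ where $g(u) := 1 - Hu + u\log u$, and using $p(u) = u(H - \log u)$ together with the identity $H - \log u = 1 - g'(u)$, one gets
\[
P_\infty = \int_0^\lambda \frac{\mathrm{d}u}{g(u)} - \log(1-p), \qquad \text{hence}\qquad F_\infty(p,\lambda) = \frac{1-p}{\lambda}\exp\left(-\int_0^\lambda \frac{\mathrm{d}u}{g(u)}\right).
\]

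Cases (i) and (iii) then follow from singularity analysis of this integral. In case (i), setting $h := 1-H = (p_c-p)/\lambda \downarrow 0$, a Taylor expansion around $u = 1$ gives $g(1+\delta) = h(1+\delta) + \delta^2/2 + O(\delta^3)$, so $1/g$ is peaked on $|\delta| \lesssim \sqrt{h}$. The evaluation $\int_{\mathbb R}\mathrm{d}\delta/(h+\delta^2/2) = \pi\sqrt{2/h}$, combined with the bounded contribution of $1/g$ away from $u = 1$ and the prefactor $(1-p)/\lambda$, yields $F_\infty \sim C\exp(-\pi\sqrt{2\lambda}\,(p_c-p)^{-1/2})$. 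In case (iii), the zero of $g$ instead sits at the endpoint $u = \lambda$: as $p \uparrow 1$, $g(\lambda) = 1-p \downarrow 0$ while $g'(\lambda) \to 1 - 1/\lambda \neq 0$, producing a logarithmic divergence $\int_0^\lambda \mathrm{d}u/g(u) \sim -\frac{\lambda}{1-\lambda}\log(1-p)$; multiplying by $(1-p)/\lambda$ gives $F_\infty \sim C(1-p)^{1/(1-\lambda)}$.

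Case (ii) is the main obstacle, since the $(1-p)^{2/3}$ prefactor is subleading and invisible to a crude analysis. Here $\lambda = 1$ places the trajectory exactly at the critical fixed point, halving the peak of $1/g$. After the scaling $u = 1 - \sqrt{h}\,v$ with $h = 1-p$ and the refined expansion $g(1-\sqrt h\,v) = h(1 + v^2/2) + h^{3/2}(v^3/6 - v) + O(h^2)$, one must carry the computation one order further, expanding
\[
\frac{\sqrt h}{g(1-\sqrt h v)} = \frac{1}{\sqrt h\,(1+v^2/2)} - \frac{v^3/6 - v}{(1+v^2/2)^2} + O(\sqrt h).
\]
The antiderivative identity $\int (v - v^3/6)/(1+v^2/2)^2\,\mathrm{d}v = -\tfrac{4/3}{1+v^2/2} - \tfrac{1}{3}\log(1+v^2/2)$, evaluated at the cutoff $v = 1/\sqrt h$, contributes a logarithmic term, giving
\[
\int_0^1 \frac{\mathrm{d}u}{g(u)} = \frac{\pi}{\sqrt{2h}} + \frac{1}{3}\log h + O(1).
\]
The $\tfrac{1}{3}\log h$ produces an $h^{-1/3}$ factor in $\exp(-\int)$ which, combined with the prefactor $h$, yields $F_\infty \sim C(1-p)^{2/3}\exp(-\pi/\sqrt{2(1-p)})$. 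The technical hurdle is to justify uniform control of the $O(\sqrt h)$ remainder on $v \in [0, 1/\sqrt h]$ and to absorb the $O(1)$ residual into the constant $C$.
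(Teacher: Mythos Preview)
Your approach is essentially the paper's: both reduce $F_\infty$ to an explicit integral via the conserved quantity $H=p/\lambda+\log\lambda$ and then analyze the singularity of the integrand as $H\to H_c$. Your closed form $F_\infty=\tfrac{1-p}{\lambda}\exp\bigl(-\int_0^\lambda\!du/g(u)\bigr)$ is a neat simplification of the paper's expression, which carries the numerator $H-\log u$; your identity $H-\log u=1-g'(u)$ integrates the $g'/g$ piece to $-\log g(\lambda)=-\log(1-p)$, making the prefactor explicit. For case~(ii) the paper replaces $g$ by its cubic Taylor polynomial at $u=1$ and does an exact partial-fraction decomposition, reading off the $\tfrac{2}{3}\log\delta$ from the imaginary root; you instead scale $u=1-\sqrt{h}\,v$ and expand. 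Both reach the same answer, but the paper's route makes the rigorous justification cleaner: one shows by dominated convergence on all of $[0,\lambda]$ that $1/g$ minus the rational approximant has a finite limit, whereas your $O(\sqrt{h})$ remainder is not uniform up to $v=1/\sqrt{h}$, so to make your argument watertight you would split $\int_0^1=\int_0^{1-\epsilon}+\int_{1-\epsilon}^1$ and handle the outer piece separately (this is the ``technical hurdle'' you correctly flag). Cases~(i) and~(iii) are handled the same way in both.
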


We may observe the difference of a factor $2$ in the exponents of the right-hand sides in cases (i) and (ii).
Moreover, (i) and (ii) prove the DR conjecture for our model, with enough precision to see the polynomial factor that appears in case (ii).
On the other hand, part (iii) gives a continuous version of the polynomial decay obtained in by Hu and Shi \cite{HuS17}.

We shall take a particular interest in  the  DR tree in the critical regime, conditioned on survival. It is believed that the conditioned tree  limit  is an universal characteristic, and would appear in numerous models related to DR at criticality.  
For $t>0$, let $X^t$ denote the continuous-time DR tree with initial distribution $\mu_0$ given by \eqref{eqn:initialDistribution1} (see Definition \ref{def:drt}). 
Conditionally on the event $\{X_0^t(\varnothing)>0\}$, 
let $\mathfrak{X}^t$ be the restriction of the process $X^t$ to the painted connected component of the root $\varnothing$. 
Assuming that the paint is red, the tree $ \mathfrak{X}^t $ is called the \textit{red tree}, see Section \ref{s:criticalDR} for a more precise definition and properties.

\begin{theorem}
\label{t:critical} Assume $(p, \lambda) \in {\mathscr C}$.
\begin{enumerate}
\item[(i)] The rescaled process $(\frac1{t} \mathfrak{X}^t_{s t})_{0\le s < 1}$ converges locally in law as $t \to \infty$ toward a time-inhomogeneous branching Markov process $(\mathfrak{X}_s)_{0\le s < 1} $.
\item[(ii)] For each $t \geq 0$, let $N_t$ denote the number of leaves in $\mathfrak{X}^t$. There exists a positive constant $C$ such that $N_t/t^2$ converges in law to $C \int_0^1 r^2(s) \diff s$ as $t \to \infty$, where $(r(s))_{0\le s\le 1}$ denotes a $4$-dimensional Bessel bridge. 
\end{enumerate}
\end{theorem}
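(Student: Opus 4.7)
The plan is to analyze the red tree $\mathfrak X^t$ forward in depth from the root $\varnothing$. On each branch the paint $X^t_s(u)$ evolves deterministically at unit rate between Yule branching events; at a Yule event at level $s^*$ from a parent with paint $X$, the two child paints are independent copies of $\bar\mu_{t-s^*}$ conditioned on summing to $X$, and both lie in the red tree exactly when both are positive. The first step is to transform the critical asymptotics $\bar\mu_r = (1-q_r)\delta_0 + q_r \, \mathrm{Exp}(1) + o(q_r)$ with $q_r := \bar\mu_r((0,\infty)) \sim 2/r^2$ from Theorem~\ref{thm:asympBehaviour}(iii) into explicit rates: a direct computation gives that the probability of two red children at a Yule event is $q_r X/(2(1-q_r) + q_r X)$, and conditionally on this event the paint splits uniformly on $\{w_1 + w_2 = X,\ w_1, w_2 > 0\}$. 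After the rescaling $y = x/t$, $\sigma = s/t$, and noting that Yule events accumulate at rate $t$ per particle in $\sigma$-time, the effective red-tree branching rate is $y/(1-\sigma)^2$ per particle, uniformly on $(y,\sigma)$-compacts with $\sigma < 1$.

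This identifies the candidate limit $\mathfrak X$ as a time-inhomogeneous branching Markov process in which each particle carries a nonnegative paint, the paint grows at unit rate, each particle branches at rate $y/(1-\sigma)^2$, and upon branching splits its paint as $(yU, y(1-U))$ for $U$ uniform on $[0,1]$. Part (i) is then obtained by standard convergence of generators (Ethier--Kurtz) on compact $(y, \sigma)$-sets with $\sigma$ bounded away from $1$, together with tightness supplied by exponential-moment bounds on $\bar\mu_r$ available from the exact solvability of the model.

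For part (ii), writing $\Xi_\sigma := \sum_i y_i(\sigma)$ and $N_\sigma$ for the total paint and the number of particles in the limit $\mathfrak X$, this pair satisfies $\Xi_\sigma' = N_\sigma$ in $\sigma$ and $N$ jumps by $1$ at rate $\Xi_\sigma/(1-\sigma)^2$. The linearised expected values have fundamental solutions $(1-\sigma)^{(1\pm\sqrt 5)/2}$, so $\E[N_\sigma]$ blows up as $(1-\sigma)^{-(1+\sqrt 5)/2}$ near $\sigma = 1$ and the expected number of branchings in the limit is non-integrable there. The correct $t^2$-scaling of $N_t$ thus cannot be read off from (i) directly; the proposal is to exploit the exact solvability. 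The joint Laplace transform $\E[e^{-\theta N_t}\mathbf 1_{\{X^t_0(\varnothing) > 0\}}]$ satisfies a closed ODE derived from the Yule branching decomposition, and under the critical scaling $\theta = \eta/t^2$, $t \to \infty$, this ODE should converge to the Laplace transform of $C\int_0^1 r^2(s)\, ds$ for $r$ a $4$-dimensional Bessel bridge (a known identity, via the Pitman--Yor formula for squared Bessel functionals).

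The main difficulty lies in the identification of the $4$-dimensional Bessel bridge limit: matching the critical-scaling limit of the ODE for $\E[e^{-\theta N_t}\mathbf 1_{\{\cdot\}}]$ against the transform of $\int_0^1 r^2 ds$ requires careful manipulation of the explicit formulas of the critical model. A secondary technical obstacle is the control of the boundary layer $\sigma \to 1^-$, where the approximation $P(\text{both red}) \sim q_r X/2$ breaks down (when $q_r X$ is of order $1$ or larger), and the finite-$t$ dynamics must be matched against the bulk limit (i) in order to account for the $t^2$-scaling of the leaf count.
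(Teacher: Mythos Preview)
Your strategy for (i)---identify the red tree as a branching Markov process by computing, at each Yule event, the conditional probability that both children are positive---is the right idea and is essentially what the paper does via Proposition~\ref{prop:law_of_the_red_tree}. But your rate is off by a factor of~$2$: the correct limiting branching rate is $2y/(1-\sigma)^2$, not $y/(1-\sigma)^2$. The error is in the step ``Yule events accumulate at rate $t$ per particle in $\sigma$-time''. Once you condition on $X^t_0(\varnothing)=x>0$, the Yule tree is no longer a rate-$1$ Yule tree: knowing that a small positive mass reaches the root biases each lineage toward having \emph{more} Yule splits (since most of them produce one zero child and leave the red mass unchanged). A short computation with the exponential mixture shows that, for large $t-s$, the conditional rate of Yule events along a red branch is asymptotically~$2$, not~$1$; multiplying by your (correct) per-event probability $\tfrac12 q_{t-s}\lambda_{t-s}X$ gives the paper's rate $\rho(t-s)X$. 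The paper handles this carefully by summing explicitly over the generation of the first \emph{genuine} red split (equation \eqref{qm} in the proof of Proposition~\ref{prop:law_of_the_red_tree}), which absorbs all the ``dud'' Yule events and yields the closed-form survival probability $\exp(-\int_0^s \rho(t-r)(x+r)\,\diff r)$. Your downstream claim that the linearised moments have exponents $(1\pm\sqrt 5)/2$ is a symptom of the same error: with the correct factor $2$, the equation $\xi''=2\xi/(1-\sigma)^2$ has exponents $\{-1,2\}$, consistent with the paper's Lemma~\ref{lem:function_a}.

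For (ii), your outline---derive an ODE for the Laplace transform from the branching decomposition, rescale, and match against the Pitman--Yor formula for $\int_0^1 r^2$---is exactly the paper's route (Lemmas~\ref{lem:function_phi}--\ref{lem:mu_asymptotic_behavior} and \cite{Yor1992}). What you are missing is the concrete content: the paper shows $\phi(t,x)=\exp(-\Theta(t)-x\Theta'(t))$ with $\Theta''=\rho(1-\e^{-\Theta})$, linearises to $a''=\rho a$, proves $a(t)\sim ct^2$ by explicit comparison with $a_0''=2a_0/t^2$, and then controls the nonlinear remainder uniformly as the initial data shrink like $\varepsilon=1/t^2$. Your boundary-layer concern (matching near $\sigma\to 1$) is precisely what Lemma~\ref{lem:mu_asymptotic_behavior} addresses by solving the limiting ODE $z''=\tfrac{2}{s^2}(1-\e^{-z})$ explicitly in terms of $\sinh$, which is where the Bessel-bridge Laplace transform appears.
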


The limiting process $\mathfrak{X}$ is a continuous-time system of particles with masses, that grow continuously and split at random times. As such, it can be called a time-inhomogeneous Markovian growth-fragmentation process. It can be described as satisfying the following properties:
\begin{enumerate}
  \item It starts at time $0$ with a unique particle of mass $0$.
  \item The mass associated to each particle grows linearly at speed $1$.
  \item A particle of mass $m$ at time $s$ splits at rate $2m/(1-s)^2$ into two children, the mass $m$ being split uniformly between the two children.
  \item Particles behave independently after their splitting time.
\end{enumerate}
The local convergence in the statement (i) can be seen equivalently as the convergence of the particle system on the interval $[0,1-\epsilon]$ for all $\epsilon > 0$, or the joint convergence in law of the splitting time of the individuals in the $n$ first generations of the process for all $n \geq 1$. 
Indeed, for fixed $\epsilon> 0$, the total number of particles in $\mathfrak{X}^t$ at height $(1-\epsilon)t$ remains tight as $t \to \infty$. Conversely, the number of particles at height $t$ explodes as $t \to \infty$ as shown in statement (ii); see Figure \ref{fig:red_tree}.
We refer to Theorems \ref{th:scaling_limit_of_the_red_tree} and \ref{th:total_number_mass_of_red_leaves} respectively for a more refined statement of (i) and (ii), respectively. 
\begin{figure}[ht]
\centering
\includegraphics{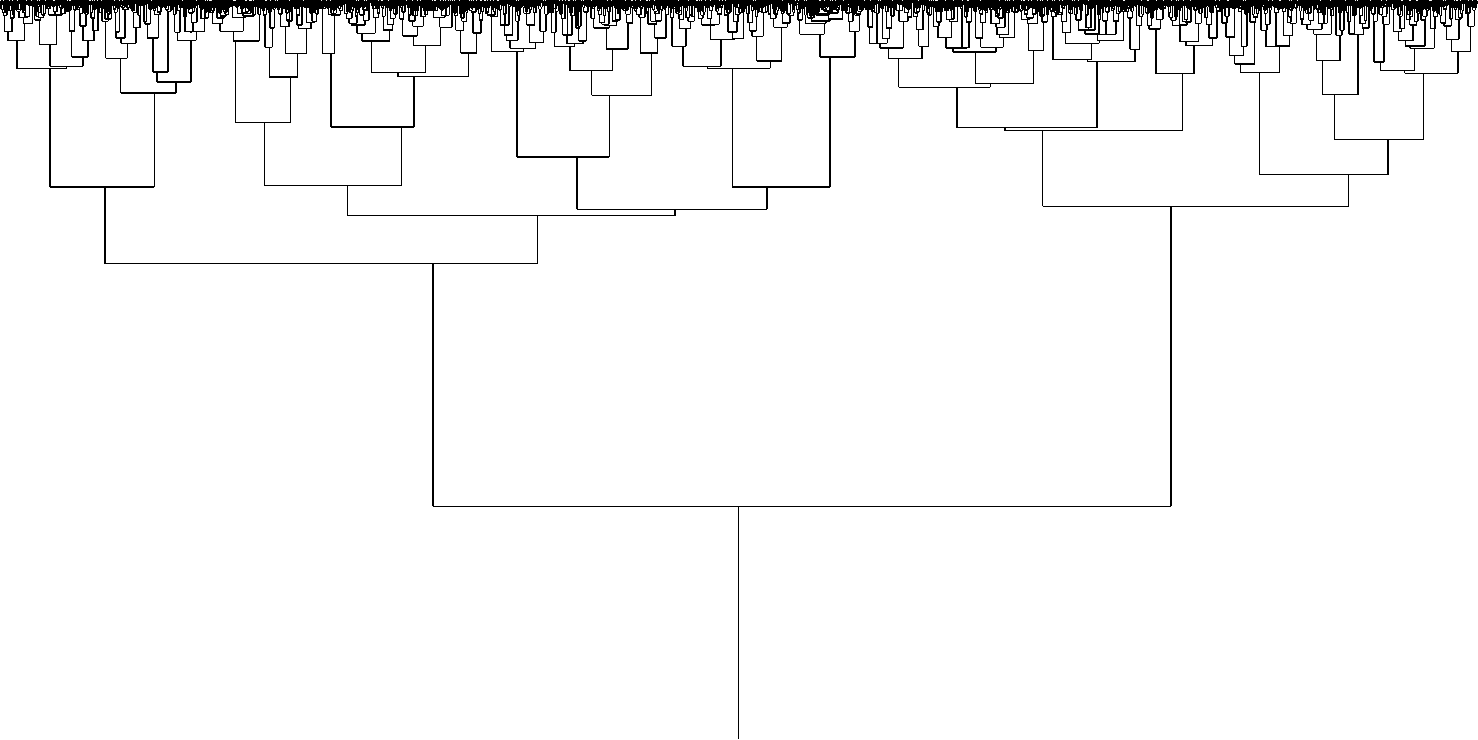}
\caption{Realization of the red tree $\mathfrak{X}^t$ with $t = 200$ and initial condition $(p,\lambda) = (0,e)$.}
\label{fig:red_tree}
\end{figure}

\begin{remark}
The counterpart of Theorem \ref{t:critical}(ii) for the discrete DR model is predicted to hold by B. Derrida (personal communication).
\end{remark}

\subsection{Organization of the paper}

The rest of the article is organised as follows. In Section \ref{sec:defmodel}, we define properly the three constructions of the continuous version of the DR model. These constructions turn out to be equivalent, as shown by Propositions \ref{prop:defMcKean} and \ref{prop:defedp}. In particular, Theorem~\ref{t:main1} follows.

In Section \ref{s:casexponentiel} we study the exactly solvable model in which the evolution is restricted to the class of mixtures of exponential distributions and Dirac masses at $0$.  We give at first the phase diagram of the model (Proposition \ref{thm:asympBehaviourIntro}), then analyze the asymptotic of  a system of differential equations (Propositon \ref{prop:asymptotics_of_p_and_lambda}), and prove Theorems \ref{thm:asympBehaviour} and \ref{thm:DRConjectureSimple} in Sections \ref{subsec:asymptoticBehaviourSystDiff} and \ref{subsec:proofDR} respectively. 

Sections \ref{s:criticalDR} and \ref{s:nombre} are devoted to the study of the critical regime of our model. Conditioned on survival, the DR tree at any fixed height is a time-inhomogeneous branching Markov process (Proposition \ref{prop:law_of_the_red_tree}). By letting the height go to infinity, the scaling limit of the conditioned tree exists (Theorem \ref{th:scaling_limit_of_the_red_tree})  which gives the part~(i) in Theorem \ref{t:critical}. In Section \ref{s:nombre}, we are interested in some quantitative characteristics of the  conditioned  (red) tree, the number $N_t$ and mass  $M_t$ of  leaves of the red tree of height~$t$.  We prove in Lemma \ref{lem:function_phi} that the joint Laplace transform of  $(N_t, M_t)$ is given by the unique solution of a differential equation whose asymptotics are given in Section~\ref{ssub:laplace}. This allows us to obtain  the convergence in law of $(N_t/t^2, M_t/t^2)$ (Theorem \ref{th:total_number_mass_of_red_leaves}) as $t\to\infty$. The part~(ii) in Theorem \ref{t:critical} is a particular case  of Theorem \ref{th:total_number_mass_of_red_leaves}.

In Section \ref{s:general}, we give some  results on the continuous--time DR model with a general initial distribution on $\R_+$.  In particular, we prove that $\e^{-t} X_t$ converges in law to an exponential distribution in the pinned case (Proposition \ref{p:convergenceexponentielle}) and give some necessary conditions for the model to being unpinned (Proposition \ref{p:conditionnessaire}).  We end Section~\ref{s:general} by presenting some open questions.

\section{Constructions of the continuous-time Derrida--Re-taux model}
\label{sec:defmodel}

In this section, we first give a precise definition of the DR tree and a characterization of the family of laws obtained from the tree-painting scheme.
Then, we show that the DR process is well-defined as a solution of the ``McKean-Vlasov type'' stochastic differential equation \eqref{eqn:mcKeanRepresentation} and that its one-dimensional marginal distributions can be obtained from the tree-painting scheme.
Finally, we prove that the DR model is well-defined as the solution of the partial differential equation \eqref{eqn:pdeDef}: existence follows from It\^o formula applied to the DR process and uniqueness from the characterization of the family of laws obtained from the tree-painting scheme.
This shows that these constructions are equivalent, as stated in Theorem~\ref{t:main1}.

\subsection{Construction from a Yule tree: the Derrida--Retaux tree}
\label{subsec:Yule}

We recall that a \textit{Yule tree} is the genealogical tree of a simple population, in which every individual splits after an independent exponential time of parameter one into two children. This tree can be constructed straightforwardly using the standard Ulam-Harris-Neveu notation that we now recall. We denote by $\T = \bigcup_{n \in \Z_+} \{1,2\}^n$ the space of all finite words on $\{1,2\}$, where by convention $\{\varnothing\} = \{1,2\}^0$. The element $u = (u(1),\dots,u(n)) \in \T$ represents the $u(n)$th child of the $u(n-1)$th child of the\dots of the $u(1)$th child of the root~$\varnothing$. We denote by $\lvert u \rvert = n$ the generation to which $u$ belongs, and for $k \leq n$ by $u_k = (u(1),\ldots, u(k))$ its ancestor alive at generation~$k$. We set $u1$ and $u2$ the labels of the two children of $u$, obtained by adding $1$ or $2$ to the end of the word $u$.

The Yule tree is constructed as follows. We denote by $\{e_u, u \in \T\}$ a family of i.i.d.\@ exponential random variables, that represent the time that individual $u$ waits until splitting into its two children $u1$ and $u2$. We then define recursively the birth time $b_u$ of each individual $u \in \T$ by
\[
  b_u = \sum_{k < \lvert u \rvert} e_{u_k}, \quad \text{with } b_\varnothing = 0.
\]
We also set $d_u = b_u + e_u$ the time at which the individual $u$ splits into its children, and $\mathcal{N}_t = \{ u \in \T : b_u \leq t < d_u\}$ the set of particles alive at time $t$.

We now give the detailed definition of the continuous-time DR tree, by analogy with its definition for Galton--Watson trees. Let $\mu_0$ be a probability distribution on $\R_+$ and $t>0$. 
Given the Yule tree, we denote by $(X^t_t(u), u \in \mathcal{N}_t)$ i.i.d.\@ random variables with law $\mu_0$. 
For $u \in \calN_t$, we define $X^t_s(u)=(X^t_t(u)-(t-s))_+$ for each $s \in [b_u,t]$. 
Then, we proceed recursively from the leaves to the root by setting, for any $v \in \T$ such that $d_v \leq t$ and any $s \in [b_v,d_v)$,
\begin{equation}\label{Xtsv}
  X^t_s(v) = (X^t_{b_{v1}}(v1)+X^t_{b_{v2}}(v2) - (d_v - s))_+,
\end{equation}
noting that $b_{v1} = b_{v2} = d_v$ (see Figure \ref{fig:painting_procedure}). The process $X^t=(X^t_s(u), s \in [0,t], u \in \mathcal{N}_s)$ is \textit{a continuous-time DR tree} of height $t$ and initial law $\mu_0$, introduced in Definition~\ref{def:drt}.
Denoting by $\mu_t$ the law of $X^t_0(\varnothing)$, the family $(\mu_t)_{t \geq 0}$ is called \textit{the family of laws obtained from the tree-painting scheme}.

\begin{figure}[!t]
\centering
\begin{tikzpicture}[scale=0.5]
\draw[->,>=latex] (0,-1) -- (0,13);
\draw (0.1,0) -- (-0.1,0) node[left]{$0$};
\draw (0.1,12) -- (-0.1,12) node[left]{$t$};
\draw[dashed] (-0.1,12) -- (17.1,12);
\draw[->,>=latex] (17,13) -- (17,-1);

\node[text width=2.5cm,text centered] at (-4,11.5) 
	{\small orientation of time for the Yule tree};
\node[text width=2.5cm,text centered] at (19.5,1) 
	{\small orientation of time for the painting procedure};

\draw[thick] (3,12) -- (3,7) -- (6,7) -- (6,12);
\draw (3,9.5) node[left]{$v1$};
\draw (6,9.5) node[left]{$v2$};
\draw[thick] (9,12) -- (9,8) -- (13.5,8) -- (13.5,10);
\draw[thick] (12,12) -- (12,10) -- (15,10) -- (15,12);
\draw[thick] (4.5,7) -- (4.5,2) -- (11.25,2) -- (11.25,8);
\draw (4.5,4) node[left]{$v$};
\draw[thick] (7.875,2) -- (7.875,0);

\draw[red] (12,12) node{$\bullet$};
\draw[red] (12,12) node[above]{$X^t_t(u)$};
\draw[red] (6,7) node{$\bullet$};
\draw[red] (6,7) node[right]{$X^t_{b_{v2}}(v2)$};
\draw[red] (3,7) node{$\bullet$};
\draw[red] (3-0.3,7) node[below]{$X^t_{b_{v1}}(v1)$};
\draw[red] (4.5,5) node{$\bullet$};
\draw[red] (4.5,5) node[right]{$X^t_s(v)$};

\draw[dashed] (-0.1,7) -- (4.5,7);
\draw[dashed] (-0.1,7) node[left]{$d_v = b_{v1} = b_{v2}$};
\draw[dashed] (-0.1,2) -- (4.5,2);
\draw[dashed] (-0.1,2) node[left]{$b_v$};
\draw[dashed] (-0.1,5) -- (4.5,5);
\draw[dashed] (-0.1,5) node[left]{$s$};
\end{tikzpicture}
\caption{Illustration of the definition of the $X_s^t(u)$'s on the Yule tree.}
\label{fig:painting_procedure}
\end{figure}
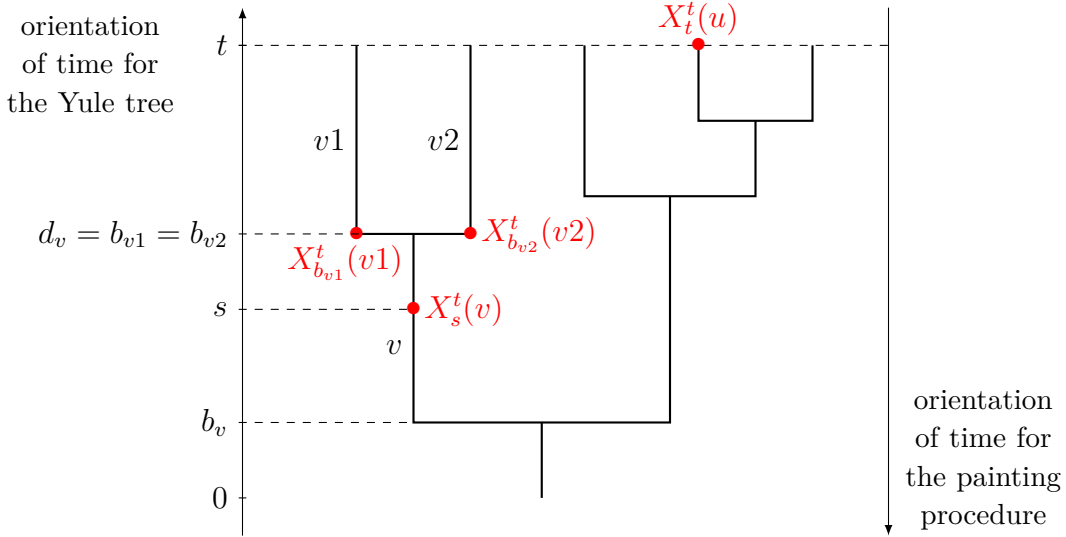

We now give  a characterization of family of laws obtained from the tree-painting scheme. For any probability distribution $\nu$ on $\R_+$ and $t > 0$, we introduce the shifted distribution $\tau_t \nu$ satisfying
\begin{equation}
  \label{eqn:defShift}
  \forall f \in \mathcal{C}_b(\R_+), \quad 
  \int_{\R_+} f(x) (\tau_t \nu)(\diff x) = f(0) \nu([0,t]) + \int_{\R_+} f(x-t) \nu(\diff x),
\end{equation}
that represents the law of $(Z-t)_+$ if $Z$ has law $\nu$. We can now describe the law of $\mu_t$ by decomposing it with respect to the first branching time in the Yule tree.
\begin{lemma}
\label{lem:recursiveEquation}
The family $(\mu_t)_{t\geq 0}$  of laws on $\R_+$   is a  family   obtained from the tree-painting scheme if and only if
\begin{equation}
  \label{eqn:recursiveEquation}
  \forall t \geq 0, \quad
  \mu_t = \int_0^t \e^{-s} \tau_s(\mu_{t-s} \ast \mu_{t-s}) \diff s + \e^{-t} \tau_t(\mu_0).
\end{equation}
\end{lemma}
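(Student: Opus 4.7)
The plan is to establish both directions by conditioning the Yule tree on the lifetime $e_\varnothing$ of its root, and then close the characterization with a Gronwall-type uniqueness argument.

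\textbf{Forward direction (the tree-painting family satisfies the recursion).} I will condition on $e_\varnothing$, the exponential time at which the root of the Yule tree splits. Two cases arise. On $\{e_\varnothing > t\}$, which has probability $e^{-t}$, the root is itself the unique leaf at height $t$, carrying an amount of paint $X_t^t(\varnothing)$ of law $\mu_0$. This painter then descends the branch of length $t$ to the root, so $X_0^t(\varnothing) = (X_t^t(\varnothing) - t)_+$ has law $\tau_t \mu_0$ by definition \eqref{eqn:defShift}. On $\{e_\varnothing = s\}$ for $s \in (0,t)$, whose density is $e^{-s}$, the root splits at height $s$ (measured from the root) into two independent subtrees, each a Yule tree of height $t-s$ carrying i.i.d.\@ initial paint of law $\mu_0$ on their leaves. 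By the branching property and the definition of the tree-painting scheme, $X_s^t(1)$ and $X_s^t(2)$ are independent with law $\mu_{t-s}$. Then by \eqref{Xtsv} at $v = \varnothing$, $X_0^t(\varnothing) = (X_s^t(1) + X_s^t(2) - s)_+$, whose law is $\tau_s(\mu_{t-s} \ast \mu_{t-s})$. Integrating over $s$ gives \eqref{eqn:recursiveEquation}.

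\textbf{Converse direction (uniqueness of solutions to \eqref{eqn:recursiveEquation}).} Suppose two families $(\mu_t)$ and $(\nu_t)$ both satisfy \eqref{eqn:recursiveEquation} with the same $\mu_0 = \nu_0$. Set $d(t) \coloneqq d_{\mathrm{TV}}(\mu_t, \nu_t)$. Since $\tau_s$ is a pushforward by a deterministic map it is a contraction for total variation, and a standard coupling gives $d_{\mathrm{TV}}(\mu \ast \mu, \nu \ast \nu) \leq 2\, d_{\mathrm{TV}}(\mu, \nu)$. Applying these to the recursion yields
\[
d(t) \leq 2\int_0^t e^{-s}\, d(t-s)\, \diff s = 2 e^{-t} \int_0^t e^u\, d(u)\, \diff u.
\]
Setting $\Phi(t) \coloneqq e^{-t}\int_0^t e^u d(u)\,\diff u$, one computes $\Phi'(t) = -\Phi(t) + d(t) \leq \Phi(t)$, so $\Phi(t) \leq \Phi(0)e^t = 0$ and consequently $d(t) \equiv 0$. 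Since the tree-painting family is one solution of the recursion, it is the only one, which establishes the ``if'' direction.

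\textbf{Main obstacle.} The only subtlety lies in handling the mass that accumulates at $0$ through the operator $\tau_s$: the identity $X_0^t(\varnothing) = (X_s^t(1) + X_s^t(2) - s)_+$ must genuinely produce $\tau_s(\mu_{t-s} \ast \mu_{t-s})$ including the atom at $0$ corresponding to painters who run out of paint strictly before reaching the root, which is exactly built into the definition \eqref{eqn:defShift} of $\tau_s$. Beyond this bookkeeping, the argument is essentially a standard renewal decomposition followed by Gronwall on total variation.
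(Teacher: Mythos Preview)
Your forward direction is essentially the same as the paper's: both condition on the first branching time $d_\varnothing$ of the Yule tree and use the branching property to identify the conditional law of $X_0^t(\varnothing)$.

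For the converse, however, you take a genuinely different route. The paper gives a \emph{constructive} argument: starting from a family $(\mu_t)$ satisfying \eqref{eqn:recursiveEquation}, it recursively builds a Yule tree from the root downward, using independent exponentials for the branching times and inverse-CDF sampling to produce the values $X^t_T(1),X^t_T(2)$ with the prescribed laws $\mu_{t-T}$. Iterating until all branches reach height $t$ yields a Yule tree with i.i.d.\ leaves of law $\mu_0$, and by construction the root value has law $\mu_t$; hence $(\mu_t)$ is literally the tree-painting family. Your approach instead proves \emph{uniqueness} of solutions to \eqref{eqn:recursiveEquation} via a total-variation Gronwall argument, and then invokes the forward direction to conclude that any solution coincides with the tree-painting family. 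Your argument is shorter and more analytic; the paper's is more probabilistic and has the mild advantage of not relying on the forward direction already being established. Both are valid. One small point worth noting: your Gronwall step implicitly uses that $t\mapsto d(t)$ is measurable and bounded; boundedness is immediate since $d(t)\le 1$, and measurability follows once the integral in \eqref{eqn:recursiveEquation} is interpreted weakly (which is needed anyway for the equation to make sense), but you might mention this.
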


\begin{proof}
Let $\T$ be a Yule tree, we recall that the first branching time $d_\varnothing$ is distributed according to an exponential random variable with parameter 1. Moreover, the subtrees starting from each child are i.i.d.\@  Yule trees. As a result, conditionally on $\{d_\varnothing=s\}$ with $s \leq t$, $X_s^t(1)$ and $X_s^t(2)$ are two independent random variables with law $\mu_{t-s}$, where as  before, $1$ and $2$ are the two children of the root $\varnothing$.  Note that by construction,  for every measurable bounded function $f$, we have
\[
  \E\left[f(X^t_0(\varnothing))\right] 
  = \E\left[f ( (X_{d_\varnothing}^t(1) 
  		+ X_{d_\varnothing}^t(2)- d_\varnothing)_+) 
  		\ind{d_\varnothing \le  t}\right]
  + \E\left[f((X_0-t)_+) \ind{d_\varnothing > t}\right],
\]
where $X_0$ is a random variable independent of $d_\varnothing$ with law $\mu_0$. Integrating w.r.t.\@ $d_\varnothing$, this proves that $(\mu_t)$ satisfies \eqref{eqn:recursiveEquation}.

Reciprocally, let $(\mu_t)_{t \geq 0}$  be a family of probability measures satisfying \eqref{eqn:recursiveEquation}. Fix $t>0$, we shall construct a continuous-time DR tree $X^t $ of height $t$ and initial law $\mu_0$ such that $X_0^t(\varnothing)$ has law $\mu_t$. To this end,  we consider on the same probability space three independent random variables $T, U, \widetilde U$ such that  $T$  has the standard exponential distribution, $U$ and $\widetilde U$ are uniformly distributed on $[0, 1]$. For any $s\ge0$, let $F_s(x) \coloneqq \mu_s([0, x]), x\ge0$ and $F_s^{-1}$ be the right-continuous inverse of $F_s$. Let $\varnothing$ be the root of the DR tree that we are constructing. Define $d_\varnothing\coloneqq T$ and denote by $1$ and $2$ the two children of $\varnothing$. Let
\[
  X_0^t(\varnothing)\coloneqq
  (X_0 - t)_+  \ind{T >t } + (X_T^t(1) + X_T^t(2)- T)_+\ind{T \le t},
\]
where $X_T^t(1)\coloneqq F^{-1}_{t-T}(U)$, $X_T^t(2)\coloneqq F^{-1}_{t-T}(\widetilde U)$ and $X_0 = F^{-1}_0(U)$. By \eqref{eqn:recursiveEquation}, we observe immediately that $X_0^t(\varnothing)$ has law $\mu_t$.  

Now, on the event $\{ T \leq t\}$, we consider $1$ and $2$ as two new roots, and we replace $X_T^t(1)$ and $X_T^t(2)$ by new random variables constructed using new independent exponential and uniforms variables on an eventually enlarged probability space, constructed in the same way as $X_0^t(\varnothing)$. The procedure can be iterated until one obtains a Yule tree of height $t$, with i.i.d.\@ random variables of law $\mu_0$ attached to each leaf of $\T_t$, in such a way that $X_0^t(\varnothing)$ is the result of the painting scheme applied to that tree, which concludes the proof. \end{proof}

\subsection{Associated McKean-Vlasov type SDE}

In the previous section, we introduced a family  $(\mu_t)_{t\ge 0}$ of probability distributions on $\R_+$,  obtained from the tree-painting scheme on a Yule tree.  This procedure allows also to introduce a Markov process $(X_s)_{s \in [0,t]}$ such that for each $s \in [0,t]$, $X_s$ has law $\mu_s$: for this, consider a Yule tree of height $t$ and let $X_s \coloneqq X^t_{t-s}(u)$ where $u$ is the leftmost particle alive at time $t-s$. Note that time is running in the opposite direction as for the Yule tree. 

This process $X$ can be described as follows: $X$ decreases linearly at speed $1$ until reaching $0$. At each branching time $s$ of the leftmost particle, $X_s$ is replaced by $X_s + \tilde{X}_s$, where $\tilde{X}_s$ is independent of $X_s$ with law $\mu_s$. This description leads to Definition~\ref{def:drp} which says that a \textit{DR process} is a non-negative Markov process $(X_t)_{t \geq 0}$ solution to the McKean--Vlasov type stochastic differential equation:

\begin{equation}
  \label{eqn:DRsde}
  \addtolength\arraycolsep{-0.125cm}
  \left\{
  \begin{array}{rl}
    X_t &= X_0 - \int_0^t \ind{X_s > 0} \diff s + \int_0^t \int_0^1 F^{-1}_s(u) N(\diff s,\diff u)\\
    F^{-1}_t(u) &= \inf\{ y \in [0,\infty) : \P(X_t \leq y) > u\},
  \end{array}
  \right.
\end{equation}
where $N$ is a Poisson point process on $\R_+ \times [0,1]$ with intensity $\diff s \diff u$.
The drift term $-\ind{X_s > 0} \diff s$ indicates the amount of paint lost per unit of length, and each atom $(s,u)$ of $N$ corresponds to a time $s$ at which the subtree gains a new branching time, in which case $F^{-1}_s(u)$ is an independent copy of $X_s$. The following result shows that the one-dimensional marginals of $X$ constitutes a family obtained from the tree-painting scheme.

\begin{proposition}
\label{prop:defMcKean}
Let $\mu_0$ be a probability distribution on $\R_+$. There exists a unique strong solution to  Equation \eqref{eqn:DRsde} such that $X_0$ has law $\mu_0$. Moreover, writing $\mu_t$ the law of $X_t$ for each $t \geq 0$, the family $(\mu_t)_{t \geq 0}$ is a  family of laws   obtained from the tree-painting scheme.
\end{proposition}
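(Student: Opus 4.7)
The plan is to use the family $(\bar{\mu}_t)_{t\ge 0}$ of tree-painting marginals produced by Lemma~\ref{lem:recursiveEquation} in order to freeze the coefficients of~\eqref{eqn:DRsde}, construct the resulting pathwise jump process, and then verify that its one-dimensional marginals must coincide with $\bar{\mu}_t$ via a Gronwall comparison with the recursive identity~\eqref{eqn:recursiveEquation}.

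Concretely, letting $\bar{F}_t^{-1}$ denote the right-continuous quantile function of $\bar{\mu}_t$, I would define $X$ inductively between the atoms of $N$: on each $[T_n,T_{n+1})$ set $X_t=(X_{T_n}-(t-T_n))_+$, and at each atom $(T_n,U_n)$ make the deterministic jump $X_{T_n}=X_{T_n^-}+\bar{F}_{T_n}^{-1}(U_n)$. Because $N$ has only finitely many atoms on every compact interval, this is a well-defined $\R_+$-valued c\`adl\`ag process, pathwise unique given $(X_0,N)$. To identify the law $\nu_t$ of $X_t$ with $\bar{\mu}_t$, the key step is to condition on the \emph{last} atom $T_L$ of $N$ in $[0,t]\times[0,1]$ (with the convention $T_L=0$ in the absence of atoms), rather than on the first one. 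Elementary Poisson computations give $\P(T_L=0)=\e^{-t}$ and $\P(T_L\in\diff r)=\e^{-(t-r)}\diff r$ on $(0,t]$; on $\{T_L=0\}$, $X_t=(X_0-t)_+\sim\tau_t\mu_0$; on $\{T_L=r\}$, the independence of the restrictions of $N$ to $[0,r)$ and $[r,t]$ forces $X_{r^-}$ to keep its unconditional law $\nu_r$ and to be independent of the uniform mark $U_L$ of the atom at $r$, so $X_r=X_{r^-}+\bar{F}_r^{-1}(U_L)\sim\nu_r\ast\bar{\mu}_r$ and $X_t=(X_r-(t-r))_+\sim\tau_{t-r}(\nu_r\ast\bar{\mu}_r)$. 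Collecting the contributions and substituting $s=t-r$ yields
\[
\nu_t \;=\; \e^{-t}\,\tau_t\mu_0 \;+\; \int_0^t \e^{-s}\,\tau_s\bigl(\nu_{t-s}\ast\bar{\mu}_{t-s}\bigr)\,\diff s.
\]
Subtracting this from~\eqref{eqn:recursiveEquation} (applied to $\bar{\mu}$) and using the bounds $\|\tau_s\eta\|_{\mathrm{TV}}\le\|\eta\|_{\mathrm{TV}}$ and $\|\eta\ast\bar{\mu}_{t-s}\|_{\mathrm{TV}}\le\|\eta\|_{\mathrm{TV}}$, the function $f(t):=\|\nu_t-\bar{\mu}_t\|_{\mathrm{TV}}$ satisfies $f(t)\le\int_0^t f(u)\,\diff u$, and Gronwall gives $f\equiv 0$. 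Hence $F_t=\bar{F}_t$ for every $t$, so $X$ actually solves~\eqref{eqn:DRsde} and its marginals form the tree-painting family.

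For uniqueness, if $(Y_t)$ is any strong solution with marginals $\mu_t^Y$, the same last-atom conditioning --- now with the jump at $r$ distributed as $\mu_r^Y$ --- produces $\mu_t^Y=\e^{-t}\tau_t\mu_0+\int_0^t \e^{-s}\tau_s(\mu_{t-s}^Y\ast\mu_{t-s}^Y)\,\diff s$, which is precisely~\eqref{eqn:recursiveEquation}. A second Gronwall estimate, using $\|\mu\ast\mu-\nu\ast\nu\|_{\mathrm{TV}}\le 2\|\mu-\nu\|_{\mathrm{TV}}$ for probability measures, proves that this integral equation has a unique solution, so $\mu^Y=\bar{\mu}$; the SDE then reduces to the pathwise-unique frozen-coefficient equation of the previous step, and $Y=X$ almost surely. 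The main obstacle is selecting the right conditioning: the first-atom decomposition only mixes $\mu_0$ with the laws $\nu_s$ and does not close on~\eqref{eqn:recursiveEquation}, whereas the last-atom slicing --- which, via the leftmost-path interpretation of Section~\ref{subsec:Yule}, corresponds to the first split of the Yule tree at its root --- is exactly what produces the convolution $\bar{\mu}_{t-s}\ast\bar{\mu}_{t-s}$ appearing in the tree-painting recursion.
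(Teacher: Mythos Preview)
Your argument is correct and takes a genuinely different route from the paper. The paper's proof is constructive: for existence it freezes the coefficients with the tree-painting quantiles $G_t$, then identifies the law of $X_t$ by explicitly rebuilding a Yule tree of height $t$ from the atoms of $N$ along the spine (Figure~\ref{fig:alternative_construction}(b)), so that $X_t$ is literally the output of a painting scheme; for uniqueness it repeats this construction recursively (Figure~\ref{fig:alternative_construction}(c)) to show that \emph{any} solution's marginal at time $t$ is again a painting-scheme value. You instead reduce everything to the integral equation~\eqref{eqn:recursiveEquation}: the last-atom decomposition of $N$ on $[0,t]$ produces a linear Volterra equation for $\nu_t$ in the existence step and exactly~\eqref{eqn:recursiveEquation} for $\mu^Y_t$ in the uniqueness step, and both are closed by a total-variation Gronwall estimate. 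Your observation that the last atom of $N$ corresponds, via the leftmost-path interpretation, to the first branching of the Yule tree is precisely why this conditioning reproduces~\eqref{eqn:recursiveEquation}, and it is the key idea that makes the analytic route work.

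What each approach buys: the paper's tree reconstruction gives direct probabilistic insight into \emph{why} the SDE marginals coincide with the painting laws and makes the role of the Yule structure transparent. Your approach is more elementary and self-contained --- it bypasses the recursive tree-building entirely and also yields, as a byproduct, a clean analytic proof of uniqueness for~\eqref{eqn:recursiveEquation} itself (via $\|\mu\ast\mu-\nu\ast\nu\|_{\mathrm{TV}}\le 2\|\mu-\nu\|_{\mathrm{TV}}$), whereas the paper relies on the converse direction of Lemma~\ref{lem:recursiveEquation}. One small technical point you gloss over is the measurability of $t\mapsto\|\nu_t-\bar\mu_t\|_{\mathrm{TV}}$ needed for Gronwall; this is easily handled (e.g.\ by expressing the TV norm as a countable supremum of integrals against a dense family of bounded continuous functions, each of which is continuous in $t$ by weak continuity of the marginals), but it is worth a sentence.
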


\begin{proof}
We first prove there exists a solution to \eqref{eqn:DRsde}. To do so, let $(\mu_t)_{t \geq 0}$ be a family obtained from the tree-painting scheme.
For all $t \geq 0$ and $u \in [0,1]$, we denote by $G_t(u) = \inf\{ y \in [0,\infty) : \mu_t([0,y]) > u\}$. We observe easily that there exists a unique strong solution to the SDE
\[
  X_t = X_0 - \int_0^t \ind{X_s > 0} \diff s + \int_0^t \int_0^1 G_s(u) N(\diff s,\diff u),
\]
by decomposing on the atoms of the point measure $N$. We are going to show that, if $X_0$ has law $\mu_0$, then $X_t$ has law $\mu_t$ for each $t > 0$. This will prove that $X$ is a strong solution of \eqref{eqn:DRsde}, as in that case we will have $G_s = F_s^{-1}$ for all $s \geq 0$.

Let $t > 0$, we denote by $(s_j,u_j)_{1 \leq j \leq n}$ the atoms of $N$ in $[0,t] \times [0,1]$, ranked in the increasing order of their first coordinate. For each $j \leq n$, we write $\tilde{X}_j = G_{s_j}(u_j)$. We note that conditionally on $(s_j)_{1 \leq j \leq n}$, $(\tilde{X}_j)$ are independent random variables, and $X_j$ has law $\mu_{s_j}$. As $(\mu_s)_{s\geq 0}$ is a DR model, one can construct a Yule tree $\tau_j$ and a decoration of the leaves with i.i.d.\@ random variables with law $\mu_0$, such that the amount of paint remaining at the root of $\tau_j$ is $\tilde{X}_j$.

We then define a tree of height $t$ with a decoration on its leaves as follows: we start with a line of height $t$, and for each $j \leq n$, we attach the decorated tree $\tau_j$ to the line at height $t - s_j$. To the top of the line we attach the random variable $X_0$. This procedure constructs a Yule tree of height $t$, such that each leaf is decorated with an i.i.d.\@ variable with law $\mu_0$ (see Figure \ref{fig:alternative_construction}(b)). Therefore, $X_t$ is obtained as the result of a tree-painting procedure, hence has law $\mu_t$. We conclude that $(X_t, t \geq 0)$ is a strong solution to \eqref{eqn:DRsde}.
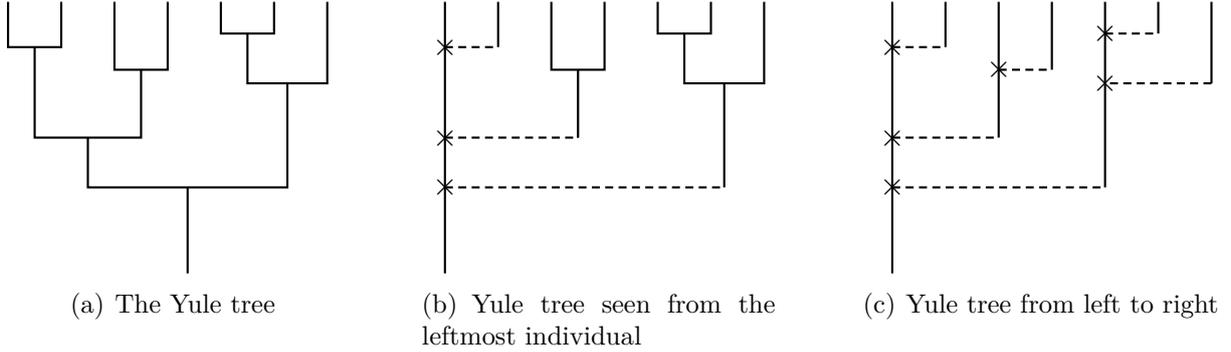
\begin{figure}[ht]
\centering
\subfigure[The Yule tree]{%
\begin{tikzpicture}[xscale=0.7,yscale=0.6]
  \draw [thick] (0,5) -- (0,4) -- (1,4) -- (1,5);
  \draw [thick] (2,5) -- (2,3.5) -- (3,3.5) -- (3,5);
  \draw [thick] (4,5) -- (4,4.3) -- (5,4.3) -- (5,5);
  \draw [thick] (6,5) -- (6,3.2) -- (4.5,3.2) -- (4.5,4.3);
  \draw [thick] (2.5,3.5) -- (2.5,2) -- (0.5,2) -- (0.5,4);
  \draw [thick] (5.25, 3.2) -- (5.25,0.9) -- (1.5,0.9) -- (1.5,2);
  \draw [thick] (3.375,.9) -- (3.375,-1);
\end{tikzpicture}
}%
\hfill%
\subfigure[Yule tree seen from the leftmost individual]{%
\begin{tikzpicture}[xscale=0.7,yscale=0.6]
  \draw [thick] (0,5) -- (0, -1);

  \draw (0,4) node{$\times$};
  \draw [thick, densely dashed] (0,4) -- (1,4);
  \draw [thick] (1,4) -- (1,5);

  \draw (0,2) node{$\times$};
  \draw [thick, densely dashed] (0,2) -- (2.5,2);
  \draw [thick] (2.5,3.5) -- (2.5,2);
  \draw [thick] (2,5) -- (2,3.5) -- (3,3.5) -- (3,5);
  
  \draw (0,0.9) node{$\times$};
  \draw [thick, densely dashed] (0,0.9) -- (5.25,0.9);
  \draw [thick] (4,5) -- (4,4.3) -- (5,4.3) -- (5,5);
  \draw [thick] (6,5) -- (6,3.2) -- (4.5,3.2) -- (4.5,4.3);
  \draw [thick] (2.5,3.5) -- (2.5,2);
  \draw [thick] (5.25, 3.2) -- (5.25,0.9);
\end{tikzpicture}
}%
\hfill%
\subfigure[Yule tree from left to right]{
\begin{tikzpicture}[xscale=0.7,yscale=0.6]
  \draw [thick] (0,5) -- (0, -1);

  \draw (0,4) node{$\times$};
  \draw [thick, densely dashed] (0,4) -- (1,4);
  \draw [thick] (1,4) -- (1,5);

  \draw (0,2) node{$\times$};
  \draw [thick, densely dashed] (0,2) -- (2,2);
  \draw [thick] (2,2) -- (2,5);
  \draw (2,3.5) node{$\times$};
  \draw [thick, densely dashed] (2,3.5) -- (3,3.5);
  \draw [thick] (3,3.5) -- (3,5);

  \draw (0,0.9) node{$\times$};
  \draw [thick, densely dashed] (0,0.9) -- (4,0.9);
  \draw [thick] (4,0.9) -- (4,5);
  \draw (4,4.3) node{$\times$};
  \draw [thick, densely dashed] (4,4.3) -- (5,4.3);
  \draw [thick] (5,4.3) -- (5,5);
  \draw (4,3.2) node{$\times$};
  \draw [thick, densely dashed] (6,3.2) -- (4,3.2);
  \draw [thick] (6,5) -- (6,3.2);
\end{tikzpicture}
}
\caption{Two alternative constructions of a Yule tree}
\label{fig:alternative_construction}
\end{figure}

We now prove strong uniqueness of the solution to \eqref{eqn:DRsde}. We first show uniqueness in law, from which we will immediately deduce pathwise uniqueness. Let $(X_t, t \geq 0)$ be a solution to \eqref{eqn:DRsde}, for all $t \geq 0$, we denote by $\nu_t$ the law of $X_t$, and we define $H_t(u) = \inf\{ y \geq 0 : \nu_t([0,y]) > u\}$. We first remark that there is strong existence and uniqueness of the solution of
\begin{equation}
  \label{eqn:mark2}
  X_t = X_0 - \int_0^t \ind{X_s > 0} \diff s + \int_0^t \int_0^1 H_s(u) N(\diff s,\diff u),
\end{equation}
and that $X$ is this solution.

Let $t > 0$, our aim is to prove that $\nu_t = \mu_t$. For this, we show that, using the same procedure as above, we can represent $X_t$ as the result of a tree-painting procedure on a Yule tree of height $t$, with leaves decorated with law $\mu_0$. Indeed, we denote once again $(s_j, u_j)_{1 \leq j \leq n}$ the atoms of the point process $N$ on $[0,t] \times [0,1]$ and $\tilde{X}_j \coloneqq H_{s_j}(u_j)$.
Observe that, conditionally on $(s_j)_{1\leq j \leq n}$, each $\tilde{X}_j$ is a copy of $X_{s_j}$, independent of $(\tilde{X}_i)_{i \neq j}$ and of $X_0$. We construct the tree of height $t$ as follows. We start with a line of height $t$, and for each $j \leq n$, we add a mark on this line at height $t-s_j$. At each of these marks is added the random variable $\tilde{X}_j$. 
As $\tilde{X}_j$ is an independent copy of $X_{s_j}$, it can be itself constructed as the unique strong solution of Equation \eqref{eqn:mark2}, where the point process $N$ and the initial value $X_0$ are replaced by i.i.d.\@ copies $N^{(j)}$ and $X_0^{(j)}$.

Hence, we can construct new lines and marks starting from each of the marks in the original line and proceed so on recursively. This constructs a Yule tree, defined in a slightly different way: indeed, in this tree, each line lives until time~$t$, giving independently birth to children at rate $1$  (this is the construction pictured in Figure \ref{fig:alternative_construction}(c)). Moreover, on each of the leaves at level $t$ are placed i.i.d.\@ random variables with law $\mu_0$, corresponding to the starting position for each of the stochastic differential equation. Therefore, $X_t$ is once again constructed as a tree-painting procedure on a Yule tree of height~$t$ and we can conclude that $\nu_t = \mu_t$. 

In particular, this proves that $H = G$ and, thus, that $X$ is a solution to the SDE
\[
  X_t = X_0 - \int_0^t \ind{X_s > 0} \diff s + \int_0^t \int_0^1 G_s(u) N(\diff s,\diff u),
\]
for which there is pathwise uniqueness of the solution associated to a given $X_0$ and $N$. Hence, given $X_0$ and $N$, there exists a unique strong solution to \eqref{eqn:DRsde}, and its unidimensional marginals form a  family  of laws  obtained from the tree-painting scheme.
\end{proof}

This result proves there is equivalence between the family of laws defined via the tree-painting scheme, or by the McKean-Vlasov type equation. To complete the proof of Theorem \ref{t:main1}, it is therefore enough to show that this family of law is the unique solution of the partial differential equation \eqref{eqn:DRsde}, which is done in the next section.

\subsection{The partial differential equation representation}
\label{sec:pde}

We observe here that a family of laws obtained from the tree-painting scheme can be rewritten as the weak solution of a partial differential equation, the key being the application of It\^o's formula to the solution of the stochastic differential equation \eqref{eqn:DRsde}. We then study the existence and uniqueness of the solutions of this partial differential equation using the tree-painting scheme, and in particular Lemma~\ref{lem:recursiveEquation}.

As in Definition \ref{def:drm}, we say that a family $(\nu_t)_{ t \geq 0}$ of finite measures on $\R$ is a weak solution to the partial differential equation
\begin{equation}
  \label{eqn:pde}
  \partial_t \nu_t(\diff x) = \partial_x (\ind{x>0}\nu_t(\diff x)) + (\nu_t \ast \nu_t)(\diff x) - \nu_t(\diff x),
\end{equation}
if $t \mapsto \nu_t$ is measurable and, for all $\mathcal{C}^1$ bounded functions $f$ with bounded derivative, we have
\begin{align}
  \label{eqn:pdeWeak}
  & \int_\R f(x) \nu_t(\diff x) - \int_\R f(x) \nu_0(\diff x) \nonumber \\
  &  = \int_0^t\left( - \int_\R (f'(x) \ind{x>0} + f(x)) \nu_s(\diff x) + \int_{\R^2} f(x+y)\nu_s(\diff x)\nu_s(\diff y)\right) \diff s. 
\end{align}
In particular, if $(\nu_t, t \geq 0)$ is a family of probability measures, this equation can be rewritten
\begin{align}
  \label{eqn:pdeWeak2}
  & \int_\R f(x) \nu_t(\diff x) - \int_\R f(x) \nu_0(\diff x) \nonumber \\
  & = \int_0^t\left( - \int_\R f'(x) \ind{x>0} \nu_s(\diff x) + \int_{\R^2} (f(x+y) - f(x)) \nu_s(\diff x)\nu_s(\diff y) \right) \diff s. 
\end{align}

The following result proves existence and uniqueness of weak solutions of \eqref{eqn:pde} in the case where the initial condition is a probability measure on $\R_+$. 
In other word, there exists a unique (continuous-time) DR model with initial distribution $\mu_0$, for all probability measure $\mu_0$ on $\R_+$.

\begin{proposition}
\label{prop:defedp}
Let $(\mu_t)_{t\ge 0}$  be a  family  of laws  obtained from the tree-painting scheme, then $(\mu_t)_{t\ge 0}$  is a weak solution to \eqref{eqn:pde}.

Reciprocally, if  $(\nu_t)_{t\ge 0}$ is a family of non-negative finite measures on $\R$ which is a weak solution to \eqref{eqn:pde} and $\nu_0$ is a probability distribution on $\R_+$, then  $(\nu_t)_{t\ge 0}$ is a  family  of laws  obtained from the tree-painting scheme.
\end{proposition}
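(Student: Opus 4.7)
By Proposition \ref{prop:defMcKean}, $(\mu_t)_{t\geq 0}$ are the one-dimensional marginals of the strong solution $(X_t)$ of the McKean--Vlasov SDE \eqref{eqn:DRsde}. For $f \in \calC^1$ bounded with bounded derivative, Itô's formula for jump semimartingales applied to $f(X_t)$ gives
\[
f(X_t) - f(X_0) = -\int_0^t f'(X_s)\ind{X_s>0}\,\diff s + \int_0^t \!\int_0^1 \bigl[f(X_{s-}+F_s^{-1}(u)) - f(X_{s-})\bigr]\,N(\diff s,\diff u).
\]
Taking expectations (the compensator of $N$ is $\diff s\,\diff u$) and using that $F_s^{-1}(U)$ has law $\mu_s$ independent of $X_s$ when $U$ is uniform, the jump integral reduces to $\int_0^t\iint [f(x+y)-f(x)]\,\mu_s(\diff x)\mu_s(\diff y)\,\diff s$. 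Since $\mu_s$ is a probability measure, this is equivalent to \eqref{eqn:pdeWeak}.

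\textbf{Converse: preliminary reductions.} The plan is to show that any such weak solution satisfies the recursive integral equation \eqref{eqn:recursiveEquation} and then invoke Lemma \ref{lem:recursiveEquation}. Testing \eqref{eqn:pdeWeak} against $f \equiv 1$ produces the logistic ODE $y'(s) = y(s)(y(s)-1)$ for $y(s) \coloneqq \nu_s(\R)$; since $y(0)=1$, the unique solution is $y \equiv 1$, so $\nu_t$ is a probability measure for all $t$. For the support, a smooth non-negative $f$ supported in $(-\infty,-\varepsilon]$ makes the drift term in \eqref{eqn:pdeWeak} vanish identically, while the linear and bilinear terms can be controlled by $\nu_s((-\infty,0))$; a Gronwall-type argument then forces $\nu_t((-\infty,-\varepsilon))=0$, and letting $\varepsilon\to 0$ yields $\mathrm{supp}(\nu_t)\subset \R_+$.

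\textbf{Duhamel argument.} Rewrite \eqref{eqn:pdeDef} as $\partial_t \nu_t = \mathcal{L}_0 \nu_t + \nu_t \ast \nu_t$ with $\mathcal{L}_0 \nu \coloneqq \partial_x(\ind{x>0}\nu) - \nu$, whose generated semigroup is $S_r\nu = e^{-r}\tau_r\nu$ (shift with exponential killing). The expected Duhamel identity $\nu_t = S_t\nu_0 + \int_0^t S_r(\nu_{t-r}\ast\nu_{t-r})\,\diff r$ is exactly \eqref{eqn:recursiveEquation}. To derive it from the weak formulation, fix $g \in \calC^1$ bounded with bounded derivative and consider the dual evolution
\[
u_s(x) \coloneqq e^{-(t-s)}\, g\bigl((x-(t-s))_+\bigr), \qquad s \in [0,t],\ x\geq 0,
\]
corresponding to $S_{t-s}^{\ast}g$. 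A direct computation shows $\partial_s u_s(x) - u_s(x) - u_s'(x)\ind{x>0}=0$ almost everywhere on $[0,t]\times\R_+$. Plugging $u_s$ as a time-dependent test function into \eqref{eqn:pdeWeak} and integrating by parts in $s$ cancels the drift, killing and linear terms, leaving
\[
\int g\,\diff\nu_t - \int u_0\,\diff\nu_0 = \int_0^t \iint u_s(x+y)\,\diff\nu_s(x)\,\diff\nu_s(y)\,\diff s.
\]
Using \eqref{eqn:defShift}, the right-hand side identifies with $\int g\,\diff\bigl[\int_0^t e^{-r}\tau_r(\nu_{t-r}\ast\nu_{t-r})\,\diff r\bigr]$ and $\int u_0\,\diff\nu_0 = \int g\,\diff(e^{-t}\tau_t\nu_0)$. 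Arbitrariness of $g$ yields \eqref{eqn:recursiveEquation}, and Lemma \ref{lem:recursiveEquation} concludes.

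\textbf{Main obstacle.} The function $u_s$ has a kink at $x=t-s$ unless $g'(0)=0$, so it is merely Lipschitz, and moreover depends on $s$; it therefore cannot be plugged into \eqref{eqn:pdeWeak} directly. The technical core of the proof is to regularize $u_s$ into a bona fide $\calC^1$ test function (e.g.\@ by mollifying the positive part, or by restricting to $g$ with $g'(0)=0$ and then using density) and to justify the time-dependent weak identity through a Riemann-sum decomposition in $s$, passing to the limit via dominated convergence with the uniform bounds on $g$, $g'$ and the preserved total mass of $\nu_s$.
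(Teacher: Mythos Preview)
Your approach is essentially the same as the paper's: Itô's formula for the direct direction, then for the converse, mass conservation, support in $\R_+$, and a Duhamel-type computation to recover \eqref{eqn:recursiveEquation} before invoking Lemma~\ref{lem:recursiveEquation}.

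The one place where the paper is cleaner is precisely your ``main obstacle''. Rather than mollifying or doing a Riemann-sum argument, the paper simply takes $g \in \calC^1_c((0,\infty))$ from the start. Then $g$ and $g'$ vanish near $0$, so $x \mapsto g((x-a)_+)$ is genuinely $\calC^1$ for every $a$, and there is no kink to worry about. (Your own suggestion ``restricting to $g$ with $g'(0)=0$'' is exactly this idea.) The paper then packages the computation via the two-variable function $G_g(a,b) \coloneqq \int g((x-a)_+)\,\nu_b(\diff x)$: dominated convergence gives $\partial_a G_g$, the weak PDE \eqref{eqn:pdeWeak} applied to the legitimate $\calC^1$ test function $x \mapsto g((x-a)_+)$ gives $\partial_b G_g$, and one reads off \eqref{eqn:recursiveEquation} by integrating $\partial_s\bigl(e^{-(t-s)} G_g(t-s,s)\bigr)$ from $0$ to $t$. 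This sidesteps the time-dependent test function formalism entirely. Since both sides of \eqref{eqn:recursiveEquation} are probability measures on $\R_+$, testing against $\calC^1_c((0,\infty))$ suffices to identify them.
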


\begin{proof}
From Proposition \ref{prop:defMcKean}, we observe that if $(\mu_t)_{t\ge 0}$  is a  family  of laws  obtained from the tree-painting scheme, we can construct a Markov process $(X_t)_{ t \geq 0}$ solution of \eqref{eqn:DRsde} such that the law of $X_t$ is $\mu_t$ for all $t \geq 0$. Therefore, for all $\mathcal{C}^1$ bounded function $f$, we have
\begin{align*}
  \E\left[ f(X_t) \right]
  &= \E\left[ f(X_0) - \int_0^t f'(X_s) \ind{X_s>0} \diff s\right]\\
  &\relphantom{=} {} + \E\left[\int_0^t \int_0^1 (f(X_{s-} + F_s^{-1}(u)) - f(X_{s-})) N(\diff s,\diff u) \right],
\end{align*}
by the It\^o formula. Therefore, using the linearity of the expectation and the Campbell's formula, we obtain
\[
 \E[f(X_t)] - \E[f(X_0)] = \int_0^t (-\E[f'(X_s) \ind{X_s>0}] + \E[f(X_s+Y_s)-f(X_s)]) \diff s,
\]
with $Y_s$ an independent copy of $X_s$, proving that $(\mu_t)_{t \geq 0}$ is a weak solution of \eqref{eqn:pde}.

Reciprocally, let $\nu_0$ be a probability distribution on $\R$ and $(\nu_t)_{t \geq 0}$ be a weak solution of \eqref{eqn:pde} (in particular, $\nu_t$ is finite). We first observe that for each $t > 0$, the total mass of $\nu_t$ remains equal to $1$. Indeed, using $f=1$, we have
\[
  \nu_t(\R) - \nu_0(\R) = \int_0^t (\nu_s(\R)^2 - \nu_s(\R)) \diff s.
\]
Hence, the total mass is the solution of the differential equation $y' = -y(1-y)$ with initial condition $y(0)=1$, which is $y(t) = 1$ for all $t \geq 0$, proving the total mass is preserved in \eqref{eqn:pde}. We conclude that $(\nu_t, t \geq 0)$ is a family of probability distributions.

We now prove that if $\nu_0$ put no mass on $(-\infty,0)$ then so does $\nu_t$ for all $t > 0$. To do so, we observe that for all $\mathcal{C}^1$ non-negative bounded function $f$ with support in $(-\infty,0)$, we have
\[
  \int f(x) \nu_t(\diff x)
   = \int_0^t \int (f(x+y)-f(x)) \nu_s(\diff x)\nu_s(\diff y) \diff s \geq - \int_0^t \int f(x) \nu_s(\diff x) \diff s,
\]
hence the function $t \mapsto\int f(x) \nu_t(\diff x)$ remains equal to $0$ at all times by Gronwall lemma. We conclude that $\nu_t((-\infty,0)) = 0$ for all $t > 0$.

Finally, we prove that $(\nu_t)_{t \geq 0}$ satisfies  Equation \eqref{eqn:recursiveEquation}. To do so, let $g$ be a $\mathcal{C}^1$ function with compact support in $(0,\infty)$. We define the function
\[
  G_g(a,b) \coloneqq \int g((x-a)_+) \nu_b(\diff x).
\]
By dominated convergence and \eqref{eqn:pdeWeak}, we observe that $G_g$ is a $\mathcal{C}^1$ function, and that
\begin{align*}
  \partial_a G_g(a,b) &= -\int g'(x-a) \ind{x > a} \nu_b(\diff x)\\
  \partial_b G_g(a,b) &= - \int \left(g'(x-a) \ind{x>a} + g((x-a)_+)\right) \nu_b(\diff x) \\
  & \relphantom{=} {} + \int g((x+y-a)_+) \nu_b(\diff x) \nu_b(\diff y).
\end{align*}
Combining both equations, it follows that
\[
  \int g(z) \tau_a(\nu_b\ast \nu_b)(\diff z) = \partial_b G_g(a,b) - \partial_a(G_g(a,b)) + \int g((x-a)_+) \nu_b(\diff x).
\]
and, therefore, we get
\begin{align*}
  G_g(0,t) - \e^{-t}G_g(t,0)
  &= \int_0^t \partial_s(\e^{-(t-s)}G_g(t-s,s)) \diff s\\
  &= \int_0^t \int_0^t \e^{-(t-s)} \left(G_g(t-s,s) - \partial_a G_g(t-s,s) + \partial_b G_g(t-s,s) \right)\diff s\\
  &= \int_0^t \e^{-s} g(z) \tau_s(\nu_{t-s} \ast \nu_{t-s})(\diff z) \diff s,
\end{align*}
using the change of variables $s \to t-s$ in the last line. This proves that $(\nu_t)_{t \geq 0}$ satisfies \eqref{eqn:recursiveEquation}, hence by Lemma \ref{lem:recursiveEquation}, this is a  family  of laws  obtained from the tree-painting scheme, concluding the proof.
\end{proof}

\begin{remark}
\label{rem:pde}
Note that by definition, there exists a unique family  $(\mu_t)_{t\ge 0}$  of laws  obtained from the tree-painting scheme and  associated to a given initial probability measure $\mu_0$. As a result, Proposition \ref{prop:defedp} shows there exists a unique family of probability measures which are weak solutions of \eqref{eqn:pde}.
\end{remark}

\begin{proof}[Proof of Theorem \ref{t:main1}]
The proof  follows immediately from  Propositions \ref{prop:defMcKean} and \ref{prop:defedp}.
\end{proof}

Thanks to Proposition \ref{prop:defedp}, we observe that if we can find a solution to \eqref{eqn:pde}, then this family of probability measures is a DR model. Despite obtaining few information from \eqref{eqn:pde} (for example, we were not able to prove the uniqueness of the solution in the space of distributions, or that a weak solution starting from a smooth initial condition remains smooth at all times), we were able to exhibit a set of measures, which is stable with respect to the continuous-time DR transformation.

We note that strong solutions to \eqref{eqn:pde} can be described in the following way. Let $p, \phi$ be two $\mathcal{C}^1$ functions, the family defined by
\[
  \mu_t(\diff x) = p(t) \delta_0(\diff x) + (1-p(t)) \phi(t,x) \diff x
\]
is a DR model if and only if $p(0) \in [0,1]$, $\phi(0,x) \geq 0$ and $\int \phi(0,x) \diff x =1$, and
\begin{equation} \label{edppetphi}
\addtolength\arraycolsep{-0.125cm}
  \left\{
  \begin{array}{rl}
    p'(t) & = (1 - p(t))(\phi(t,0) -p(t))\\
    \partial_t \phi(t,\cdot) & = \partial_x \phi(t,\cdot) +(1-p(t)) \phi(t,\cdot) \ast \phi(t,\cdot) + (p(t) +\phi(t, 0) - 1)\phi(t,\cdot).
  \end{array}
  \right.
\end{equation}
Finding solutions to this differential equation, or proving existence, uniqueness and regularity for the solutions appears to be an interesting open problem.

\section{Exactly solvable Derrida--Retaux model}\label{s:casexponentiel}

In this section, we study a continuous-time DR model starting from an initial measure
\begin{equation}
  \label{eqn:initialDistribution}
  \mu_0(\diff x) = p(0) \delta_0(\diff x) + (1 - p(0)) \lambda(0) \e^{-\lambda(0) x} \diff x, \quad p(0) \in [0,1], \lambda(0)>0, 
\end{equation}
which is a mixture of a Dirac mass at 0 and an exponential random variable (on $\R_+$). It turns out that the evolution of the DR model preserves the class of mixtures of exponential random variables and Dirac masses at $0$. In other words, for all $t > 0$, there exists $p(t) \in [0,1]$ and $\lambda(t)>0$ such that
\begin{equation}\label{eqn:mut}
  \mu_t(\diff x) = p(t) \delta_0(\diff x) + (1 -p(t)) \lambda(t) \e^{-\lambda(t) x} \diff x,
\end{equation}
and the functions $(p,\lambda)$ solve a system of ordinary differential equations. More precisely, the following result holds.
\begin{lemma}
\label{lem:systDiff}
The family $(\mu_t)_{t \geq 0}$ is a DR model if and only if $(p,\lambda)$ is solution of the system of ordinary differential equations
\begin{equation}
\label{eqn:systDiff}
  \addtolength\arraycolsep{-0.125cm}
  \left\{
  \begin{array}{rl}
    p' &= (1 - p)( \lambda - p)\\
    \lambda' &= - \lambda (1-p).
  \end{array}
  \right.
\end{equation}
\end{lemma}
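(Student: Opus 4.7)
The plan is to use the characterization \eqref{edppetphi} of strong solutions to the PDE \eqref{eqn:pde} established at the end of Section~\ref{sec:pde}, specialized to the ansatz $\phi(t,x) = \lambda(t)\e^{-\lambda(t)x}$ coming from \eqref{eqn:mut}. Both directions of the equivalence will then reduce to a direct identification of coefficients in an affine polynomial in $x$, obtained after cancelling a common factor $\e^{-\lambda(t)x}$.

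First I would compute the four elementary ingredients entering \eqref{edppetphi}: $\phi(t,0) = \lambda(t)$, $\partial_x\phi(t,x) = -\lambda(t)^2\e^{-\lambda(t)x}$, $\partial_t\phi(t,x) = \lambda'(t)(1-\lambda(t)x)\e^{-\lambda(t)x}$, and the convolution $(\phi(t,\cdot)\ast\phi(t,\cdot))(x) = \lambda(t)^2\, x\, \e^{-\lambda(t)x}$. Plugging $\phi(t,0)=\lambda(t)$ into the first line of \eqref{edppetphi} yields immediately $p' = (1-p)(\lambda - p)$, which is the first equation of \eqref{eqn:systDiff}.

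The remaining content is the second line of \eqref{edppetphi}. Substituting the ingredients above and dividing both sides by $\e^{-\lambda(t)x}$ produces the affine identity in $x$
\[
\lambda' - \lambda'\lambda x \;=\; -\lambda^2 + (1-p)\lambda^2\, x + (p+\lambda-1)\lambda.
\]
Matching constants gives $\lambda' = -\lambda(1-p)$, and matching coefficients of $x$ gives $-\lambda'\lambda = (1-p)\lambda^2$, i.e.\ the same identity. This consistency between the two coefficient matches is precisely what forces the exponential-Dirac form to be stable under the dynamics, and it is the only part of the argument that carries real content.

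Both implications follow at once. If $(\mu_t)$ is a DR model of the form \eqref{eqn:mut}, then \eqref{edppetphi} holds and the identification above yields \eqref{eqn:systDiff}; conversely, if $(p,\lambda)$ solves \eqref{eqn:systDiff} with $p(0)\in[0,1]$ and $\lambda(0)>0$, running the computation backwards shows that $\mu_t$ given by \eqref{eqn:mut} satisfies \eqref{edppetphi}, hence is a DR model. The only potential obstacle is to invoke \eqref{edppetphi} as a genuine characterization rather than merely a sufficient condition; but since we are already assuming the ansatz \eqref{eqn:mut} with $\mathcal{C}^1$ coefficients $p,\lambda$, the family $(\mu_t)$ is a classical (strong) solution of \eqref{eqn:pde}, and the characterization from Section~\ref{sec:pde} applies without subtlety.
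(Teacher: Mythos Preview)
Your computation is correct, and your approach---specializing the strong-solution system \eqref{edppetphi} to the exponential ansatz and matching coefficients---is a legitimate alternative to the paper's proof. The paper does not use \eqref{edppetphi}; instead it verifies the \emph{weak} formulation \eqref{eqn:pdeWeak} directly, by computing $g(t) = \int f\,\diff\mu_t$ for a generic test function $f$, differentiating, and checking it equals the right-hand side. Your route is shorter because \eqref{edppetphi} has already absorbed the integration by parts and the separation of the Dirac mass.

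Two points the paper handles that you skip. First, the paper explicitly proves global existence of the ODE solution on $\R_+$ by showing $(p(t),\lambda(t))$ stays in the compact set $[0,1]\times[0,\lambda(0)]$; you should at least mention this, since otherwise your ``if'' direction only produces a DR model on a possibly bounded time interval. Second, and more substantively, for the ``only if'' direction you invoke \eqref{edppetphi} as a characterization, which requires $p,\lambda$ to be $\mathcal{C}^1$ a priori---but if you only know that $(\mu_t)$ is a DR model of the form \eqref{eqn:mut}, this regularity is not given. Your last paragraph asserts the assumption rather than justifying it. The paper avoids this entirely: it proves only the ``if'' direction computationally, and then gets ``only if'' for free from the uniqueness of DR models (Proposition~\ref{prop:defedp}). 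You could do the same: once you have shown that the ODE solution yields \emph{a} DR model with initial law $\mu_0$, uniqueness forces it to be \emph{the} DR model, so any $(p,\lambda)$ parametrizing the DR model must coincide with the ODE solution.
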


\begin{proof}
By Proposition \ref{prop:defedp}, there exists a unique DR model with initial distribution $\mu_0$  given by \eqref{eqn:initialDistribution}. Therefore, 
letting $(p,\lambda)$ denote the maximal solution of the system of ordinary differential equations \eqref{eqn:systDiff} with initial condition $(p(0),\lambda(0))$ (unique by Cauchy--Lipschitz theorem), it is enough to prove that this solution is well-defined for all $t \geq 0$ and that $(\mu_t)_{t \geq 0}$ is a DR model.

We first observe that if $(p,\lambda)$ solve \eqref{eqn:systDiff}, then $\lambda$ is non-increasing, and if $\lambda(0) > 0$ then $\lambda(t) > 0$ as long as the solution is well-defined. Similarly, if $p(0) \leq 1$ then $p(t) \leq 1$, and as $p'(t) \geq - (1-p(t))p(t)$, $p(t) \geq 0$ for all $t$ such that the solution is well-defined. As a result, we deduce that $(p(t),\lambda(t)) \in [0,1] \times [0,\lambda_0]$. The solution remaining in a compact set, it cannot explode in finite time, therefore $(p,\lambda)$ is well-defined for all $t \geq 0$.

We now prove that $(\mu_t)_{t \geq 0}$ is a DR model. By Proposition \ref{prop:defedp}, it is enough to prove that the function is a weak solution to \eqref{eqn:pde}. Let $f$ be a $\mathcal{C}^1$ bounded function with a bounded derivative. We define
\[
  g(t) = \int_{\R_+} f(x) \mu_t(\diff x) 
  = p(t) f(0) + (1 - p(t)) \lambda(t) \int_0^\infty f(x) \e^{-\lambda(t) x} \diff x.
\]
We write $E_0(t) \coloneqq \lambda(t) \int_0^\infty f(x) \e^{-\lambda(t) x} \diff x$ and $E_1(t) \coloneqq \lambda(t) \int_0^\infty x f(x) \e^{-\lambda(t) x} \diff x$. By dominated convergence, we observe that, on the one hand, the derivative of $g$ is given by
\begin{align*}
  g' & = p' f(0) + \left(- p' + (1 - p) \tfrac{\lambda'}{\lambda}\right) E_0 - (1 - p) \lambda' E_1 \\
  & = (1-p) (\lambda-p) f(0) - (1-p)(1 + \lambda - 2p) E_0 + (1-p)^2 \lambda E_1.
\end{align*}
On the other hand, simple computations yield
\begin{align*}
  \int_{\R_+} (f'(x) \ind{x>0} + f(x)) \mu_t(\diff x)
  & = p(t) f(0) + \lambda(t)(1 - p(t)) \int_0^\infty (f'(x)  + f(x)) \e^{-\lambda(t) x} \diff x \\
  &= (p(t) - \lambda(t)(1-p(t))) f(0) + (1-p(t))( 1 + \lambda(t)) E_0(t),
\end{align*}
by integration by part, and
\begin{align*}
  &\int_{(\R_+)^2} f(x+y) \mu_t(\diff x) \mu_t(\diff y)\\ 
  & = p(t)^2 f(0) + 2 (1-p(t)) E_0(t) + (1 -p(t))^2 \lambda(t)^2 \int_{(\R_+)^2} f(x+y)\e^{-\lambda(t) (x+y)} \diff x \diff y\\
  & = p(t)^2 f(0) + 2p(t) (1-p(t)) E_0(t) + (1 -p(t))^2 \lambda(t)E_1(t).
\end{align*}
As a result, we obtain that
\begin{align*}
  - \int_{\R_+} (f'(x) \ind{x>0} + f(x)) \mu_t(\diff x) + \int_{(\R_+)^2} f(x+y) \mu_t(\diff x) \mu_t(\diff y) = g'(t)
\end{align*}
showing that $(\mu_t)_{t \geq 0}$ is a DR model, which concludes the proof.
\end{proof}

As a result of Lemma \ref{lem:systDiff}, to study the DR model with initial law $\mu_0$ given in \eqref{eqn:initialDistribution}, it is enough to study the system of ordinary differential equations \eqref{eqn:systDiff}. This is what is done in the rest of the section, allowing us in particular to prove the DR conjecture in this particular setting.

\begin{remark}
\label{rem:systDiff}
Observe that, similarly, the evolution of a DR model starting by any mixture of Dirac mass at $0$ and exponential random variables at time $0$ can be represented by a system of differential equations. More precisely, for any $n \in \N$ the measure
\[
  \nu_t(\diff x) = p(t) \delta_0 + (1 - p(t))  \sum_{j=1}^n q_j(t) \lambda_j(t) \e^{-\lambda_j(t)x} \diff x
\]
is a DR model if and only if the functions $p$, $q_j$, $\lambda_j$ satisfy
\begin{equation}
  \addtolength\arraycolsep{-0.125cm}
  \left\{
  \begin{array}{rl}
    p' & = (1-p) \left( \sum_{k=1}^n q_k \lambda_k - p \right) \\
    q_j' & =- q_j \left(  (1-p) (1- q_j + 2 \sum_{k \neq j} \frac{q_k \lambda_k}{\lambda_j - \lambda_k} ) + \lambda_j - \sum_{k=1}^n q_k \lambda_k \right)\\
    \lambda_j' & = -(1-p) q_j \lambda_j,
  \end{array}
  \right. 
\end{equation}
and if $p(0),q_j(0) \in [0,1]$, $\lambda_j(0) \geq 0$ and $\sum_{j=1}^n q_j(0) = 1$. However, we concentrate on the simple mixture of a Dirac mass at $0$ and one exponential random variable in this article, as this simple model exhibits the main features believed to hold for any DR model.
\end{remark}

In the next section, we give the phase diagram of the differential system \eqref{eqn:systDiff}, with a description of all stable and unstable equilibrium points, as well as their zone of attraction. In Section~\ref{subsec:asymptoticBehaviourSystDiff}, we refine the study by giving asymptotic equivalents for the solutions of \eqref{eqn:systDiff}. Finally, we use these results to prove in Section~\ref{subsec:proofDR} the DR conjecture for continuous models with exponential initial condition.

\subsection{Phase diagram of the model}

We first observe that some quantities are preserved under the dynamics \eqref{eqn:systDiff}.
\begin{lemma}
\label{lem:integrable}
The function $t \mapsto \frac{p(t)}{\lambda(t)} + \log \lambda(t)$ is a constant.
\end{lemma}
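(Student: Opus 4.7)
The plan is simply to differentiate $H(t) \coloneqq \frac{p(t)}{\lambda(t)} + \log \lambda(t)$ and verify that $H'(t) = 0$, using the system \eqref{eqn:systDiff}. Since Lemma~\ref{lem:systDiff} guarantees $\lambda(t) > 0$ for all $t \geq 0$, the function $H$ is well-defined and differentiable on $[0,\infty)$, so showing $H' \equiv 0$ suffices.

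The computation is a routine manipulation. Applying the quotient rule and chain rule yields
\[
H'(t) = \frac{p'\lambda - p\lambda'}{\lambda^2} + \frac{\lambda'}{\lambda} = \frac{p'}{\lambda} - \frac{p\lambda'}{\lambda^2} + \frac{\lambda'}{\lambda}.
\]
Substituting $p' = (1-p)(\lambda-p)$ and $\lambda' = -\lambda(1-p)$ from \eqref{eqn:systDiff}, the three terms become
\[
\frac{(1-p)(\lambda-p)}{\lambda}, \qquad \frac{p(1-p)}{\lambda}, \qquad -(1-p),
\]
and summing them gives $\frac{(1-p)\lambda}{\lambda} - (1-p) = 0$.

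There is no conceptual obstacle here; the only thing to justify is that one may differentiate $\log \lambda(t)$, which follows from the strict positivity of $\lambda$ noted in the proof of Lemma~\ref{lem:systDiff}. The lemma can equivalently be motivated by noting that the vector field $((1-p)(\lambda-p), -\lambda(1-p))$ driving \eqref{eqn:systDiff} is tangent to the level curves of $H$, i.e.\ $\nabla H \cdot (p', \lambda') = 0$, which is exactly the identity computed above.
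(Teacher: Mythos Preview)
Your proof is correct and is essentially identical to the paper's own argument: both compute $H'(t)$ directly via the quotient and chain rules, substitute the expressions for $p'$ and $\lambda'$ from \eqref{eqn:systDiff}, and observe that the result vanishes. The additional remarks you make about positivity of $\lambda$ and the tangency interpretation are sound but not needed beyond what the paper already establishes in Lemma~\ref{lem:systDiff}.
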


\begin{proof}
We compute the derivative of the function $H :t \mapsto  \frac{p(t)}{\lambda(t)} + \log \lambda(t)$, we have
\begin{align*}
    H'(t) &= \frac{p'(t)}{\lambda(t)} - \frac{p(t) \lambda'(t)}{\lambda(t)^2} + \frac{\lambda'(t)}{\lambda(t)}\\
    &= \frac{(1-p(t))(\lambda(t) - p(t))}{\lambda(t)} + \frac{p(t)\lambda(t) (1-p(t))}{\lambda(t)^2} - \frac{\lambda(t)(1-p(t))}{\lambda(t)} = 0,
\end{align*}
which concludes the proof.
\end{proof}

As a straightforward consequence of this remark, we deduce that $p$ can be written as an explicit function of $\lambda$, for all initial condition.
\begin{corollary}
\label{cor:differentialEquation}
Let $H \coloneqq \frac{p(0)}{\lambda(0)} + \log \lambda(0)$, for all $t \geq 0$, we have
\begin{equation} \label{eq:p_as_function_of_lambda}
p(t) = H \lambda(t) - \lambda(t) \log \lambda(t).
\end{equation}
In particular, $\lambda$ is the solution of the ordinary differential equation
\begin{equation}
  \label{eqn:diffEquation}
  \lambda' = - \lambda(1 - H \lambda + \lambda \log \lambda).
\end{equation}
\end{corollary}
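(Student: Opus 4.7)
The plan is to read off the corollary as a direct algebraic consequence of Lemma \ref{lem:integrable} combined with the second equation of the system \eqref{eqn:systDiff}. Since Lemma \ref{lem:integrable} already does the heavy lifting (it identifies the conserved quantity along the flow), what remains is purely mechanical, and I do not anticipate any genuine obstacle.

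First I would invoke Lemma \ref{lem:integrable}: the map $t \mapsto p(t)/\lambda(t) + \log \lambda(t)$ is constant, so for every $t \geq 0$ it equals its value at $t = 0$, namely $H = p(0)/\lambda(0) + \log \lambda(0)$. Since $\lambda(0) > 0$ and $\lambda$ is non-increasing and stays positive for all time (as observed in the proof of Lemma \ref{lem:systDiff}), the quotient $p(t)/\lambda(t)$ is well-defined for all $t \geq 0$, and solving the identity $p(t)/\lambda(t) + \log \lambda(t) = H$ for $p(t)$ immediately yields \eqref{eq:p_as_function_of_lambda}.

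Second, I would substitute this expression for $p(t)$ into the second line of \eqref{eqn:systDiff}, namely $\lambda' = -\lambda(1-p)$. Replacing $p$ by $H\lambda - \lambda \log \lambda$ gives $\lambda' = -\lambda(1 - H\lambda + \lambda \log \lambda)$, which is exactly \eqref{eqn:diffEquation}. This yields a scalar autonomous ODE for $\lambda$ alone, decoupling the two-dimensional system at the cost of parametrizing the integral curves by the constant of motion $H$. Since every step is an algebraic manipulation of identities already established, there is really no difficulty to overcome; the only care to take is verifying that $\lambda$ does not vanish in finite time, which is already guaranteed by the analysis in Lemma \ref{lem:systDiff}.
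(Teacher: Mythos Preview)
Your proof is correct and matches the paper's approach exactly: the paper states this corollary as ``a straightforward consequence'' of Lemma \ref{lem:integrable} without giving further details, and your argument spells out precisely the algebraic manipulation the paper leaves implicit.
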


Using Lemma \ref{lem:integrable}, we can draw the phase diagram of the differential system \eqref{eqn:systDiff}, obtain the asymptotic behavior of $(p,\lambda)$ as $t \to \infty$ and describe all the equilibrium points of \eqref{eqn:systDiff}, with their stability properties.

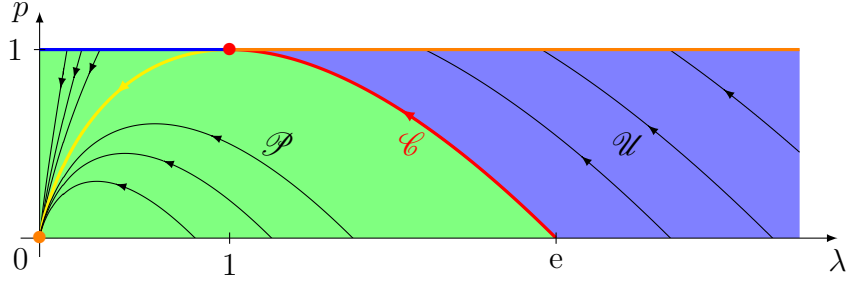
\begin{figure}[ht]
\centering
\begin{tikzpicture}[scale=2.5]
  \fill [color=green!50] (0,0) -- (0,1) -- (1,1) -- plot[domain = 1:2.71] (\x,{\x - \x*ln(\x)}) -- (2.71,0) -- cycle;
  \fill [color =blue!50] (1,1) -- plot[domain = 1:2.71] (\x,{\x - \x*ln(\x)}) -- (2.71,0) -- (4,0) -- (4,1) -- cycle;

  \draw [domain = 0.001:0.261,samples = 200] plot(\x,{2*\x-\x *ln(\x)});
  \draw [<-,>=latex,domain = 0.26:0.35,samples = 200] plot(\x,{2*\x-\x *ln(\x)});
  \draw [domain = 0.001:0.181,samples = 200] plot(\x,{3*\x-\x *ln(\x)});
  \draw [<-,>=latex,domain = 0.18:0.25,samples = 200] plot(\x,{3*\x-\x *ln(\x)});
  \draw [domain = 0.001:0.111,samples = 200] plot(\x,{5*\x-\x *ln(\x)});
  \draw [<-,>=latex,domain = 0.11:0.15,samples = 200] plot(\x,{5*\x-\x *ln(\x)});
  
  \draw [domain = 3.2:3.61,samples = 200] plot(\x,{1.5*\x-\x *ln(\x)});
  \draw [<-,>=latex,domain = 3.6:4,samples = 200] plot(\x,{1.5*\x-\x *ln(\x)});
  \draw [domain = 2.5:3.21,samples = 200] plot(\x,{1.35*\x-\x *ln(\x)});
  \draw [<-,>=latex,domain = 3.2:3.9,samples = 200] plot(\x,{1.35*\x-\x *ln(\x)});
  \draw [domain = 2:2.86,samples = 200] plot(\x,{1.2*\x-\x *ln(\x)});
  \draw [<-,>=latex,domain = 2.85:3.4,samples = 200] plot(\x,{1.2*\x-\x *ln(\x)});
  
  \draw [color=red, very thick, domain = 1: 2.71828, samples=200] plot (\x,{\x-\x*ln(\x)}); 
  \draw [<-,>=latex,thick,color=red,domain=1.9:2.71828,samples=200] plot (\x,{\x-\x*ln(\x)}); 
  \draw [color=yellow, very thick, domain = 0.001:1, samples=200] plot (\x,{\x-\x*ln(\x)});
  \draw [<-,>=latex,thick,color=yellow,domain=0.4:1,samples=200] plot (\x,{\x-\x*ln(\x)});  
  
  \draw [domain = 0.001:0.91,samples = 200] plot(\x,{0.5*\x-\x *ln(\x)});
  \draw [<-,>=latex,domain = 0.9:1.7,samples = 200] plot(\x,{0.5*\x-\x *ln(\x)});
  \draw [domain = 0.001:0.651,samples = 200] plot(\x,{0.2*\x-\x *ln(\x)});
  \draw [<-,>=latex,domain = 0.65:1.3,samples = 200] plot(\x,{0.2*\x-\x *ln(\x)});
  \draw [domain = 0.001:0.41,samples = 200] plot(\x,{-0.2*\x-\x *ln(\x)});
  \draw [<-,>=latex,domain = 0.4:0.9,samples = 200] plot(\x,{-0.2*\x-\x *ln(\x)});

  \fill[color=white] (0.1,1.001)--(3.5,1.001) -- (3.5,1.2) -- (0.1,1.2) -- cycle;
  \fill[color=white] (0.1,-0.001)--(4,-0.001) -- (4,-0.1) -- (0.1,-0.1) -- cycle;

  \draw [->,>=latex] (-0.1,0) -- (4.2,0) node[below] {$\lambda$};
  \draw (0,0) node[below left] {0};
  \draw [->,>=latex] (0,-0.1) -- (0,1.2) node[left] {$p$};
  \draw [very thick ,color=orange] (1,1) -- (4,1);
  \draw [very thick ,color=blue] (0,1) -- (1,1);
  \draw [color=orange] (0,0) node {$\bullet$};
  \draw [color=red] (1,1) node {$\bullet$};
  
  \draw (0.03,1) -- (-0.03,1) node[left]{$1$};
  \draw (1,0.03) -- (1,-0.03) node[below]{$1$};
  \draw (2.71828,0.03) -- (2.71828,-0.03) node[below]{$\mathrm{e}$};
  
  \draw (1+0.25, 0.5) node {$\mathscr{P}$};
  
  \draw[color=red] (1.96,0.5) node {$\mathscr{C}$};
  
  \draw (3.3-0.2,0.5) node{$\mathscr{U}$};
\end{tikzpicture}
\caption{The phase diagram of the differential system \eqref{eqn:systDiff}}
\label{fig:phaseDiagramPasIntro}
\end{figure}

Recall the three sets $ {\mathscr P},  {\mathscr C}$ and ${\mathscr U}$ defined in the introduction, respectively the supercritical, critical and subcritical zones, which we recall on Figure \ref{fig:phaseDiagramPasIntro}. We added on Figure \ref{fig:phaseDiagramIntro}, the trajectories of some solutions of the equation (in black). Some interesting additional sets are represented on that figure, such as the supercritical branch (in yellow) of the critical curve
\[
  \widehat{{\mathscr C}} = \left\{ (p,\lambda) \in [0,1) \times [0,1) : p = \lambda - \lambda \log \lambda \right\},
\]
as well as the attractive, critical and repulsive points of the dynamics (in orange, red and dark blue respectively):
\[
 {\mathscr A}_p = \{(0,0)\} \cup \left(\{1\} \times (1,\infty)\right), \quad {\mathscr C}_p = \{(1,1)\} \quad \text{and} \quad {\mathscr R}_p = \{1\} \times [0,1) .
\]

We now describe the asymptotic behavior of this DR model as $t \to \infty$.
\begin{proposition}
\label{thm:asympBehaviourIntro}
Let $(p,\lambda)$ be a solution of \eqref{eqn:systDiff} on $\R_+$. If $(p(0),\lambda(0)) \in [0,1] \times [0,\infty)$, then the domain of definition of $(p,\lambda)$ includes $[0,\infty)$, and moreover,
\begin{enumerate}
  \item if $(p(0),\lambda(0)) \in {\mathscr P}$, then $\lim_{t \to \infty} (p(t),\lambda(t)) = (0,0)$.
  \item if $(p(0),\lambda(0)) \in {\mathscr U}$, then $\lim_{t \to \infty} (p(t),\lambda(t)) = (1,x)$, where $x$ is the unique solution larger than 1 of the equation $Hx - x\log x = 1$.
  \item if $(p(0),\lambda(0)) \in {\mathscr C}$, then $\lim_{t \to \infty} (p(t),\lambda(t)) = (1,1)$.
\end{enumerate}
\end{proposition}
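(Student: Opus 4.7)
My plan is to exploit the conservation law of Lemma \ref{lem:integrable} to reduce the two-dimensional system \eqref{eqn:systDiff} to a scalar autonomous ODE in $\lambda$, and then perform an elementary phase-portrait analysis parametrized by the invariant $H$.

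I would first establish global existence and positivity: at $p=1$, $p'=0$, and at $p=0$, $p'=\lambda\ge 0$, so $p(t)\in[0,1]$ as long as the solution exists; then $\lambda'=-\lambda(1-p)\le 0$ confines $\lambda$ to $[0,\lambda(0)]$, so the trajectory lives in a compact set and is defined for all $t\ge 0$. The degenerate case $\lambda(0)=0$ gives $\lambda\equiv 0$ and $p'=-p(1-p)\to 0$, so $(p,\lambda)\to(0,0)\in\mathscr{P}$. Otherwise $\lambda>0$ for all $t$ (since $\{\lambda=0\}$ is invariant), and by Corollary \ref{cor:differentialEquation} the whole dynamics reduces to
\[
\lambda'=-\lambda\,g(\lambda),\qquad g(\lambda)\coloneqq 1-H\lambda+\lambda\log\lambda, \qquad p=H\lambda-\lambda\log\lambda.
\]

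Since $g'(\lambda)=\log\lambda+1-H$, the function $g$ attains its minimum at $\lambda^\star=e^{H-1}$ with value $1-e^{H-1}$, and $g(0^+)=1$, $g(+\infty)=+\infty$. This yields three regimes. \emph{(a) $H<1$:} $g>0$ on $(0,\infty)$, $\lambda$ decreases strictly, and a compactness argument (if $\lambda(t)\to\ell>0$ then $\lambda'(t)\to-\ell g(\ell)<0$, a contradiction) forces $\lambda\to 0$, hence $p\to 0$. \emph{(b) $H=1$:} $g\ge 0$ with zero only at $1$; so $\lambda(0)>1$ yields $\lambda\to 1$ and $p\to 1$, while $\lambda(0)<1$ yields $\lambda\to 0$ (it cannot cross the equilibrium) and $p\to 0$. \emph{(c) $H>1$:} $g(1)=1-H<0$, so $g$ has exactly two positive roots $x_1<1<x_2$; then $\lambda\to 0$ if $\lambda(0)\in(0,x_1)$, and $\lambda\to x_2$ if $\lambda(0)>x_1$ (monotone increase on $(x_1,x_2)$, monotone decrease on $(x_2,\infty)$), and using $g(x_2)=0$ the corresponding $p$-limit equals $1$.

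Finally, I would match the regions to these cases via the two identities that, for $\lambda>0$, $p=\lambda-\lambda\log\lambda\iff H=1$ and $p<1\iff g(\lambda)>0$. For $(p(0),\lambda(0))\in\mathscr{C}$ one gets $H=1$ and $\lambda(0)>1$, so case~(b) yields $(p,\lambda)\to(1,1)$. For $(p(0),\lambda(0))\in\mathscr{U}$ one has $H>1$ and $g(\lambda(0))>0$; combined with $\lambda(0)\ge 1>x_1$ this forces $\lambda(0)>x_2$, and case~(c) gives $\lambda\to x_2$, $p\to 1$, with $x_2$ the unique root $>1$ of $Hx-x\log x=1$ (uniqueness by strict convexity of $g$ on $(e^{H-1},\infty)$) claimed in the statement. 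For $(p(0),\lambda(0))\in\mathscr{P}$ with $\lambda(0)>1$ one has $H<1$, case~(a); for $\lambda(0)<1$ one falls into case~(a) or (b) if $H\le 1$, and if $H>1$ then $p<1$ forces $g(\lambda(0))>0$, which together with $\lambda(0)<1<x_2$ forces $\lambda(0)<x_1$, putting us back in the $\lambda\to 0$ branch of case~(c). In every sub-case of $\mathscr{P}$, $(p,\lambda)\to(0,0)$. The only delicate point I expect to grind through is precisely this translation step, in particular verifying in $\mathscr{U}$ that $p(0)<1$ combined with $\lambda(0)\ge 1$ rules out $\lambda(0)<x_1$ and forces $\lambda(0)>x_2$; apart from that the argument is routine.
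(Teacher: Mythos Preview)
Your proposal is correct and follows essentially the same approach as the paper: both exploit the conservation law (Lemma~\ref{lem:integrable} / Corollary~\ref{cor:differentialEquation}) to reduce the planar system to the scalar autonomous ODE $\lambda'=-\lambda g(\lambda)$ with $g(\lambda)=1-H\lambda+\lambda\log\lambda$, and then read off the limits from the zeros of $g$. Your write-up is simply more explicit than the paper's---you carry out the full convexity analysis of $g$ and the trichotomy on $H$, and you spell out the translation step (using $p<1\iff g(\lambda)>0$ to locate $\lambda(0)$ relative to the roots $x_1<1<x_2$ when $H>1$)---whereas the paper argues more tersely by listing the equilibrium points of the planar system and noting that the trajectory moves along the curve $x\mapsto Hx-x\log x$ with $\lambda$ non-increasing.
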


\begin{proof}
We first remark from \eqref{eqn:systDiff} that the function $\lambda$ is non-increasing. 
 Moreover, we observe the following behavior for the solutions of \eqref{eqn:systDiff} starting from the boundary of the domain:
\begin{itemize}
  \item if $p(0)=1$, then $(p(t), \lambda(t)) = (1,\lambda(0))$ for all $t \in \R$,
  \item if $\lambda(0)=0$ and $p(0) \in (0,1)$, then $(p(t),\lambda(t)) = (\frac{1}{(p(0)^{-1}-1)\e^t +1},0)$ for all $t \in \R$,
  \item if $\lambda(0)=p(0)=0$ then $(p(t),\lambda(t)) = (0,0)$ for all $t \in \R$,
  \item if $\lambda(0)>0$ and $p(0) = 0$, then for all $h>0$ small enough, we have $(p(h),\lambda(h)) \in (0,1) \times (0,\infty)$.
\end{itemize}
The possible values of $\lim_{t \to \infty} (p(t),\lambda(t))$ then belong to the set of equilibrium points of the dynamics, defined by the system of equation
\[
\addtolength\arraycolsep{-0.125cm}
  \left\{
  \begin{array}{rl}
    0 &= (1-p)( \lambda - p)\\
    0 &= - \lambda (1-p),
  \end{array}
  \right.
\]
which is exactly ${\mathscr A}_p \cup {\mathscr C}_p \cup {\mathscr R}_p$. Moreover, recall that $(p(t),\lambda(t))$ move along the trajectory of $x \mapsto Hx - x \log x$, as $\lambda(t)$ decreases.

Hence, if $(p(0),\lambda(0)) \in {\mathscr P}$, then either $H < 1$, and the only equilibrium point belonging to $x \mapsto Hx - x \log x$ is $(0,0)$, or $H>1$ and this is the only equilibrium point left of the starting point. 
If $(p(0),\lambda(0))$ in ${\mathscr U}$, the first equilibrium point met by the trajectory is the one described in the second point.
Finally, if $(p(0),\lambda(0)) \in {\mathscr C}$, then the trajectory of $(p,\lambda)$ would go through the critical point $(1,1)$ which is impossible, proving the third case. 
\end{proof}

\begin{remark}
Note that $(p(0),\lambda(0)) \in [0,1] \times [0,1]$ and $H\geq 1$ is a necessary and sufficient condition for the existence of a meaningful solution of \eqref{eqn:diffEquation} defined on~$\R$. From any other starting position, there exists a finite $t \in (-\infty,0)$ such that $p(t) < 0$, therefore the DR model cannot be extended as a process on~$\R$ in general.
\end{remark}

\subsection{Asymptotic behavior of the solutions}
\label{subsec:asymptoticBehaviourSystDiff}

In this section, we study the asymptotic behavior of the solution $(p(t), \lambda(t))$ as $t \to \infty$, in the three different phases.    In the critical case, this corresponds to Conjectures 1.3 and 1.4 of Chen et al.\@  \cite{CDHLS17} in the discrete setting.
 Using the properties of the differential system, we were able to study the asymptotic behavior in the critical case beyond the computation of an equivalent. 
\begin{proposition} \label{prop:asymptotics_of_p_and_lambda}
Let $(p,\lambda)$ be a solution of \eqref{eqn:systDiff}.
\begin{enumerate}
\item If $(p(0),\lambda(0)) \in {\mathscr P}$ with $\lambda(0) > 0$, then there exists $K > 0$ such that, as $t \to \infty$,
\[
  \lambda(t) \sim K \e^{-t} 
  \quad \text{and} \quad 
  p(t) \sim Kt\e^{-t}.
\] 
\item If $(p(0),\lambda(0)) \in {\mathscr U}$, then there exists $K>0$ such that, as $t \to \infty$,
\[
  \lambda(t) - x \sim K \e^{-(x-1)t} 
  \quad \text{and} \quad 
  1-p(t) \sim \left( 1 - \frac{1}{x} \right) K \e^{-(x-1)t},
\]
where $x$ is the unique solution larger than 1 of the equation $Hx - x\log x = 1$.
\item If $(p(0),\lambda(0)) \in {\mathscr C}$, then we have, as $t \to \infty$,
\[
  \lambda(t) = 1 + \frac{2}{t} - \frac{8 \log t}{3 t^2} + o\left( \frac{\log t}{t^2} \right) 
  \quad \text{and} \quad
  p(t) = 1 - \frac{2}{t^2} + \frac{16 \log t}{3 t^3} + o\left( \frac{\log t}{t^3} \right).
\]
\end{enumerate}
\end{proposition}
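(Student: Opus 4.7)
The plan is to exploit the conservation law of Lemma~\ref{lem:integrable} (equivalently Corollary~\ref{cor:differentialEquation}) to reduce the two-dimensional system \eqref{eqn:systDiff} to the scalar ODE
\[
  \lambda' = -\lambda \bigl( 1 - H\lambda + \lambda \log \lambda \bigr), \qquad p = H\lambda - \lambda \log \lambda,
\]
and to note the convenient identity $1-p = 1 - H\lambda + \lambda\log\lambda$, so that the right-hand side is $-\lambda(1-p)$. Proposition~\ref{thm:asympBehaviourIntro} already tells us the limit of $\lambda$ in each phase; what remains in each case is an asymptotic expansion of $\lambda(t)$ near the appropriate equilibrium, from which the asymptotics of $p$ follow by direct substitution.

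In the subcritical case, the limit $\lambda(\infty) = x > 1$ is a simple zero of $f(\lambda) \coloneqq 1 - H\lambda + \lambda\log\lambda$, and using $H = 1/x + \log x$ one computes $f'(x) = -H + \log x + 1 = 1 - 1/x > 0$. The linearization of $\lambda' = -\lambda f(\lambda)$ around $x$ then yields $\lambda(t) - x \sim K \mathrm{e}^{-(x-1)t}$, and the equivalent for $1-p$ comes from $1-p = f(\lambda) \sim f'(x)(\lambda-x)$. For the supercritical case, I would substitute $\mu \coloneqq -\log \lambda$, so that $\mu' = 1 - \mathrm{e}^{-\mu}(H+\mu)$. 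Since $\lambda \to 0$ gives $\mu \to \infty$ and $\mu' \to 1$, a first integration yields $\mu(t) \sim t$; feeding this back makes $\mathrm{e}^{-\mu}(H+\mu) = O(t \mathrm{e}^{-t})$ integrable, so $\mu(t) - t$ converges to some $L \in \R$ and $\lambda(t) \sim \mathrm{e}^{-L}\mathrm{e}^{-t}$. The equivalent for $p = \lambda(H+\mu) \sim Kt\mathrm{e}^{-t}$ is then immediate.

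The critical case is the main obstacle, because the equilibrium $(1,1)$ is degenerate (the scalar ODE has a double zero at $\lambda = 1$ when $H=1$) and the expansions must be pushed beyond leading order. Setting $u \coloneqq \lambda - 1 > 0$, a Taylor expansion using $H = 1$ gives $1 - \lambda + \lambda \log \lambda = u^2/2 - u^3/6 + O(u^4)$, hence
\[
  u' = -(1+u)\bigl( u^2/2 - u^3/6 + O(u^4) \bigr) = -\tfrac{u^2}{2} - \tfrac{u^3}{3} + O(u^4).
\]
The change of variable $v \coloneqq 1/u$ converts this into $v' = 1/2 + 1/(3v) + O(1/v^2)$, and the strategy is a two-step bootstrap: the leading term yields $v \sim t/2$, so substituting $v = t/2 + w$ with $w = o(t)$ produces $w' = 2/(3t) + O(\log t/t^2)$ provided $w = O(\log t)$, and integration gives $w = (2/3)\log t + O(1)$, consistent with the assumption. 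Inverting then yields
\[
  \lambda(t) = 1 + \frac{2}{t} - \frac{8\log t}{3t^2} + o\!\left(\frac{\log t}{t^2}\right),
\]
after which the expansion of $p$ is obtained by injecting this into $1-p = u^2/2 - u^3/6 + O(u^4)$ and keeping the two leading terms. The delicate point is to justify the bootstrap rigorously: one must verify at each stage that the presumed order of the remainder is consistent with the ODE and that the error contributions after integration are genuinely of order $o(\log t / t^2)$, so that they do not pollute the explicit $\log t$ term in the final expansion.
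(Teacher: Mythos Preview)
Your proposal is correct and follows essentially the same route as the paper: reduce to the scalar ODE for $\lambda$ via the conserved quantity, linearize near the equilibrium in the subcritical case, show $\log\lambda(t)+t$ has an integrable derivative in the supercritical case, and in the critical case expand $u=\lambda-1$ to get $u'=-u^2/2-u^3/3+O(u^4)$ and bootstrap on $1/u$. The only differences are cosmetic (your $\mu=-\log\lambda$ versus the paper's direct manipulation of $\lambda'/\lambda$; your explicit substitution $v=t/2+w$ versus the paper's computation of $(1/u)'-1/2$), and your acknowledged need to close the bootstrap is exactly what the paper does in two successive integrations.
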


\begin{proof}
Let start with the supercritical case $(p(0),\lambda(0)) \in {\mathscr P}$. Recall from Theorem \ref{thm:asympBehaviour} that $\lambda(t) \to 0$. Using \eqref{eqn:diffEquation}, we get that $\lambda'(t)/\lambda(t) \to -1$ and therefore 
\[
  \log \lambda(t) - \log \lambda(0) 
  = \int_0^t \frac{\lambda'(s)}{\lambda(s)} \diff s 
  \sim -t,
\]
hence $\lambda(t) = \e^{-t + o(t)}$. Using again \eqref{eqn:diffEquation}, we get that 
\[
\frac{\lambda'(t)}{\lambda(t)} + 1 = - \lambda(t) \log \lambda(t) (1+o(1)) = -\e^{-t + o(t)},
\]
which is integrable at infinity. Therefore, it follows that
\[
\log \lambda(t) - \log \lambda(0) + t 
= \int_0^t \left( \frac{\lambda'(s)}{\lambda(s)} + 1 \right) \diff s 
\xrightarrow[t\to\infty]{} 
\int_0^\infty \left( \frac{\lambda'(s)}{\lambda(s)} + 1 \right) \diff s,
\]
where the last integral is finite. 
This proves that $\lambda(t) \sim K \e^{-t} $ for some constant $K > 0$. 
Then, the behavior of $p(t)$ follows directly from \eqref{eq:p_as_function_of_lambda}.

We now deal with the subcritical case $(p(0),\lambda(0)) \in {\mathscr U}$. 
Recall from Theorem \ref{thm:asympBehaviour} that $\lambda(t) \to x$, where $x > 1$ satisfies $Hx - x \log x = 1$. 
We introduce the auxiliary function $u(t) \coloneqq \lambda(t) - x$, which is positive for all $t \in \R_+$.
Using \eqref{eqn:diffEquation}, we get
\begin{align*}
u' 
& = -(x+u) (1 - H(x+u)+ (x+u) \log(x+u)) \\
& = -(x+u) \left(-Hu + x \log \left( \frac{x+u}{x} \right) + u\log(x+u) \right),
\end{align*}
using that $Hx - x \log x = 1$.
Since $u(t) \to 0$, we get the following asymptotic development
\begin{equation} \label{nl}
u'(t) 
= -x (-H+1+\log x) u(t) + O(u(t)^2)
= -(x-1) u(t) + O(u(t)^2).
\end{equation}
Proceeding as in the supercritical case, we first deduce from \eqref{nl} that $u(t) = \e^{-(x-1)t + o(t)}$ and, then, that $u(t) \sim K \e^{-(x-1)t}$.
Plugging this in \eqref{eq:p_as_function_of_lambda}, the behavior of $p(t)$ follows.

Finally, we deal with the critical case $(p(0),\lambda(0)) \in {\mathscr C}$.
Recall from Theorem \ref{thm:asympBehaviour} that $\lambda(t) \to 1$, so we introduce the auxiliary function $v(t) \coloneqq \lambda(t) - 1$, which is positive for all $t \in \R_+$.
Using \eqref{eqn:diffEquation} with here $H=1$, we get
\[
v' = -(1+v) (1 - (1+v)+ (1+v) \log(1+v)),
\]
and, using that $v(t) \to 0$, we get
\begin{align}
v'(t) = -\frac{v(t)^2}{2} -\frac{v(t)^3}{3} + O(v(t)^4). \label{nm}
\end{align}
First, we deduce from \eqref{nm} that $(1/v)'(t) \to 1/2$ as $t \to \infty$ and therefore,
\[
  \left( \frac{1}{v(t)} - \frac{1}{v(0)} \right)
  = \int_0^t \left( \frac{1}{v} \right)'(s) \diff s
 \sim \frac{t}{2}.
\]
It proves that $v(t) \sim 2/t$ as $t \to \infty$.
Then, using again \eqref{nm}, we have 
\[
  \left( \frac{1}{v} \right)'(t) - \frac{1}{2}
 = \frac{v(t)}{3} + o(v(t))
  \sim \frac{2}{3t},
\]
as $t\to\infty$, and it follows that
\[
\int_1^t \left( \left( \frac{1}{v} \right)'(s) - \frac{1}{2} \right) \diff s
\sim \int_1^t \frac{2}{3s} \diff s = \frac{2}{3} \log t,
\]
and, finally,
\begin{equation} \label{nn}
v(t) = \left( \frac{t}{2} + \frac{2}{3} \log t + o(\log t) \right)^{-1}
= \frac{2}{t} - \frac{8 \log t}{3 t^2} + o\left( \frac{\log t}{t^2} \right).
\end{equation}
Then, it follows from \eqref{eq:p_as_function_of_lambda} that 
\[
  p(t) = (1+v(t)) - (1+v(t)) \log (1+v(t)) = 1 - \frac{v(t)^2}{2} + O(v(t)^3),
\]
and plugging \eqref{nn} in this equation gives the desired result.
\end{proof}

\begin{proof}[Proof of Theorem \ref{thm:asympBehaviour}]
Recalling from \eqref{eqn:mut} that 
\[\mu_t(\diff x) = p(t) \delta_0(\diff x) + (1 -p(t)) \lambda(t) \e^{-\lambda(t) x} \diff x.\]
Note  that $\int_{\R_+} x \mu_t(\diff x) = \frac{1-p(t)}{\lambda(t)}$. Theorem \ref{thm:asympBehaviour} is now an immediate consequence of  the asymptotic behavior of $(p(t), \lambda(t))$ stated in Proposition \ref{prop:asymptotics_of_p_and_lambda}. 
\end{proof}

\subsection{Proof of the Derrida--Retaux conjectures}
\label{subsec:proofDR}

Using the fact that the DR model we are considering consists in a mixture of exponential random variables and Dirac masses at $0$ and is integrable, we obtain a straightforward proof of the DR conjecture for this model, which states that the free energy has an infinite order phase transition between the supercritical and the subcritical phases. We begin by observing that if we write
\[
  \mu_0(\diff x) 
  = p(0) \delta_0(\diff x) + (1 - p(0)) \lambda(0) \e^{-\lambda(0)x} \diff x,
\]
then the free energy can be rewritten
\begin{equation}
  \label{eqn:alternateExpression}
  F_\infty(\mu_0) = F_\infty(p(0),\lambda(0)) = \lim_{t \to \infty} \e^{-t} \frac{1-p(t)}{\lambda(t)}.
\end{equation}
\begin{remark}
The proof of Theorem \ref{thm:DRConjectureSimple} is crucially based on Corollary \ref{cor:differentialEquation}, which states that $\lambda$ satisfies an ordinary differential equation. 
In the case $\lambda \in [1,e)$, note that as $p(0)$ approaches $p_c$, the trajectory followed by $(p,\lambda)$ approaches closer and closer to the critical point $(1,1)$ (see Figure \ref{fig:phaseDiagramPasIntro}). Moreover, trajectories typically stay a long time around this critical point, as both $p'$ and $\lambda'$ become small (of order $(1-p)$). The main asymptotic behavior of the free energy turns out to be driven by the amount of time ``lost'' by the trajectory $(p,\lambda)$ in the neighborhood of $(1,1)$ before eventually converging toward $(0,0)$.
\end{remark}
\begin{proof}[Proof of Theorem \ref{thm:DRConjectureSimple}] 
We fix some $\lambda \in (0,e)$ and set $p_c \coloneqq \lambda - \lambda \log \lambda$ if $\lambda > 1$ and, by extension, $p_c \coloneqq 1$ if $\lambda \leq 1$.
We work with $p$ such that $(p,\lambda) \in {\mathscr P}$, which means exactly $p < p_c$.
Let $(p,\lambda)$ be a solution to \eqref{eqn:systDiff} with initial conditions $\lambda(0) = \lambda$ and $p(0) = p$.
Recall that setting $H \coloneqq \frac{p}{\lambda} + \log \lambda$, the function $\lambda$ is solution to the differential equation
\[
  \forall t \geq 0, \quad 
  \lambda'(t) = - \lambda(t) ( 1 - H\lambda(t) + \lambda(t) \log \lambda(t)),
\]
which can be rewritten as: for all $t \geq 0$, we have
\[
  \int_{\lambda(t)}^{\lambda(0)} \frac{\diff y}{y(1 - Hy + y\log y)} = t.
\]
We take interest in the asymptotic behavior, as $x \to 0$ of the function
\[
  G_H \colon x \mapsto \int_x^{\lambda} \frac{\diff y}{y (1 - H y + y \log y)}.
\]
Indeed, since $(p,\lambda) \in {\mathscr P}$, we have $p(t) \to 0$ and $\lambda(t) \to 0$ by Theorem \ref{thm:asympBehaviour} and it follows from \eqref{eqn:alternateExpression} that
\begin{align} \label{ol}
F_\infty(p,\lambda) = \lim_{t \to \infty} \frac{\e^{-t}}{\lambda(t)} = \exp\left( - \lim_{x \to 0} (G_H(x) + \log x )\right),
\end{align}
by composition of limits. 
Thus, in order to prove the result, it is enough to compute the asymptotic behavior, as $p \to p_c$ or equivalently $H \to H_c \coloneqq \frac{p_c}{\lambda} + \log \lambda$, of
\begin{align}
  \lim_{x \to 0} \left( G_H(x) + \log x\right) 
  & = \log \lambda + \int_0^{\lambda} \left( \frac{1}{y (1 - Hy + y \log y)} - \frac{1}{y} \right) \diff y \nonumber \\
  & = \log \lambda + \int_0^{\lambda} \frac{H - \log y}{1 - Hy + y \log y} \diff y, \label{om}
\end{align}
in the three different cases $\lambda \in (1,e)$, $\lambda = 1$ and $\lambda \in (0,1)$.

We start with the case $\lambda \in [1,e)$ (we will distinguish later between $\lambda = 1$ and $\lambda \neq 1$), in which case $H_c =1$. We first observe that
\begin{align*}
C_1 \coloneqq \lim_{H \to 1} \int_0^{\lambda} \left( \frac{H - \log y}{1 - Hy + y \log y} - \frac{H+1-y}{(1-H) y + \frac{(y-1)^2}{2} - \frac{(y-1)^3}{6}} \right) \diff y
\end{align*}
exists and is finite, by dominated convergence\footnote{Note that the main issue to be addressed is the divergence at $1$ as $(y-1)^{-2}$ in the limit. For this, we replaced the $\log y$ in the numerator by $y-1$ and the $1 + y\log y$ in the denominator by its Taylor expansion $1+ (y-1) + \frac{(y-1)^2}{2} - \frac{(y-1)^3}{6}$. For the domination, one can use the triangle inequality, in order to make one replacement after the other, and then deal with them separately, using Taylor--Lagrange inequality.}. Setting $\delta \coloneqq 1-H$, it follows that
\begin{equation} \label{on}
  \lim_{x \to 0} \left( G_H(x) + \log x \right) 
    = \log \lambda + C_1
	+ \int_0^{\lambda} \frac{2-\delta-y}{\delta y + \frac{(y-1)^2}{2} - \frac{(y-1)^3}{6}} \diff y
	+o(1),
\end{equation}
where the $o(1)$ holds as $\delta \to 0$, or equivalently as $p \to p_c$. 
In order to compute the integral, we need to compute the partial fraction decomposition of the rational function in the integral.
The polynomial $\delta y + \frac{(y-1)^2}{2} + \frac{(y-1)^3}{6}$ has a real root $x_0$ and two complex conjugate roots $x_1 + i y_1$ and $x_1 - i y_1$.
These roots are explicit and one can check that $x_0 = 4 + O(\delta)$, $x_1 = 1 - O(\delta)$ and $y_1 = \sqrt{2 \delta} + O(\delta)$. 
Then, we have the following decomposition 
\begin{align*}
\frac{2-\delta-y}{\delta y + \frac{(y-1)^2}{2} - \frac{(y-1)^3}{6}}
= \frac{a}{y-x_0} + \frac{b-ay}{(y-x_1)^2+y_1^2},
\end{align*}
with $a = \frac{4}{3} + O(\delta)$ and $b = \frac{10}{3} + O(\delta)$. 
Therefore, we can finally compute
\begin{align*}
  & \int_0^{\lambda} \frac{2-\delta-y}{\delta y + \frac{(y-1)^2}{2} - \frac{(y-1)^3}{6}} \diff y \\
  & = a \left( \ln(x_0-\lambda) - \ln(x_0) \right)
	+ \frac{b-ax_1}{y_1} \left( 
		\arctan \left( \frac{\lambda-x_1}{y_1} \right)
		- \arctan \left( \frac{-x_1}{y_1} \right) \right) \\
  & \hphantom{={}} {}
	- \frac{a}{2} \left(
		\ln \left( (\lambda-x_1)^2 + y_1^2 \right)
		- \ln \left( x_1^2 + y_1^2 \right) \right).
\end{align*}
From now on, we distinguish $\lambda > 1$ and $\lambda = 1$.
We start with $\lambda > 1$: using the development of the different constants and that $\arctan(t) = \frac{\pi}{2} + O(\frac{1}{t})$ as $t \to \infty$, we get
\begin{align*}
\int_0^{\lambda} \frac{2-\delta-y}{\delta y + \frac{(y-1)^2}{2} - \frac{(y-1)^3}{6}} \diff y 
& =  \pi \sqrt{\frac{2}{\delta}} + C_2 + o(1),
\end{align*}
where $C_2$ is a constant depending on $\lambda$.
Recalling that $\delta = 1 - \frac{p}{\lambda} - \log \lambda = \frac{p_c - p}{\lambda}$, coming back to \eqref{ol}, \eqref{om} and \eqref{on}, we get
\[
F_\infty(p,\lambda) 
= \exp\left( - \pi \sqrt{\frac{2 \lambda}{p_c-p}} - \log \lambda - C_1 - C_2 + o(1) \right)
\]
and it concludes the proof of the first point.

We now consider the case $\lambda = 1$. In that case $\lambda -x_1 = O(\delta)$, so the development of the integral is slightly different:
\begin{equation*}
\int_0^{\lambda} \frac{2-\delta-y}{\delta y + \frac{(y-1)^2}{2} - \frac{(y-1)^3}{6}} \diff y 
 = \frac{\pi}{\sqrt{2\delta}} - \frac{2}{3} \ln \delta + C_3 + o(1),
\end{equation*}
and, in the same way,
\[
F_\infty(p,\lambda) 
= \exp\left( - \frac{\pi}{\sqrt{2 (p_c-p)}} + \frac{2}{3} \ln(p_c-p) - \log \lambda - C_1 - C_3 + o(1) \right),
\]
which proves the second point.

Finally, we deal with $\lambda \in (0,1)$ and, therefore, $p_c = 1$. 
Recall from \eqref{ol} and \eqref{om}, that our aim is to compute the asymptotic behavior, as $p \to 1$, of
\begin{align*} 
\int_0^{\lambda} \frac{H - \log y}{1 - Hy + y \log y} \diff y
&= \int_0^{\lambda} 
	\frac{p - \lambda \log \frac{y}{\lambda}}
		{\lambda - p y + \lambda y \log \frac{y}{\lambda}} 
	\diff y,
\end{align*}
where we used that $H = \frac{p}{\lambda} + \log \lambda$. Note that
\begin{align*}
C_4 \coloneqq \lim_{p \to 1} 
\int_0^{\lambda} \left(
	\frac{p - \lambda \log \frac{y}{\lambda}}
		{\lambda - p y + \lambda y \log \frac{y}{\lambda}} 
	- \frac{p}{\lambda (1-\lambda) - y (p- \lambda)} 
	\right) \diff y
\end{align*}
exists and is finite, by dominated convergence\footnote{Here, the issue is the divergence at $\lambda$ as $(y-\lambda)^{-1}$ in the limit and, therefore, we replaced the $\log \frac{y}{\lambda}$ in the numerator by $0$ and the $y\log \frac{y}{\lambda}$ in the denumerator by $\lambda (\frac{y}{\lambda} - 1)$.}. Then, we compute
\begin{align*}
\int_0^{\lambda} 
	\frac{p}{\lambda (1-\lambda) - y (p- \lambda)} \diff y
& = - \frac{p}{p-\lambda} \log \left( \frac{1-p}{1-\lambda} \right),
\end{align*}
and conclude that 
\[
F_\infty(p,\lambda) 
= \exp \left( 
\frac{1}{1-\lambda} \log \left( \frac{1-p}{1-\lambda} \right)
-\log \lambda -C_4 + o(1)
\right),
\]
which proves the third point.
\end{proof}

\section{Critical Derrida--Retaux tree conditioned on survival}\label{s:criticalDR}

As observed in the previous section, a particular trajectory in the DR model exhibits some interesting features. It is the critical line, followed by a DR model with initial law $\mu_0$ given by  
\[
  \mu_0(\diff x) = p(0) \delta_0 + (1-p(0)) \lambda(0) \e^{-\lambda(0) x} \diff x.
\]
with $(p(0), \lambda(0)) \in {\mathscr C}$. From this starting condition, the law $\mu_t$ is at all times $t >0$ given by a mixture of an exponential random variable and a Dirac mass at $0$. 

In this section, we consider a DR tree $X^t=(X^t_s(u), s \in [0,t], u \in \mathcal{N}_s)$ with initial law $\mu_0$, as defined in Section~\ref{subsec:Yule}, recalling that $\mathcal{N}_s$ is the set of particles of the Yule tree alive at time $s$ and $X^t_s(u)$ the amount of paint remaining at level $s$ in the branch $u$. 
The main objective of the section is to construct, describe and analyze the law of the portion of the Yule tree covered in paint, conditionally on the event that there is ``paint'' remaining at the origin. The conditioned tree turns out to behave as a time-inhomogeneous growth-fragmentation process, which converges to some limiting process. It is believed that the limiting tree is an universal characteristic, and would appear in numerous models related to DR at criticality. We have some anecdotal evidences of this universality, based on simulations. In Section \ref{section:asymptotic_mass_leaves}, we take particular interest in the asymptotic behavior of the number and the mass on the leaves of that tree.

On the event $\{ X^t_0(\varnothing) > 0 \}$, we say that a particle $u \in \mathcal{N}_s$ for $s \leq t$ is \textit{red} if it belongs to the painted connected component of the root: more formally, for any $r \leq s$ and $v \in \mathcal{N}_r$ such that $v \leq u$, $X_r^t(v) > 0$. 
Let $(\mathfrak{X}^t_s(v), v \in \mathfrak{N}_s^t, s \leq t)$ denote the restriction of the process $X^t$ to the painted connected component of the root. 
Note that the labeling of particles is modified between the processes $X^t$ and $\mathfrak{X}^t$.
Indeed, if at the first branching time of $X^t$, all the paint comes from one of the children, the process $\mathfrak{X}^t$ does not ``see'' this branching event, hence keep calling the line $\varnothing$.
The reindexed portion of the Yule tree, denoted by $(\mathfrak{N}_s^t, s \leq t)$, is called the \textit{red tree} (see Figure \ref{fig:construction_red_tree}); note that it depends on $t$, unlike the Yule tree used to define $X^t$.
By analogy with the definition of the Yule tree in Subsection \ref{subsec:Yule}, we denote by $\mathfrak{e}_u^t$, $\mathfrak{b}_u^t$ and $\mathfrak{d}_u^t$ respectively the lifetime, birth time and death time of $u$ in the red tree $(\mathfrak{N}_s^t, s \leq t)$.
By convention, all particles die at time $t$ and all their offspring have zero lifetime.

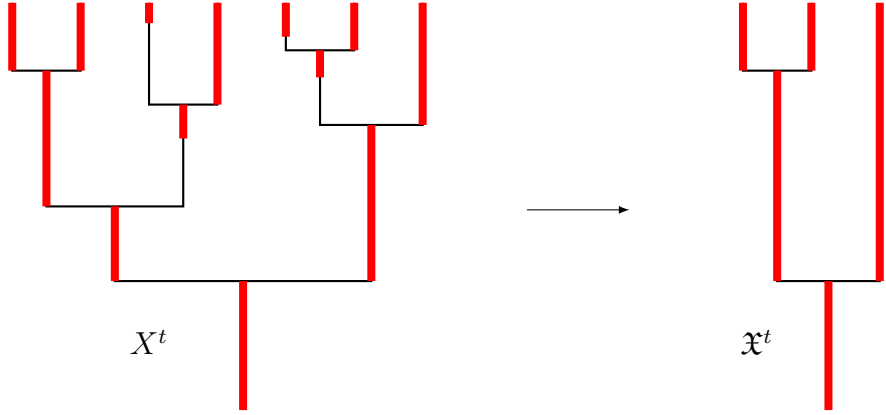
\begin{figure}[ht]
\centering
\subfigure{
\begin{tikzpicture}[scale=0.9]
  \draw [thick] (0,5) -- (0,4) -- (1,4) -- (1,5);
  \draw [thick] (2,5) -- (2,3.5) -- (3,3.5) -- (3,5);
  \draw [thick] (4,5) -- (4,4.3) -- (5,4.3) -- (5,5);
  \draw [thick] (6,5) -- (6,3.2) -- (4.5,3.2) -- (4.5,4.3);
  \draw [thick] (2.5,3.5) -- (2.5,2) -- (0.5,2) -- (0.5,4);
  \draw [thick] (5.25, 3.2) -- (5.25,0.9) -- (1.5,0.9) -- (1.5,2);
  \draw [thick] (3.375,.9) -- (3.375,-1);

  \draw [line width=3,color=red] (0,5) -- (0,4) ++ (1,0) -- (1,5);
  \draw [line width=3,color=red] (2,5) -- ++ (0,-0.3);
  \draw [line width=3,color=red] (3,5) -- ++ (0,-1.5);
  \draw [line width=3,color=red] (2.5,3.5) -- (2.5,3);
  \draw [line width=3,color=red] (4,5) -- ++ (0,-0.5);
  \draw [line width=3,color=red] (5,4.3) -- (5,5);
  \draw [line width=3,color=red] (6,5) -- (6,3.2);
  \draw [line width=3,color=red] (4.5,3.9) -- (4.5,4.3);
  \draw [line width=3,color=red] (0.5,2) -- (0.5,4);
  \draw [line width=3,color=red] (5.25, 3.2) -- (5.25,0.9) ++ (-3.75,0) -- (1.5,2);
  \draw [line width=3,color=red] (3.375,.9) -- (3.375,-1);
  
  \draw (2,0) node{$X^t$};
\end{tikzpicture}
}
\subfigure{
\begin{tikzpicture}[scale=0.9]
\draw[white] (0,-1) -- (0,5);
\draw[white] (3.5,-1) -- (3.5,5);
\draw[->,>=latex] (1,2) -- (2.5,2);
\end{tikzpicture}
}
\subfigure{
\begin{tikzpicture}[scale=0.9]
  \draw [thick] (1,5) -- (1,4) -- (2,4) -- (2,5);
  \draw [thick] (3, 3.2) -- (3,0.9) -- (1.5,0.9) -- (1.5,4);
  \draw [thick] (2.25,.9) -- (2.25,-1);

  \draw [line width=3,color=red] (1,5) -- (1,4);
  \draw [line width=3,color=red] (2,4) -- (2,5);
  \draw [line width=3,color=red] (3,5) -- (3,0.9);
  \draw [line width=3,color=red] (1.5,0.9) -- (1.5,4);
  \draw [line width=3,color=red] (2.25,.9) -- (2.25,-1);
  
  \draw (1.2,0) node{$\mathfrak{X}^t$};
\end{tikzpicture}
}
\caption{Illustration of the construction of the red tree, by pruning the Yule tree at points where there is no paint.} \label{fig:construction_red_tree}
\end{figure}

\subsection{The red tree as a Markovian branching process}

The purpose of this subsection is the description of the law of $(\mathfrak{X}^t_s(v), v \in \mathfrak{N}_s^t, s \leq t)$ conditioned by $X^t_0(\varnothing) = x$ for some $x > 0$.
However, the event $\{ X^t_0(\varnothing) = x \}$ has zero probability, so the question is not well-posed.
But, we give a particular description, as a Markovian branching process, of the law of $\mathfrak{X}^t$ given $X^t_0(\varnothing)$, such that, for this description, it makes sense to consider the process ``starting from $x$''.

Let $(p(t),\lambda(t), t \geq 0)$ be the solution of \eqref{eqn:systDiff} starting from initial condition $p(0)=p$, $\lambda(0)=\lambda$ for some $(p,\lambda) \in [0,1) \times (0,\infty)$. 
Note that in this section, we \textit{do not} restrict ourselves to the critical case.
We introduce the auxiliary function
\[
  \rho(s) \coloneqq (1 - p(s)) \lambda(s), \quad s \geq 0,
\]
which will appear regularly in the sequel.
\begin{proposition} \label{prop:law_of_the_red_tree}
Assume that the initial law $\mu_0$ is given by  \eqref{eqn:initialDistribution} with  $(p(0), \lambda(0)) \in (0,1) \times (0,\infty)$.
Given $X^t_0(\varnothing)$, on the event $\{ X^t_0(\varnothing) > 0 \}$, $\mathfrak{X}^t$ has the law of a time-inhomogeneous branching Markov process, which can be described as follows:
\begin{enumerate}
  \item The process starts at time $0$ with a unique particle of mass $\mathfrak{X}^t_0(\varnothing) = X^t_0(\varnothing)$.
  \item The mass associated to each particle grows linearly at speed $1$.
  \item A particle of mass $m$ at time $s$ splits at rate $\rho(t-s) m$ into two children, the mass $m$ being split uniformly between the two children.
  \item Particles behave independently after their splitting time.
\end{enumerate}
\end{proposition}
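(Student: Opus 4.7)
My plan is to first establish that, conditionally on $X^t_0(\varnothing)$, the process $\mathfrak{X}^t$ has a time-inhomogeneous Markov branching structure, and then to identify its dynamics by computing the instantaneous rate of the first red-branching for an arbitrary starting mass. The two key ingredients are the branching property of the Yule tree and the explicit form $\mu_\tau(\diff y) = p(\tau) \delta_0(\diff y) + (1-p(\tau)) \lambda(\tau) \e^{-\lambda(\tau) y} \diff y$ provided by Lemma~\ref{lem:systDiff}.

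The Markov branching structure will be a direct consequence of the branching property of the Yule tree: the subtree rooted at any Yule particle $v \in \calN_s$ is an independent Yule tree of height $t - s$ decorated with i.i.d.\@ paints of law $\mu_0$, so conditionally on $X^t_s(v) = m > 0$ for a red particle $v$, the continuation of $\mathfrak{X}^t$ within this subtree has the law of the red subtree of a DR tree of height $t-s$ conditioned on having root mass $m$, a distribution that depends only on the pair $(t-s, m)$. The independence of the two offspring after a red-branching follows from the branching property in the same way.

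It then suffices to show that, for every $\tau > 0$ and $m > 0$, a red tree of height $\tau$ starting from mass $m$ has its first red-branching happening at instantaneous rate $\rho(\tau) m$ with a uniform split. Using Lemma~\ref{lem:recursiveEquation} together with the explicit form of $\mu_\tau$, the conditional density at $0^+$ of the first Yule branching $d_\varnothing$ given $X^\tau_0(\varnothing) = m$ simplifies to $2 p(\tau) + \rho(\tau) m$. Conditionally on this branching and on $Y_1 + Y_2 = m$ for two i.i.d.\@ copies $Y_1, Y_2 \sim \mu_\tau$, a direct inspection of the four components of $\mu_\tau \otimes \mu_\tau$ shows that the probability that both carry positive paint equals $\rho(\tau) m / (2 p(\tau) + \rho(\tau) m)$, and conditionally on this event the joint density of $(Y_1, Y_2)$ is constant on $\{(y_1, y_2) : y_1 + y_2 = m,\, y_1, y_2 > 0\}$, producing the uniform split. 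Multiplying these two factors, the denominators cancel and one obtains the rate $\rho(\tau) m$; the Markov structure then gives rate $\rho(t-s) m$ at full-tree time $s$. The deterministic growth at speed $1$ between red-branchings is a consequence of the painting dynamics itself: along each Yule branch paint is lost linearly at speed $1$ toward the root, and the red mass is continuous at Yule branchings where the non-red child has mass $0$.

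The delicate point is that the events of $\mathfrak{X}^t$ are not the events of the underlying Yule tree: a Yule branching where one child carries no paint is invisible to $\mathfrak{X}^t$. A naive Poisson thinning of the rate-$1$ Yule clock would therefore yield the incorrect rate $\rho(t-s) m / (2 p(t-s) + \rho(t-s) m)$. The correct computation must condition on the current mass of the red particle, which biases the effective rate of the next Yule branching from $1$ to $2 p(t-s) + \rho(t-s) m$ and produces the clean cancellation with the red-branching probability. This cancellation is the analytic reflection of the stability of exponential mixtures under the continuous-time DR dynamics.
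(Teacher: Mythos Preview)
Your approach is correct and takes a genuinely different route from the paper. The paper proves Point~3 by decomposing on the Yule generation $k$ of the first particle whose two children both carry positive paint: it sums over all $u$ with $\lvert u\rvert=k$, uses that the $k$ intermediate non-red branching times are, given $d_u$, i.i.d.\ uniform on $[0,d_u]$, and then verifies that the resulting explicit density matches the claimed form via the identity~\eqref{eq:last_formula}, which in turn relies on the ODE relation $(\log\rho)'=2p-1-\lambda$. You instead use the Markov property of $\mathfrak{X}^t$ at every deterministic time $s$ (a consequence of Lemma~\ref{lem:branching_property_X^t}, since on $\{\mathfrak{e}^t_\varnothing>s,\,X^t_0(\varnothing)=m\}$ the unique red particle at time $s$ has mass $m+s$ and its subtree is a fresh DR tree of height $t-s$) to reduce the hazard rate at time $s$ to the hazard rate at $0^+$ for a tree of height $t-s$; the latter is obtained by your direct computation yielding the cancellation $(2p(\tau)+\rho(\tau)m)\cdot\frac{\rho(\tau)m}{2p(\tau)+\rho(\tau)m}=\rho(\tau)m$. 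This bypasses both the summation over $k$ and the differential identity~\eqref{eq:last_formula} entirely.

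Two points are worth making explicit when you write this up. First, your rate computation at $0^+$ literally produces the density of the event ``the first \emph{Yule} branching lies in $[0,h]$ and is red''; the event ``the first \emph{red} branching lies in $[0,h]$'' differs from this only by the $O(h^2)$ probability of two Yule branchings in $[0,h]$, so the hazard rates do coincide. Second, the passage from the hazard-rate function $s\mapsto\rho(t-s)(m+s)$ to the survival probability $\exp\bigl(-\int_0^s\rho(t-r)(m+r)\,\diff r\bigr)$ uses that the mass is deterministic between red branchings; this is exactly your Point~2, but the logical dependence should be flagged.
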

This proposition allows us to define the distribution of $\mathfrak{X}^t$ ``starting from $x$''. 
Indeed, for any $x > 0$, let $\P_x$ denote a probability measure such that, under $\P_x$, the process $\mathfrak{X}^t$ is the time-inhomogeneous branching Markov process starting with $\mathfrak{X}^t_0(\varnothing) = x$ and satisfying Points 2, 3 and 4 of Proposition \ref{prop:law_of_the_red_tree}.
Then, informally, the law of $\mathfrak{X}^t$ under $\P$ conditioned by $X^t_0(\varnothing) = x$ is the law of $\mathfrak{X}^t$ under $\P_x$.
Before proving Proposition \ref{prop:law_of_the_red_tree}, the following lemma shows that the whole process $X^t$ satisfies also a branching property. 
\begin{lemma} \label{lem:branching_property_X^t}
Let $s \in (0,t)$. Given the Yule tree and the process $X^t$ until time~$s$, the $(X^t_{s+r}(v), r \in [0,t-s], v \in \mathcal{N}_{s+r} \text{ s.t.\@ } v \geq u)$ for $u \in \mathcal{N}_s$ are independent with respectively the same law as $X^{t-s}$ starting from $X^t_s(u)$.
\end{lemma}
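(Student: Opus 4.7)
The key observation is that the painting scheme defining $X^t$ is \emph{local} on the Yule tree: by the recursion \eqref{Xtsv}, the values $X^t_r(v)$ for $v \geq u$ and $r \in [s,t]$ depend only on the sub-Yule-tree rooted at $u$ together with the i.i.d.\ $\mu_0$-distributed variables attached to its leaves at generation $t$. Combined with the branching property of the Yule tree, this will immediately produce the claimed Markovian structure.

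First, I would fix $s \in (0,t)$ and introduce the $\sigma$-algebra $\mathcal{G}_s$ generated by the Yule tree restricted to $[0,s]$ (in particular, $\mathcal{N}_s$ is $\mathcal{G}_s$-measurable), as well as, for each $u \in \mathcal{N}_s$, the pair $\mathcal{T}_u$ consisting of the sub-Yule-tree rooted at $u$ on $[s,t]$ together with the i.i.d.\ $\mu_0$-distributed leaf variables attached to its descendants in $\mathcal{N}_t$. By the branching property of the Yule tree and the independence of the leaf variables, conditionally on $\mathcal{G}_s$ the family $(\mathcal{T}_u)_{u \in \mathcal{N}_s}$ is independent, each $\mathcal{T}_u$ being distributed as a Yule tree of height $t-s$ decorated with i.i.d.\ $\mu_0$-distributed leaf variables. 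Second, I would use \eqref{Xtsv} to observe that, after the time shift $r \mapsto s+r$ and the relabeling identifying $u$ with the new root $\varnothing$, the collection $(X^t_{s+r}(v), r \in [0, t-s], v \geq u)$ is precisely the deterministic function of $\mathcal{T}_u$ producing a DR tree of height $t-s$ and initial law $\mu_0$. Consequently, conditionally on $\mathcal{G}_s$, the sub-processes for distinct $u \in \mathcal{N}_s$ are independent and each has the law of $X^{t-s}$.

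Third, I would extend the conditioning from $\mathcal{G}_s$ to the $\sigma$-algebra $\mathcal{F}_s$ generated by the Yule tree and by $X^t$ up to time $s$. Iterating \eqref{Xtsv} backwards in time allows to express $X^t_r(w)$ for $r \leq s$ and $w \in \mathcal{N}_r$ as a deterministic function of $\mathcal{G}_s$ and of $\{X^t_s(u) : u \in \mathcal{N}_s,\, u \geq w\}$, so that $\mathcal{F}_s = \sigma(\mathcal{G}_s, \{X^t_s(u) : u \in \mathcal{N}_s\})$. Since each $X^t_s(u)$ is a function of $\mathcal{T}_u$ alone, the conditional independence of the $(\mathcal{T}_u)$'s carries over from $\mathcal{G}_s$ to $\mathcal{F}_s$, and what we additionally learn from $\mathcal{F}_s$ is exactly the value of the root of each sub-process. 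This yields the stated branching property, the expression ``$X^{t-s}$ starting from $X^t_s(u)$'' being understood through a regular conditional probability. No serious obstacle is expected; the only slightly delicate step is the measurability identity for $\mathcal{F}_s$, which rests on the fact that the painting scheme below level $s$ is entirely determined by the tree structure on $[0,s]$ and by the values $(X^t_s(u))_{u \in \mathcal{N}_s}$.
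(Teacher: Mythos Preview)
Your proposal is correct and follows essentially the same approach as the paper: both arguments rest on the branching property of the Yule tree, the observation that the sub-process above $u$ is a deterministic function of the decorated subtree $\mathcal{T}_u$, and the reduction $\mathcal{F}_s = \sigma(\mathcal{G}_s,\{X^t_s(u):u\in\mathcal{N}_s\})$. The only cosmetic difference is that the paper phrases the argument via test functions $\Upsilon_s = g(\T_s)\prod_u h_u(X^t_s(u))$ and absorbs each factor $h_u(X^t_s(u))$ into the functional $F_u$ of the subtree, which is exactly your step ``each $X^t_s(u)$ is a function of $\mathcal{T}_u$ alone.''
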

\begin{proof} 
Let $\T_s \coloneqq (\mathcal{N}_r,r\in[0,s])$ denote the Yule tree until time $s$.
Our aim is to prove that, for any family of nonnegative measurable functions $(F_u)_{u \in \T}$ and any r.v.\@ $\Upsilon_s$ measurable w.r.t.\@ the $\sigma$-field generated by $\T_s$ and the process $X^t$ until time~$s$, 
\begin{align}
& \E \left[
\Upsilon_s \prod_{u \in \mathcal{N}_s}
F_u \left( X^t_{s+r}(v), r \in [0,t-s], v \in \mathcal{N}_{s+r} \text{ s.t.\@ } v \geq u \right)
\right] \nonumber \\
& = \E \left[
\Upsilon_s \prod_{u \in \mathcal{N}_s} \tilde{F}_u \left( X^t_s(u) \right)
\right]. \label{pl}
\end{align}
where, for $u \in \T$, $\tilde{F}_u \colon \R_+ \to \R_+$ is the measurable function (unique up to a Lebesgue-negligible set) such that
\[
\tilde{G}_u(X^{t-s}_0(\varnothing))
\coloneqq \E \left[
F_u \left( X^{t-s}_r(v), r \in [0,t-s], v \in \mathcal{N}_r \right)
\middle| X^{t-s}_0(\varnothing) \right].
\]
The process $X^t$ until time~$s$ is a deterministic function of $\T_s$ and of the $X_u(s)$ for $u \in \mathcal{N}_s$. 
Therefore, it suffices to prove \eqref{pl} for $\Upsilon_s$ of the form $\Upsilon_s = g(\T_s) \prod_{u\in \mathcal{N}_s} h_u(X_u(s))$, with $g$ and $(h_u)_{u\in\T}$ measurable functions.
But, each factor $h_u(X_u(s))$ can be incorporated in $F_u \left( \cdots \right)$, with an appropriate change of the function $F_u$.
Hence, it is finally enough to consider $\Upsilon_s = g(\T_s)$.

Given $\T_s$, the subtrees rooted in the $u$'s for $u \in \mathcal{N}_s$ are independent Yule trees of height $t-s$ and the labels on their leaves are independent of each other. 
Therefore, given $\T_s$, the $(X^t_{s+r}(v), r \in [0,t-s], v \in \mathcal{N}_{s+r} \text{ s.t.\@ } v \geq u)$ for $u \in \mathcal{N}_s$ are independent copies of $X^{t-s}$ and it proves \eqref{pl} in the case $\Upsilon_s = g(\T_s)$.
This concludes the proof.
\end{proof}
\begin{remark}
Note that this proof does not use the fact that we are working with the exponential model. Therefore, for any initial law $\mu_0$, $X^t$ is a time-inhomogeneous branching Markov process.
\end{remark}
\begin{proof}[Proof of Proposition \ref{prop:law_of_the_red_tree}]
Points 1 and 2 are clearly true.
Point 4 follows directly from the branching property for $X^t$ established in Lemma \ref{lem:branching_property_X^t}.
We focus now on Point~3.
By the branching property, it is sufficient to prove it for the root of the tree:
hence, our aim is to prove that, for any $0 \leq s < t$ and any measurable functions $f,g_1,g_2 \colon \R \to \R_+$,
\begin{align}
& \E \left[ f(X^t_0(\varnothing)) \1_{X^t_0(\varnothing) > 0}
\1_{\mathfrak{e}^t_\varnothing \leq s} 
g_1(\mathfrak{X}^t_{\mathfrak{e}^t_\varnothing}(1)) g_2(\mathfrak{X}^t_{\mathfrak{e}^t_\varnothing}(2)) 
\right] \nonumber\\
\begin{split}
& = \int_0^\infty f(x) \rho(t) \e^{-\lambda(t) x}
\int_0^s 
\rho(t-r) (x+r) \exp \left( - \int_0^r \rho(t-v) (x+v) \diff v \right) \\
& \hphantom{= \int_0^\infty f(x) \rho(t) \e^{-\lambda(t) x} \int_0^s} {}
\times \int_0^{x+r} g_1(y) g_2(x+r-y) \frac{\diff y}{x+r}
\diff r \diff x,
\end{split} \label{eq:density_branching_and_splitting}
\end{align}
recalling that $\mathfrak{e}^t_\varnothing$ denotes the lifetime of $\varnothing$ in the red tree.
First note that $X^t_0(\varnothing) + \mathfrak{e}^t_\varnothing = \mathfrak{X}^t_{\mathfrak{e}^t_\varnothing}(1) + \mathfrak{X}^t_{\mathfrak{e}^t_\varnothing}(2)$ and, therefore, the left-hand side of \eqref{eq:density_branching_and_splitting} is equal to
\begin{align}
\E \left[ 
f(\mathfrak{X}^t_{\mathfrak{e}^t_\varnothing}(1) + \mathfrak{X}^t_{\mathfrak{e}^t_\varnothing}(2)-\mathfrak{e}^t_\varnothing) 
\1_{\mathfrak{X}^t_{\mathfrak{e}^t_\varnothing}(1) + \mathfrak{X}^t_{\mathfrak{e}^t_\varnothing}(2) > \mathfrak{e}^t_\varnothing}
\1_{\mathfrak{e}^t_\varnothing \leq s} 
g_1(\mathfrak{X}^t_{\mathfrak{e}^t_\varnothing}(1)) g_2(\mathfrak{X}^t_{\mathfrak{e}^t_\varnothing}(2)) \right].
\label{ql}
\end{align}
Then, we distinguish on the first particle $u$ starting from the root, that splits its mass into two \textit{positive} masses between its children.
Denoting $\varnothing = u_0 < u_1 < \dots < u_k = u$ the lineage of $u$ and, for $v \in \T \setminus \{ \varnothing \}$, $\overline{v}$ the single brother of $v$, we get that \eqref{ql} is equal to
\begin{align}
\begin{split}
& \sum_{k\geq0} \sum_{\lvert u \rvert = k} 
\E \Biggl[ \1_{\forall i \in \llbracket 1,k \rrbracket, X_{b_{u_i}}^t (\overline{u_i})= 0} 
\1_{X^t_{d_u}(u1),X^t_{d_u}(u2)>0} 
f(X^t_{d_u}(u1) + X^t_{d_u}(u2) - d_u) \\
& \hphantom{\sum_{k\geq0} \sum_{\lvert u \rvert = k} \E \Biggl[ }
\times \1_{X^t_{d_u}(u1) + X^t_{d_u}(u2) > d_u}
\1_{d_u \leq s} 
g_1(X^t_{d_u}(u1)) g_2(X^t_{d_u}(u2)) \Biggr]. 
\end{split} \label{qm}
\end{align}
Using the branching property, on the event $d_u \leq s$ and given $b_{u_1}, \dots, b_{u_k},d_u$, the Yule trees rooted in $\overline{u_1},\dots,\overline{u_k},u1,u2$ and the labels on their leaves are independent and, therefore, $X_{b_{u_1}}^t (\overline{u_1}), \dots, X_{b_{u_k}}^t (\overline{u_k}), X^t_{d_u}(u1), X^t_{d_u}(u2)$ are independent with laws $\mu_{b_{u_1}}, \dots \mu_{b_{u_k}}, \mu_{d_u}, \mu_{d_u}$ respectively.
Thus, taking the conditional expectation given $b_{u_1}, \dots, b_{u_k},d_u$, \eqref{qm} is equal to
\begin{align}
\begin{split}
& \sum_{k\geq0} \sum_{\lvert u \rvert = k} 
\E \Biggl[ 
\left( \prod_{i=1}^k p(t-b_{u_i}) \right)
\1_{d_u \leq s} (1-p(t-d_u))^2 \lambda(t-d_u)^2 \\
& \hphantom{\sum_{k\geq0} \sum_{\lvert u \rvert = k} \E \Biggl[ }
\times 
\int_0^\infty \int_0^\infty
 \e^{-\lambda(t-d_u)(y+z)}
f(y+z-d_u) \1_{y+z>d_u} g_1(y) g_2(z) \diff z \diff y \Biggr].
\end{split} \label{qn}
\end{align}
On the other hand, given $d_u$, the vector $(b_{u_i})_{1\leq i \leq k}$ has the same law as the ordered vector $(U_i)_{1\leq i \leq k}$, where the $U_i$'s are i.i.d.\@ uniform r.v.\@ on $[0,d_u]$, so, for any measurable positive function $h$ and any $\lvert u \rvert = k$, we have
\begin{align*}
\E \Biggl[ \left( \prod_{i=1}^k p(t-b_{u_i}) \right)
\1_{d_u \leq s} h(d_u) \Biggr]
& = \E \Biggl[ 
\left( \int_0^{d_u} p(t-v) \frac{\diff v}{d_u} \right)^k
\1_{d_u \leq s} h(d_u) \Biggr] \\
& = \int_0^s \left( \int_0^r p(t-v) \frac{\diff v}{r} \right)^k
h(r) \frac{r^k \e^{-r}}{k!} \diff r,
\end{align*}
using that $d_u$ follows the $\Gamma(k+1,1)$ distribution.
Noting that $\#\{ \lvert u \rvert = k \} = 2^k$ and summing over $k$, we get that \eqref{qn} is equal to
\begin{align}
\begin{split}
& \int_{r=0}^s \exp \left( \int_0^r 2 p(t-v) \diff v \right) 
(1-p(t-r))^2 \lambda(t-r)^2 \\
& \hphantom{\int_{r=0}^s} {} \times
\int_{y=0}^\infty \int_{z=0}^\infty
 \e^{-\lambda(t-r)(y+z)}
f(y+z-r) \1_{y+z>r} g_1(y) g_2(z) \diff z \diff y
\e^{-r} \diff r.
\end{split} \label{qo}
\end{align}
Changing $z$ into $x=y+z-r$, using Fubini's theorem and recalling that $\rho(t) = (1-p(t)) \lambda(t)$, \eqref{qo} is equal to
\begin{align*}
\begin{split}
& \int_{x=0}^\infty f(x)
\int_{r=0}^s \exp \left( \int_0^r (2 p(t-v)-1) \diff v \right) 
\rho(t-r)^2 \e^{-\lambda(t-r)(x+r)} \\
& \hphantom{\int_{x=0}^\infty f(x) \int_{r=0}^s} {} \times
\int_{y=0}^{x+r} g_1(y) g_2(x+r-y) \diff y
\diff r \diff x.
\end{split}
\end{align*}
Now, in order to prove \eqref{eq:density_branching_and_splitting}, it is sufficient to prove that
\begin{align}
& \rho(t-r) 	\e^{-\lambda(t-r) (x+r)}   
\exp \left( -\int_0^r (1-2 p(t-v)) \diff v \right) \nonumber \\
& = \rho(t) \e^{-\lambda(t) x} 
\exp \left( -\int_0^r \rho(t-v) (x+v) \diff v \right).
\label{eq:last_formula}
\end{align}
For this, note that $(\log \rho)' = 2p - 1 - \lambda$ by \eqref{eqn:systDiff} and therefore
\begin{align*}
\frac{\rho(t-r)}{\rho(t)} 
& = \exp \left( \log \rho(t-r) - \log\rho(t) \right) 
= \exp \left( -\int_0^r (2p(t-v)-1-\lambda(t-v)) \diff v \right).
\end{align*}
On the other hand, using that $\frac{\diff}{\diff v} \big(\lambda(t-v)(x+v)\big) = \rho(t-v)(x+v) + \lambda(t-v)$, we get
\begin{align*}
\frac{\e^{- \lambda(t-r)(x+r)}}{\e^{-\lambda(t)x}}
= \exp \left( -\int_0^r (\rho(t-v)(x+v) + \lambda(t-v)) \diff v \right).
\end{align*}
Combining the two previous equations, it proves \eqref{eq:last_formula} and concludes the proof.
\end{proof}

\begin{remark}
Using \cite{harrisroberts2014}, one can prove a many-to-one lemma for the red tree. More precisely, the following result holds: For all measurable bounded function $f$, we have
\[
  \E\left( \sum_{u \in \mathcal{N}_t} f(Y^{t,x}_s(u), s \leq t) \right) = \E\left( \e^{\int_0^t \rho(t-s) Z^{t,x}_s \diff s} f(Z^{t,x}_s, s \leq t)\right),
\]
where $Z^{t,x}$ is a time-inhomogeneous Markov process with $Z^{t,x}_0 = x$, with a positive drift $1$, that jumps at rate $2\rho(t-s) Z_s$ to position $UZ^{t,x}$ with $U$ an independent uniform random variable.
\end{remark}

\subsection{Scaling limit of the red tree}

In this subsection, we consider the critical case by taking as initial law $\mu_0$ given by  \eqref{eqn:initialDistribution} with  $(p(0), \lambda(0)) \in {\mathscr C}$.  In that case, it follows from Proposition \ref{prop:asymptotics_of_p_and_lambda} that $\rho(s) = \lambda(s)(1-p(s)) \sim 2/s^2$ as $s \to \infty$.
Using this asymptotic result, one can take the limit $t \to \infty$ in the description of the red tree of Proposition \ref{prop:law_of_the_red_tree}, as shown in the next result.

Before stating and proving this result, we need some additional formalism.
The set of binary continuous trees is $(\R_+)^\T$, where a tree is represented by the lengths of its branches, and the set of masses is $\mathcal{C}(\R_+)^\T$, where to  each particle of the tree is associated the continuous function of its mass during its lifetime.
Then, the process $(\mathfrak{X}^t_s(v), v \in \mathfrak{N}_s^t, s \leq t)$ is encoded as the following random variable in $(\R_+)^\T \times \mathcal{C}(\R_+)^\T$:
\[
\left(
(\mathfrak{e}_u^t)_{u \in \T}, 
(\mathfrak{X}^t_{(\mathfrak{b}_u^t + s) \wedge \mathfrak{d}_u^t} (u), s \geq 0)_{u \in \T}
\right), 
\]
where we recall that for any $u \in \T$,  $\mathfrak{e}_u^t$, $\mathfrak{b}_u^t$ and $\mathfrak{d}_u^t$ denotes respectively the lifetime, birth time and death time of $u$.  
The set $\mathcal{C}(\R_+)$ is endowed with the topology of uniform convergence on every compact set and $(\R_+)^\T \times \mathcal{C}(\R_+)^\T$ with the product topology.

Now, for $T \geq 0$, we define the restriction of the tree to the time interval $[0,T]$ by
\[
\left(
((\mathfrak{d}_u^t \wedge T) - (\mathfrak{b}_u^t \wedge T))_{u \in \T}, 
(\mathfrak{X}^t_{(\mathfrak{b}_u^t + s) \wedge \mathfrak{d}_u^t \wedge T} (u) \1_{\mathfrak{b}_u^t \leq T}, s \geq 0)_{u \in \T}
\right),
\]
which is still an element of $(\R_+)^\T \times \mathcal{C}(\R_+)^\T$.

\begin{theorem} \label{th:scaling_limit_of_the_red_tree} 
Let $\mathfrak{X} = (\mathfrak{X}_s(u), u \in \mathfrak{N}(s), s\in [0,1))$ be a time-inhomogeneous branching Markov process such that under $\P_x$:
\begin{enumerate}
  \item The process starts at time $0$ with a unique particle of mass $x$: $\mathfrak{X}_0(\varnothing) = x$.
  \item The mass associated to each particle grows linearly at speed $1$.
  \item A particle of mass $m$ at time $s$ splits at rate $2m/(1-s)^2$ into two children, the mass $m$ being split uniformly between the two children.
  \item Particles behave independently after their splitting time.
\end{enumerate}
Assume that the initial law $\mu_0$ is given by \eqref{eqn:initialDistribution} with $(p(0), \lambda(0)) \in {\mathscr C}$. 
Let $(x_t)_{t\geq 0}$ be a family of positive real numbers such that $x_t /t \to x \in \R_+$ as $t \to \infty$.
Then, for any $\varepsilon \in (0,1)$, the restriction of $(\mathfrak{X}^t_{st}(v)/t, v \in \mathfrak{N}_{st}^t, s \in [0,1))$ to $[0,1-\varepsilon]$ under $\P_{x_t}$ (defined below Proposition~\ref{prop:law_of_the_red_tree}) converges in law toward the restriction of $\mathfrak{X}$ to $[0,1-\varepsilon]$ under $\P_x$.
\end{theorem}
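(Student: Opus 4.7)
The plan is to work directly with the rescaled process $\tilde{\mathfrak{X}}^t_s \coloneqq \mathfrak{X}^t_{st}/t$, $s \in [0,1)$. A simple time–space change of scale applied to Proposition~\ref{prop:law_of_the_red_tree} shows that, under $\P_{x_t}$, $\tilde{\mathfrak{X}}^t$ is a time-inhomogeneous branching Markov process starting from $x_t/t$, in which masses grow linearly at speed~$1$ (the factor $t$ from the time rescaling cancels the $1/t$ from the space rescaling), a particle of rescaled mass $\tilde m$ at rescaled time $s$ splits at rate $t^2\rho(t(1-s))\,\tilde m$, and mass is split uniformly at each split. By Proposition~\ref{prop:asymptotics_of_p_and_lambda}, in the critical regime $\rho(r)\sim 2/r^2$, so
\[
t^2 \rho(t(1-s)) \xrightarrow[t\to\infty]{} \frac{2}{(1-s)^2},
\]
uniformly in $s \in [0, 1-\varepsilon]$. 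Combined with $x_t/t\to x$, this shows that all the parameters of $\tilde{\mathfrak{X}}^t$ converge on $[0,1-\varepsilon]$ to those of $\mathfrak{X}$, which makes the result plausible.

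I would then establish the convergence in law of the restriction to $[0,1-\varepsilon]$ by combining (a)~convergence of the finite-dimensional data of the tree and (b)~tightness of the number of particles alive before time $1-\varepsilon$. For (a), I would proceed by induction on the depth of the tree. Directly from the description of $\tilde{\mathfrak{X}}^t$ (or, equivalently, rescaling \eqref{eq:density_branching_and_splitting}), the joint density of the first splitting time and of the two offspring masses converges pointwise, and hence by Scheff\'e's lemma in total variation, to the corresponding density for $\mathfrak{X}$ under $\P_x$. The branching property (Lemma~\ref{lem:branching_property_X^t}, transported to $\tilde{\mathfrak{X}}^t$) then expresses the subtrees after the first split as independent copies of the same model started at a shifted time and with new initial masses; a straightforward recursion yields the joint convergence in distribution, for every $n\geq 1$, of the birth times and offspring masses of the first $n$ individuals of the red tree.

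For (b), let $N^t(s)$ and $M^t(s)$ denote the number of alive particles and the total mass of $\tilde{\mathfrak{X}}^t$ at time $s$. Since mass is conserved at splittings and grows at unit rate between them, and since the total splitting rate equals $t^2\rho(t(1-s))\,M^t(s)$, taking expectations gives
\begin{align*}
\tfrac{d}{ds}\E_{x_t}[M^t(s)] &= \E_{x_t}[N^t(s)], \\
\tfrac{d}{ds}\E_{x_t}[N^t(s)] &= t^2\rho(t(1-s))\,\E_{x_t}[M^t(s)].
\end{align*}
A Gronwall-type comparison with the limiting linear system $m'=n$, $n'=2m/(1-s)^2$ (whose general solution is a linear combination of $(1-s)^2$ and $(1-s)^{-1}$) yields $\sup_{t\geq t_0} \E_{x_t}[N^t(1-\varepsilon)]<\infty$ for some $t_0$, which is more than enough to obtain tightness of $N^t(1-\varepsilon)$ and $M^t(1-\varepsilon)$.

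The main obstacle is to deduce convergence in the product topology on $(\R_+)^\T \times \calC(\R_+)^\T$ from the finite-dimensional convergence and the tightness. On the event $\{N^t(1-\varepsilon)\leq K\}$, the restricted tree is encoded by finitely many branching times and masses, and between two consecutive branchings the mass of each particle is affine with slope~$1$; hence Skorokhod representation allows one to couple the rescaled and limit trees so that these finitely many numbers converge almost surely, which in turn yields the uniform convergence of the mass function along each branch and the convergence of the branch lengths. Letting $K\to\infty$ after $t\to\infty$, together with the tightness from (b), completes the proof.
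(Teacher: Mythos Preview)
Your proof is correct and follows essentially the same architecture as the paper's: reduce to finite-dimensional convergence via tightness of the tree size before time $1-\varepsilon$, then induct on tree depth using the branching property and the convergence of the first splitting time. The finite-dimensional part is virtually identical (the paper bypasses Scheff\'e by observing that, since masses are affine between splits, convergence of $\mathfrak{e}_\varnothing^t/t$ in distribution suffices, and this follows from the explicit survival-probability formula and $\rho(r)\sim 2/r^2$).

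The one genuine difference is the tightness step. The paper argues more directly: on $[0,(1-\varepsilon)t]$ every particle has mass at most $x_t+t\le (x+2)t$ and $\rho(t-s)\le 4/(\varepsilon t)^2$, so the per-particle branching rate is bounded by $4(x+2)/(\varepsilon^2 t)$; hence the maximal generation is stochastically dominated by that of a Yule tree with parameter $4(x+2)/\varepsilon^2$ run for time $1-\varepsilon$, which is tight. Your route via the coupled ODEs for $\E[N^t(s)]$ and $\E[M^t(s)]$ and Gronwall comparison with $m''=2m/(1-s)^2$ is also valid and gives the same bound on $\sup_t \E_{x_t}[N^t(1-\varepsilon)]$ (and tightness of $N^t(1-\varepsilon)$ implies tightness of the maximal generation, since in a binary tree the generation is at most the number of leaves minus one). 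The paper's argument is shorter; yours has the minor advantage of yielding explicit moment bounds that could be reused elsewhere.
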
 
\begin{proof}
The first step is to prove that we can deal only with a finite number of particles.
Let $m_t \coloneqq \max_{u \in \mathfrak{N}_{(1-\varepsilon)t}} \lvert u \rvert$ be the maximal generation among the red particles alive at time $(1-\varepsilon)t$, we are going to show that $(M_t)_{t \geq 0}$ is tight in $\R_+$.
By Proposition \ref{prop:asymptotics_of_p_and_lambda}, for any $t$ large enough and $s \in [0,(1-\varepsilon)t]$, we have $\rho(t-s) \leq 4/(t-s)^2 \leq 4/(\varepsilon t)^2$.
On the other hand, note that under $\P_{x_t}$, the mass of any particle at any time is bounded by $x_t + t$, and therefore by $(x+2)t$ for $t$ large enough.
Therefore, by Proposition \ref{prop:law_of_the_red_tree}, the branching rate of any particle on the time interval $[0,(1-\varepsilon)t]$ can be upper bounded by $4(x+2)/(\varepsilon^2 t)$.
We conclude that $m_t$ is stochastically dominated by the maximal generation among the particles alive at time $(1-\varepsilon)$ in a Yule tree with parameter $4(x+2)/\varepsilon^2$ and, thus, $(m_t)_{t \geq 0}$ is tight.

Hence, it is now sufficient to prove, for any $m \in \N$, the following convergence in law
\begin{align*}
& \left(
(\mathfrak{e}_u^t/t)_{u \in \T, \lvert u \rvert \leq m}, 
(\mathfrak{X}^t_{(\mathfrak{b}_u^t + st) \wedge \mathfrak{d}_u^t} (u)/t, s \geq 0)_{u \in \T, \lvert u \rvert \leq m}
\right)  \text{ under } \P_{x_t} \\
& \xrightarrow[t\to\infty]{\mathrm{(d)}}
\left(
(\mathfrak{e}_u)_{u \in \T, \lvert u \rvert \leq m}, 
(\mathfrak{X}_{(\mathfrak{b}_u + s) \wedge \mathfrak{d}_u} (u), s \geq 0)_{u \in \T, \lvert u \rvert \leq m}
\right) \text{ under } \P_x.
\end{align*}
By recurrence on $m$ and using the Markov property of $\mathfrak{X}^t$ (Point~4 of Proposition~\ref{prop:law_of_the_red_tree}), it is sufficient to prove that,  
\begin{align} \label{qs}
\left(
 \mathfrak{e}_\varnothing^t/t, ( \mathfrak{X}^t_{(st) \wedge \mathfrak{d}_\varnothing^t} (\varnothing) / t, s \geq 0)
\right) \text{ under } \P_{x_t}
\xrightarrow[t\to\infty]{\mathrm{(d)}}
\left(
 \mathfrak{e}_\varnothing , 
(\mathfrak{X}_{s \wedge \mathfrak{d}_\varnothing} (\varnothing), s \geq 0) 
\right) \text{ under } \P_x
\end{align}
But, $\mathfrak{X}^t_{(st) \wedge \mathfrak{d}_\varnothing^t} (\varnothing) = x_t + ((st) \wedge \mathfrak{e}_\varnothing^t)$ and $\mathfrak{X}_{s \wedge \mathfrak{d}_\varnothing} (\varnothing) = x + (s \wedge \mathfrak{e}_\varnothing)$, so \eqref{qs} reduces to the proof of the convergence in distribution of $\mathfrak{e}_\varnothing^t/t$ under $\P_{x_t}$ toward $\mathfrak{e}_\varnothing$ under $\P_x$.
By Point~3 of Proposition~\ref{prop:law_of_the_red_tree}, for any $r \in (0,1)$, as $t\to\infty$, 
\begin{align} \label{qt}
\P_{x_t} \left( \mathfrak{e}_\varnothing^t/t >r \right)
= \exp\left( - \int_0^{rt} \varrho(t-u) (x_t+u)  \diff u\right)
\xrightarrow[t\to\infty]{}
\exp\left( -\int_0^r \frac{2 (x+v)  \diff v}{(1-v)^2} \right),
\end{align}
using that $\rho(s) \sim 2/s^2$ as $s\to \infty$ by Proposition \ref{prop:asymptotics_of_p_and_lambda} and $x_t/t \to x$ as $t \to \infty$. 
Finally, note that, by definition of the limit $\mathfrak{X}$, the right-hand side of \eqref{qt} is equal to $\P_x(\mathfrak{e}_\varnothing>r)$, which completes the proof.  
\end{proof}

\begin{remark}
We believe that the limit $\mathfrak{X}$ should be universal. 
Defining similarly the red tree for the discrete DR model, we expect the same scaling limit to hold in the critical case.
This is supported by computer calculations of the distribution of first branching time in the discrete red tree, which can be compared with the distribution of $\mathfrak{e}_\varnothing$ under $\P_0$ given by
\[
\forall r \in [0,1], \quad
\P_0 \left(\mathfrak{e}_\varnothing \leq r \right) 
= 1 - \exp\left( -\int_0^r \frac{2 v \diff v}{(1-v)^2} \right)
= 1 - \frac{1}{(1-r)^2} \exp\left( -\frac{2r}{1-r} \right),
\]
see Figure \ref{fig:simu_first_branching}. Similarly, computer calculations show that the mass in the red tree of height~$n$ is split uniformly among children as $n \to \infty$.
\begin{figure}[ht]
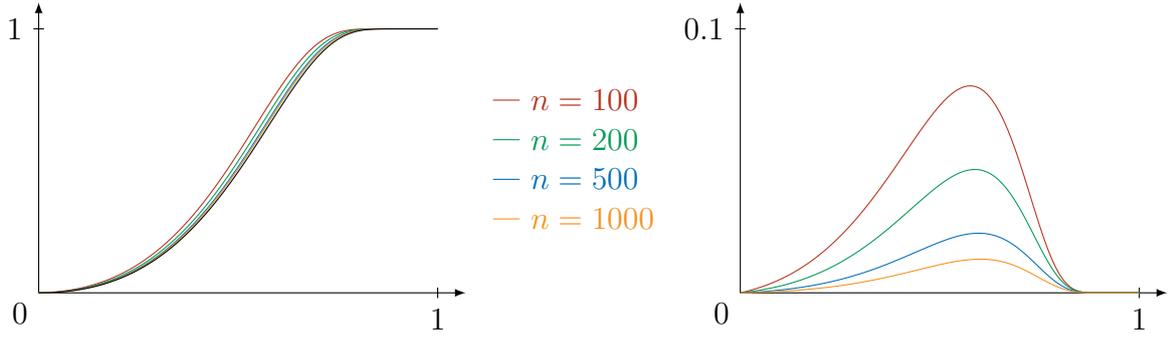
  
\hspace{-0.2cm}%
\subfigure{
\centering
 
}
\caption{For each $n \in \{100,200,500,1000\}$, the repartition function $F_n$ of the first branching time of the discrete red tree of height $n$ with critical initial law $\frac{4}{5} \delta_0 + \frac{1}{5} \delta_2$ has been computed. On the left, the $F_n$'s are drawn in color and the repartition function $F$ of $\mathfrak{e}_\varnothing$ under $\P_0$ is drawn in black. On the right, the $F_n-F$'s are drawn.}
\label{fig:simu_first_branching}
\end{figure}
\end{remark}

\section{Asymptotic behavior of the number and mass of red leaves}\label{s:nombre}
\label{section:asymptotic_mass_leaves}

The red tree, studied in the previous section, is a tool for understanding how mass can be brought from the leaves to the root. We focus here on the critical case, with initial law $\mu_0$ given by \eqref{eqn:initialDistribution} with $(p(0), \lambda(0)) \in {\mathscr C}$, and answer the following questions: typically, when mass is brought to the root, how many leaves have contributed to this amount of mass~? How much mass do these leaves carry~? 

In order to answer these	questions, let $N_t$ be the number of red leaves and $M_t$ the total mass of the red leaves:
\[
N_t \coloneqq \# \mathfrak{N}^t_t
\quad \text{and} \quad
M_t \coloneqq \sum_{u \in \mathfrak{N}^t_t} \mathfrak{X}^t_t(u).
\]
The joint asymptotic behavior of $N_t$ and $M_t$ is given by the following result.
\begin{theorem} \label{th:total_number_mass_of_red_leaves}  Let    $(p(0), \lambda(0)) \in {\mathscr C}$.  
There exist $\gamma_1,\gamma_2 > 0$ such that, for any family $(x_t)_{t\geq 0}$ of positive real numbers such that $x_t /t \to x \in \R_+$ as $t \to \infty$,
$(t^{-2}N_t, t^{-2}M_t)$ under $\P_{x_t}$ converges in distribution to 
$(\gamma_1 \eta,\gamma_2 \eta)$, where $\eta \coloneqq \frac{3}{2} \int_0^1 r^2(s) \diff s$, with $r(\cdot)$ denoting a $4$-dimensional Bessel bridge from $0$ to $2 \sqrt{x}$.
\end{theorem}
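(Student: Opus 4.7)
The plan is to compute the joint Laplace transform of $(N_t, M_t)$ using the time-inhomogeneous branching Markov structure of the red tree $\mathfrak{X}^t$ provided by Proposition~\ref{prop:law_of_the_red_tree}, and to identify its $t\to\infty$ limit with the Laplace functional of an integral of a squared Bessel bridge through the Pitman--Yor formula. Fix $\alpha, \beta \geq 0$ and set
\[
\phi_t(s, x) \coloneqq \E\!\left[\exp\!\left(-\tfrac{\alpha}{t^2} N_t - \tfrac{\beta}{t^2} M_t\right) \,\Big|\, \text{one particle of mass } x \text{ at time } s\right].
\]
Distinguishing whether the root particle splits in $[s, s + ds)$ and using the branching property, $\phi_t$ satisfies the backward PDE
\[
(\partial_s + \partial_x) \phi_t(s, x) = \rho(t-s)\, x \left[\phi_t(s, x) - \int_0^1 \phi_t(s, ux)\,\phi_t(s, (1-u)x)\,du\right],
\]
with terminal condition $\phi_t(t, x) = \exp(-\alpha/t^2 - \beta x/t^2)$; the target Laplace transform is $\phi_t(0, x_t)$. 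This is the content of Lemma~\ref{lem:function_phi}, whose existence and uniqueness follow from a Picard iteration on the associated Duhamel representation of the equation.

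I next introduce the rescaled variables $\sigma = s/t$, $y = x/t$ and set $\Phi_t(\sigma, y) \coloneqq \phi_t(\sigma t, y t)$. Using the critical asymptotics $\rho(r) = 2/r^2 + o(1/r^2)$ from Proposition~\ref{prop:asymptotics_of_p_and_lambda}, the rescaled branching rate becomes $\rho(t(1-\sigma))\, y t \sim 2y/(t(1-\sigma)^2)$, so $\Phi_t$ should formally satisfy in the limit $t \to \infty$
\[
(\partial_\sigma + \partial_y) \Phi(\sigma, y) = \frac{2y}{(1-\sigma)^2} \left[\Phi(\sigma, y) - \int_0^1 \Phi(\sigma, uy)\,\Phi(\sigma, (1-u)y)\,du\right].
\]
The terminal condition $\phi_t(t,\cdot) \to 1$ is trivial on the outer scale, so the parameters $\alpha, \beta$ enter the limit only through a boundary layer of width $1-\sigma = O(1/t)$. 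On the inner scale $\tau = t(1-\sigma)$, the inner problem reproduces, up to rescaling, the original equation for $\phi_t$ started from a large mass dictated by the outer asymptotics; asymptotic matching identifies the correct behaviour of $\Phi$ as $\sigma \uparrow 1$ and thereby encodes $\alpha, \beta$. The analysis of this limit equation is the content of Section~\ref{ssub:laplace}.

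Finally, to identify $\Phi(0, x) = \E[\exp(-\alpha \gamma_1 \eta - \beta \gamma_2 \eta)]$ with $\eta = \tfrac{3}{2} \int_0^1 r^2(s)\,ds$ for $r$ a $4$-dimensional Bessel bridge from $0$ to $2\sqrt{x}$, I invoke the Pitman--Yor formula for the Laplace functional of $\int_0^1 Q_s \, ds$ where $Q$ is a squared Bessel bridge of dimension $4$ from $0$ to $4x$. The associated Feynman--Kac equation coincides, after a change of parameters, with the outer PDE above, the dimension $d=4$ being the fingerprint of the prefactor $2$ in the branching rate $2m/(1-\sigma)^2$ combined with the uniform splitting of the mass among two children. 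The constants $\gamma_1, \gamma_2$ and the normalizing factor $\tfrac{3}{2}$ are extracted by comparing the small-$(1-\sigma)$ expansions of both objects; in particular, the asymptotic proportionality $N_t \sim (\gamma_1/\gamma_2) M_t$ reflects the one-parameter nature of the limit obtained after reduction. The main obstacle is the matched asymptotic analysis at $\sigma = 1$: since the terminal condition is trivial on the outer scale, the full dependence on $(\alpha, \beta)$ must be extracted from the inner boundary layer and propagated through the nonlinear outer PDE. A secondary difficulty is proving tightness of $(N_t/t^2, M_t/t^2)$ and justifying the passage to the limit in the PDE, which I plan to achieve via uniform moment bounds derived from the many-to-one formula noted in the remark following Proposition~\ref{prop:law_of_the_red_tree}.
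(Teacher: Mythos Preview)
Your high-level strategy (compute the joint Laplace transform via the branching structure, take a scaling limit, identify with a Bessel bridge functional) matches the paper's, but you miss the key algebraic observation that makes the analysis tractable. Because the splitting is uniform and the drift is linear, the convolution term in your backward PDE collapses: if you look for a solution of the form $\phi_t(s,x)=\exp\bigl(-\Theta(t-s)-x\,\Theta'(t-s)\bigr)$, then $\int_0^1 \phi_t(s,ux)\phi_t(s,(1-u)x)\,du=\e^{-2\Theta-x\Theta'}$ is independent of the integration variable, and your PDE reduces to the single second-order ODE $\Theta''=\rho\,(1-\e^{-\Theta})$ with $\Theta(0)=\varepsilon_1$, $\Theta'(0)=\varepsilon_2$. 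This is precisely what Lemma~\ref{lem:function_phi} says; it is not merely an existence/uniqueness statement for the PDE, as you suggest.

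Once you have this ODE, there is no need for matched asymptotics on a nonlinear PDE. The paper proceeds by (i) linearizing $\Theta''\approx\rho\,\Theta$ and showing the solution behaves like $(\gamma_1\varepsilon_1+\gamma_2\varepsilon_2)t^2$ (Lemma~\ref{lem:function_a}), (ii) replacing $\rho(s)$ by $2/s^2$ on $[t_0,1/\sqrt\varepsilon]$, where the resulting ODE $z''=(2/s^2)(1-\e^{-z})$ is \emph{explicitly solvable} with $z(s)=\log\bigl(\sinh^2(\sqrt\xi\,s+\zeta)/(\xi s^2)\bigr)$, and (iii) reading off $\Theta_\varepsilon(1/\sqrt\varepsilon)$ and $\Theta_\varepsilon'(1/\sqrt\varepsilon)/\sqrt\varepsilon$ from this closed form. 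The identification with the Bessel bridge is then a direct comparison of the limiting Laplace transform $\tfrac{3c}{\sinh^2\sqrt{3c}}\exp\bigl(-2x(\sqrt{3c}\coth\sqrt{3c}-1)\bigr)$ with Yor's formula, not a Feynman--Kac argument on the outer PDE (which, as you wrote it, is still nonlinear and does not coincide with the linear squared-Bessel generator).

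In short: your boundary-layer scheme is not wrong in spirit, but it would be substantially harder to make rigorous, and the claimed identification step is not correct as stated. The exponential-in-$x$ ansatz is the missing idea.
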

Constants $\gamma_1$ and $\gamma_2$ are defined implicitly in Lemma \ref{lem:function_a} and depend on the initial condition $(p(0), \lambda(0))$.
This section is devoted to the proof of Theorem \ref{th:total_number_mass_of_red_leaves}, via the analytic study of the Laplace transform of $(N_t,M_t)$.
In the sequel, $(p(0), \lambda(0)) \in \mathscr{C}$ is fixed and thus also the auxiliary function $\rho(s) = (1 - p(s)) \lambda(s)$.

A further question that we do not answer here is the following: what is the typical mass of a red leaf~? Thereupon, we state the following conjecture.
\begin{conjecture} \label{c:nombreetmasse}
For any family $(x_t)_{t\geq 0}$ of positive real numbers such that $x_t /t \to x \in \R_+$ as $t \to \infty$, the random measure
\[
\frac{1}{N_t} \sum_{u \in \mathfrak{N}^t_t} \delta_{\mathfrak{X}^t_t(u)} 
\text{ under } \P_{x_t}
\]
converges in law for the topology of vague convergence and the limit does not depend on~$x$.
\end{conjecture}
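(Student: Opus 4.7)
The strategy is to reduce vague convergence of the empirical measure to joint convergence of linear statistics against $N_t$, by extending the ODE machinery underlying Theorem~\ref{th:total_number_mass_of_red_leaves}. The crucial observation is that, in the scaling of Theorem~\ref{th:total_number_mass_of_red_leaves}, both $N_t$ and any reasonable additive functional of the red leaves are of order $t^2$ and share the same random prefactor $\eta$; the ratio is therefore asymptotically deterministic and $\eta$ (hence $x$) drops out.

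\textbf{Step 1 (joint convergence of linear statistics).} For each $f \in \mathcal{C}_c(\R_+)$, set $L^f_t \coloneqq \sum_{u \in \mathfrak{N}^t_t} f(\mathfrak{X}^t_t(u))$. The aim is to prove that, under $\P_{x_t}$ with $x_t/t \to x$,
$$\left(\frac{N_t}{t^2},\frac{L^f_t}{t^2}\right) \xrightarrow[t\to\infty]{\mathrm{(d)}} \bigl(\gamma_1\,\eta,\;\gamma(f)\,\eta\bigr),$$
with the \emph{same} random variable $\eta$ as in Theorem~\ref{th:total_number_mass_of_red_leaves} and a deterministic constant $\gamma(f)\ge 0$ depending only on $f$. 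The branching property of the red tree (Proposition~\ref{prop:law_of_the_red_tree}) applied to the joint Laplace functional
$$\phi_t(x) \coloneqq \E_x\!\left[\exp\!\bigl(-a N_t - b L^f_t\bigr)\right], \qquad a,b\ge 0,$$
produces an integro-differential equation of the same type as the one derived in Lemma~\ref{lem:function_phi} for $(N_t,M_t)$, with $f$ entering only through an extra source term at the boundary time $t$. Asymptotic analysis analogous to Section~\ref{ssub:laplace} should then give $\phi_t(x_t) \to \E\bigl[\exp(-(a\gamma_1 + b\gamma(f))\eta)\bigr]$, establishing the joint convergence with the common factor $\eta$.

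\textbf{Step 2 (taking the ratio and Riesz representation).} Since $\eta>0$ almost surely, continuous mapping applied to $(u,v)\mapsto v/u$ on $\{u>0\}$ yields
$$\frac{L^f_t}{N_t} \xrightarrow[t\to\infty]{\mathrm{(d)}} \frac{\gamma(f)}{\gamma_1} \eqqcolon \mu(f).$$
The limit is deterministic, so this is in fact convergence in probability; linearity and positivity of $f\mapsto L^f_t/N_t$ pass to the limit, hence $\mu$ is a non-negative linear functional on $\mathcal{C}_c(\R_+)$. Riesz representation identifies $\mu$ with a locally finite Radon measure, and convergence on a countable dense subfamily of $\mathcal{C}_c(\R_+)$ together with the above gives vague convergence of $\frac{1}{N_t}\sum_u \delta_{\mathfrak{X}^t_t(u)}$ to $\mu$.

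\textbf{Step 3 ($x$-independence of $\mu$).} In the ODE governing $\phi_t$, the parameter $x$ enters only through the boundary/initial datum, while $\gamma_1$ and $\gamma(f)$ both arise as leading coefficients governed by the behavior of $\rho(t-s)\sim 2/(t-s)^2$ near $s=t$. A careful bookkeeping of this dependence, paralleling the proof that $\gamma_1$ and $\gamma_2$ of Theorem~\ref{th:total_number_mass_of_red_leaves} factor out the same $\eta$, should show that $x$ affects $\gamma_1$ and $\gamma(f)$ by the same multiplicative constant, so that $\mu(f)=\gamma(f)/\gamma_1$ is $x$-free.

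\textbf{Main obstacle.} The principal difficulty lies in Step~1: deriving the ODE for $\phi_t$ with a generic test function $f$, and extracting sharp enough asymptotics to read off $\gamma(f)$ \emph{and} verify that the limit has the factorized form $\gamma(f)\eta$ with the \emph{same} $\eta$ as appears in $N_t/t^2$. Unlike the $(a,b)$ case of Lemma~\ref{lem:function_phi}, the $f$-term does not preserve an exponential ansatz, so one likely has to proceed perturbatively in $b$ around the known solution, or give a probabilistic interpretation in terms of the limiting time-inhomogeneous growth-fragmentation process of Theorem~\ref{th:scaling_limit_of_the_red_tree} together with its boundary-layer completion near $s=1$. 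A subsidiary but real difficulty is tightness of the empirical measure: vague convergence permits mass to escape to $0$ or $\infty$, and ruling this out (should one want convergence to a probability measure) would require a many-to-one estimate controlling the proportion of leaves with anomalously small or large mass.
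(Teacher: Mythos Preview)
The statement you are addressing is labeled \textbf{Conjecture}~\ref{c:nombreetmasse} in the paper and is explicitly left open: the authors write ``A further question that we do not answer here is the following\dots Thereupon, we state the following conjecture.'' There is therefore no proof in the paper to compare your attempt against.

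As for your proposal itself, it is not a proof but a strategy sketch, and you correctly identify its principal gap. The heuristic of Step~2 is sound: if one could show that $(N_t/t^2, L^f_t/t^2)$ converges jointly to $(\gamma_1\eta,\gamma(f)\eta)$ with a \emph{common} strictly positive $\eta$, then the ratio is asymptotically the deterministic constant $\gamma(f)/\gamma_1$, and the $x$-dependence, which enters only through the law of $\eta$ (via the endpoint $2\sqrt{x}$ of the Bessel bridge), cancels automatically. Your Step~3 is thus essentially a consequence of Step~1 rather than a separate argument; note also that $\gamma_1,\gamma_2$ in Theorem~\ref{th:total_number_mass_of_red_leaves} already do not depend on $x$, so no ``same multiplicative constant'' mechanism is needed.

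The genuine obstacle is exactly the one you flag in Step~1. The proof of Lemma~\ref{lem:function_phi} relies on the fact that for the pair $(N_t,M_t)$ the joint Laplace transform has the multiplicative-in-$x$ form $\e^{-(\Theta(t)+x\Theta'(t))}$, which in turn reflects that the initial condition $\e^{-\varepsilon_1-\varepsilon_2 x}$ is exponential in $x$ and that this exponential structure is preserved by the branching-with-uniform-split dynamics. For a generic $f\in\mathcal{C}_c(\R_+)$ the boundary datum $\e^{-a-bf(x)}$ is not of this form, the ansatz collapses, and one is left with a genuine two-variable integral equation rather than a scalar second-order ODE. Neither a perturbative expansion in $b$ nor the growth-fragmentation limit of Theorem~\ref{th:scaling_limit_of_the_red_tree} immediately resolves this: the former would at best yield convergence of $\E_{x_t}[L^f_t]/t^2$ (a first-moment statement, not the joint law), while the latter gives control only away from the boundary $s=1$, whereas $L^f_t$ is a functional of the leaves at $s=1$. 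Closing this gap is precisely why the statement remains a conjecture.
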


\subsection{The Laplace transform of the mass and number of red leaves}
\label{subsection:laplace_transform_of_mass_and_number}

For $\varepsilon_1, \varepsilon_2 > 0$, we introduce the following function
\begin{equation} \label{eq:def_phi}
\phi(t,x) = \phi_{\varepsilon_1,\varepsilon_2}(t,x) 
\coloneqq \E_x \left[ \e^{- \varepsilon_1 N_t - \varepsilon_2 M_t } \right], 
\quad t,x \geq 0.
\end{equation}
As a function of $(\varepsilon_1,\varepsilon_2)$, $\phi_{\varepsilon_1,\varepsilon_2}(t,x)$ is the Laplace transform of $(N_t,M_t)$ under $\P_x$.
However, in this subsection, we fix $(\varepsilon_1,\varepsilon_2)$ and study analytically $\phi_{\varepsilon_1,\varepsilon_2}$ as a function of $(t,x)$: therefore, we forget the dependence in $\varepsilon_1,\varepsilon_2$ and write simply~$\phi$.
The aim of this subsection is to prove the following result.
\begin{lemma} \label{lem:function_phi}
Let $\varepsilon_1, \varepsilon_2 > 0$.
Then, for any $t,x \geq 0$, we have 
\begin{equation}\label{phitx} \phi(t,x) = \e^{-(\Theta(t) +x \Theta'(t))},
\end{equation}
where $\Theta$ is the unique solution on $\R_+$ to the following differential equation
\begin{equation}
\addtolength\arraycolsep{-0.125cm}
\left\{\begin{array}{rl} 
\Theta'' & = \rho\,  (1-\e^{-\Theta}), \\
\Theta(0) & = \varepsilon_1, \\
\Theta'(0) & = \varepsilon_2.
\end{array}\right. \label{eq:equa_diff_q_mu}
\end{equation}
Moreover, for any $t \geq 0$, we have $\Theta(t) > 0$ and $\Theta'(t) > 0$.
\end{lemma}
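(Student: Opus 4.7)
My plan is to handle the ODE and the probabilistic identity separately, linking them through a martingale computation based on the branching Markov description of the red tree $\mathfrak{X}^t$ provided by Proposition~\ref{prop:law_of_the_red_tree}.

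For the ODE, I would first invoke Cauchy--Lipschitz, since $(t,\Theta) \mapsto \rho(t)(1 - e^{-\Theta})$ is continuous in $t$ and globally Lipschitz in $\Theta$ (with Lipschitz constant $\rho(t)$), giving local existence and uniqueness. In the critical case, $\rho$ is bounded on $\R_+$: it is continuous at $0$ and satisfies $\rho(s) \sim 2/s^2$ at infinity by Proposition~\ref{prop:asymptotics_of_p_and_lambda}. Together with $1 - e^{-\Theta} \leq 1$ for $\Theta \geq 0$, this shows that so long as $\Theta$ stays non-negative, $0 \leq \Theta'' \leq \|\rho\|_\infty$, forcing at most linear growth of $\Theta'$ and quadratic growth of $\Theta$ and ruling out any blow-up. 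Positivity is then immediate from $\Theta(0),\Theta'(0) > 0$: if $t_0$ were the first zero of $\Theta$ or $\Theta'$, then on $[0,t_0)$ we would have $\Theta'' \geq 0$, forcing $\Theta'(t_0) \geq \varepsilon_2 > 0$ and hence $\Theta(t_0) \geq \varepsilon_1 > 0$, a contradiction.

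For the identity~\eqref{phitx}, I would fix $t > 0$ and introduce, for $r \in [0,t]$, the process
\[
Z_r \coloneqq \exp\bigl(-N_r\, \Theta(t-r) - S_r\, \Theta'(t-r)\bigr), \quad N_r \coloneqq \#\mathfrak{N}^t_r, \quad S_r \coloneqq \sum_{u \in \mathfrak{N}^t_r} \mathfrak{X}^t_r(u).
\]
Under $\P_x$ one has $Z_0 = \exp(-\Theta(t) - x\Theta'(t))$ and, since all leaves are alive at time $t$, $Z_t = \exp(-\varepsilon_1 N_t - \varepsilon_2 M_t)$, so \eqref{phitx} reduces to the martingale identity $\E_x[Z_t] = Z_0$. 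By Proposition~\ref{prop:law_of_the_red_tree}, $(N_r, S_r)_{r \in [0,t]}$ is itself Markov: between jumps, $S$ grows deterministically at rate $N$ while $N$ is constant; each split preserves $S$ (the uniform partition only redistributes the mass of the splitting particle among the two children) and increments $N$ by one, with total rate $\sum_u \rho(t-r) \mathfrak{X}^t_r(u) = \rho(t-r) S_r$. Applying this generator together with $\partial_r$ to $f(n,s,r) = e^{-n\Theta(t-r) - s\Theta'(t-r)}$ yields
\[
\partial_r f + \mathcal{L}_r f = f \cdot s \cdot \bigl(\Theta''(t-r) - \rho(t-r)(1 - e^{-\Theta(t-r)})\bigr),
\]
which vanishes by the ODE. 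Since $Z$ takes values in $(0,1]$, the resulting local martingale is a true martingale, and $\E_x[Z_t] = Z_0$ closes the argument.

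The main technical point is ensuring that $N_r$ does not explode on $[0,t]$, so that $S_r$ and $Z_r$ remain finite and the generator computation is rigorous. This follows by domination: every living particle has mass at most $x+t$, so the total splitting rate is bounded by $\|\rho\|_\infty (x+t) N_r$, and the particle count is dominated by a Yule process with finite parameter, which does not explode in finite time. The pleasant feature of the exponential ansatz is that the uniform partition of mass at each split disappears from the computation, because $f(n+1,s) - f(n,s)$ depends on the splitting particle only through the sum $S_r$; this is precisely what makes the factorized form $\Theta(t) + x\Theta'(t)$ natural and reduces an a priori nonlocal problem to a scalar ODE.
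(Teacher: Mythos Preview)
Your argument is correct and takes a genuinely different route from the paper. The paper conditions on the first branching time $\mathfrak{e}^t_\varnothing$ of the red tree to derive an integral equation for $\phi$ (Equation~\eqref{eq:integral_equation_for_phi}), verifies by direct differentiation that the ansatz $e^{-(\Theta(t)+x\Theta'(t))}$ satisfies it, and then proves uniqueness of bounded solutions to that integral equation via a Gronwall-type iteration on triangles. Your martingale approach bypasses both the integral equation and the uniqueness step: once you observe that the pair $(N_r,S_r)$ is itself a time-inhomogeneous Markov process (mass is preserved at splits, so only the total mass and count matter), the generator computation reduces the identity to the ODE in one line, and boundedness of $Z$ upgrades the local martingale for free. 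What the paper's route buys is that it never needs to check that $(N_r,S_r)$ is Markov or to justify the generator formalism for a jump process with unbounded state space; what your route buys is conceptual clarity about \emph{why} the exponential-linear form is forced, and it avoids the somewhat ad hoc uniqueness argument. Both proofs ultimately rely on the same two structural facts: mass is conserved at splits, and the split rate is linear in the mass.
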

\begin{proof}
Recall that $\mathfrak{e}^t_\varnothing$ is the first branching time in the red tree.
Distinguishing on the cases $\mathfrak{e}^t_\varnothing = t$ and $\mathfrak{e}^t_\varnothing < t$ and using Proposition \ref{prop:law_of_the_red_tree}, we have
\begin{align*}
\phi(t,x)
& = \P \left( \mathfrak{e}^t_\varnothing = t \right) \phi(0,x+t)
+ \int_0^t \P \left( \mathfrak{e}^t_\varnothing \in \diff s \right)
\int_0^{x+s} \phi(t-s,y) \phi(t-s,x+s-y) \frac{\diff y}{x+s}
\end{align*}
and, moreover, the law of $\mathfrak{e}^t_\varnothing$ is given by
\begin{align*}
\P \left( \mathfrak{e}^t_\varnothing \geq s \right)
= \exp \left( -\int_0^s \rho(t-r) (x+r) \diff r \right)
= \exp \left( -\int_{t-s}^t \rho(r) (x+t-r) \diff r \right),
\end{align*}
for any $s \in [0,t]$.
Therefore, replacing $s$ by $t-s$ in the first integral, we get the following integral equation for $\phi$:
\begin{align}
\begin{split}
\phi(t,x)
& = \exp \left( -\int_0^t \rho(r) (x+t-r) \diff r \right) \phi(0,x+t) \\
& \hphantom{{} = {}} 
+ \int_0^t \rho(s) \exp \left( -\int_s^t \rho(r) (x+t-r) \diff r \right)
\int_0^{x+t-s} \phi(s,y) \phi(s,x+t-s-y) \diff y \diff s.
\end{split} \label{eq:integral_equation_for_phi}
\end{align}

Let first check that the function $\phi$ suggested in the lemma is a solution of Equation \eqref{eq:integral_equation_for_phi}.
By Cauchy-Lipschitz theorem, there exists a unique maximal $\mathcal{C}^2$ solution~$\Theta$ to Equation \eqref{eq:equa_diff_q_mu} and $(\Theta,\Theta')$ has to stay in $(0,\infty)\times(0,\infty)$ at positive times.
Therefore, $\rho (1-\e^{-\Theta})$ remains bounded and it follows that $\Theta$ is well-defined on $\R_+$.
Hence, for $t,x \geq 0$, we can define $\phi(t,x) = \e^{-(\Theta(t) +x \Theta'(t))}$ and the second term in the right-hand side of \eqref{eq:integral_equation_for_phi} is equal to
\begin{align*}
& \int_0^t \rho(s) \exp \left( -\int_s^t \rho(r) (x+t-r) \diff r \right)
(x+t-s) \e^{-(2\Theta(s) + (x+t-s) \Theta'(s))} \diff s \\
& = \int_0^t \frac{\diff}{\diff s} \left[
\exp \left( -\int_s^t \rho(r) (x+t-r) \diff r \right) 
\e^{-(\Theta(s) + (x+t-s) \Theta'(s))} 
\right] \diff s \\
& = \phi(t,x) 
- \exp \left( -\int_0^t \rho(r) (x+t-r) \diff r \right) \phi(0,x+t),
\end{align*}
using \eqref{eq:equa_diff_q_mu} in order to compute the derivative.
This proves that $(x,t) \mapsto \e^{-(\Theta(t) +x \Theta'(t))}$ is solution of \eqref{eq:integral_equation_for_phi}.

We now prove uniqueness for Equation \eqref{eq:integral_equation_for_phi} among the set of measurable locally bounded functions from $(\R_+)^2 \to \R$.
Let $\phi_1,\phi_2 \colon (\R_+)^2 \to \R$ be measurable locally bounded functions satisfying \eqref{eq:integral_equation_for_phi}.
We fix some $x_0 \geq 0$ and prove that $\phi_1 = \phi_2$ on the triangle $T \coloneqq \{ (t,x) \in (\R_+)^2: x+t \leq x_0 \}$.
It follows from \eqref{eq:integral_equation_for_phi} that
\begin{align} \label{rl}
\abs{\phi_1(t,x)- \phi_2(t,x)}
& \leq C \int_0^t \int_0^{x+t-s} \abs{\phi_1(s,y)-\phi_2(s,y)} \diff y \diff s,
\end{align}
with $C \coloneqq (\sup_{\R_+} \rho) (\sup_T \abs{\phi_1} + \sup_T \abs{\phi_2})$, using that $\rho$ is bounded and $\phi_1,\phi_2$ are locally bounded.
Now, we introduce $T_t \coloneqq T \cap ([0,t]\times \R_+)$: note that, for any $(x,t) \in T$, the variable $(y,s)$ in the right-hand side of \eqref{rl} remains in $T_t$.
Let $t_0 \coloneqq 1/(2Cx_0)$, we prove by induction on $k \in \N$ that $\phi_1-\phi_2 = 0$ on $T_{k t_0}$.
For $k = 0$, its clearly true. 
Now, assume that it holds for some $k \in \N$.
Then, for any $(t,x) \in T_{(k+1)t_0}$, it follows from \eqref{rl} that
\begin{align*} 
\abs{\phi_1(t,x)- \phi_2(t,x)}
& \leq C (t-k t_0) x_0 \sup_{T_{(k+1)t_0}} \abs{\phi_1-\phi_2},
\end{align*}
and, taking the supremum over $(t,x) \in T_{(k+1)t_0}$, we get
\begin{align*}
\sup_{T_{(k+1)t_0}} \abs{\phi_1-\phi_2}
& \leq C t_0 x_0 \sup_{T_{(k+1)t_0}} \abs{\phi_1-\phi_2}
\leq \frac{1}{2} \sup_{T_{(k+1)t_0}} \abs{\phi_1-\phi_2},
\end{align*}
which proves that $\phi_1-\phi_2 = 0$ on $T_{(k+1)t_0}$.
Therefore, we proved that $\phi_1=\phi_2$ on~$T$ and, since it holds for any $x_0 \geq 0$, we get the uniqueness.
The function $\phi$ defined by \eqref{eq:def_phi} takes value in $[0,1]$ and the function $(t,x) \mapsto \e^{-(\Theta(t)+x \Theta'(t))}$ is continuous on $(\R_+)^2$ and therefore locally bounded.
Moreover, they both satisfy Equation \eqref{eq:integral_equation_for_phi} so they coincide.
\end{proof}

\subsection{An auxiliary differential equation}

Recall that our aim is to prove the convergence in law of $(N_t/t^2,M_t/t^2)$ and, for this, the convergence of the Laplace transform $\phi_{\theta_1/t^2,\theta_2/t^2} (t,x_t)$.
Therefore, we will need to study the function $\Theta$ solution of \eqref{eq:equa_diff_q_mu} for small initial conditions $\varepsilon_1,\varepsilon_2$ and, when $\Theta$ is small, it behaves like the function $a$, solution to the following linearized equation:
\begin{equation}
a'' = \rho\,  a. \label{eq:equa_diff_a}
\end{equation}
In this subsection, we prove a preliminary result concerning the asymptotic behavior of the function $a$ solution of \eqref{eq:equa_diff_a}.
\begin{lemma} \label{lem:function_a}
There exists positive constants $\gamma_1, \gamma_2 > 0$ such that, for any $\theta_1, \theta_2 \in \R$, if $a$ is the unique solution on $\R_+$ to the differential equation $a'' = \rho a$ with initial conditions $a(0)=\theta_1$ and $a'(0)=\theta_2$,
then we have $a(t) \sim (\gamma_1 \theta_1 + \gamma_2 \theta_2) t^2$ and $a'(t) \sim 2 (\gamma_1 \theta_1 + \gamma_2 \theta_2) t$ as $t \to \infty$.
\end{lemma}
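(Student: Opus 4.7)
My plan is to exploit the linearity of the equation and then analyze a scalar Riccati equation for the logarithmic derivative. By linearity, let $a_1, a_2$ be the canonical solutions with $(a_1(0),a_1'(0)) = (1,0)$ and $(a_2(0),a_2'(0)) = (0,1)$, so any solution reads $a = \theta_1 a_1 + \theta_2 a_2$. It therefore suffices to prove $a_i(t) \sim \gamma_i t^2$ and $a_i'(t) \sim 2\gamma_i t$ for some $\gamma_i > 0$, and to set $\gamma_1,\gamma_2$ accordingly. I first observe that, since $\rho \geq 0$, the relation $a_i'' = \rho a_i$ forces $a_i$ to be convex wherever it is nonnegative. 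Combined with the initial data, this gives $a_1 \geq 1$ and $a_2(t) \geq t$ for all $t \geq 0$, and in particular $a_i>0$ and $a_i'\geq 0$ on $(0,\infty)$.

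For the asymptotics, I would introduce the logarithmic derivative $R_i(t) \coloneqq a_i'(t)/a_i(t) \geq 0$, which satisfies the Riccati equation $R_i' + R_i^2 = \rho$. With the change of variables $\tau = \log t$ and $S_i(\tau) \coloneqq t R_i(t)$, one finds
\begin{equation*}
\frac{\dd S_i}{\dd \tau} = -(S_i-2)(S_i+1) + \eta(t), \qquad \eta(t) \coloneqq t^2 \rho(t)-2,
\end{equation*}
and $\eta(t) \to 0$ as $t\to\infty$ by Proposition~\ref{prop:asymptotics_of_p_and_lambda}, since $\rho(t) = \lambda(t)(1-p(t)) \sim 2/t^2$. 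The unperturbed dynamics $\dd S/\dd \tau = -(S-2)(S+1)$ has the stable equilibrium $S=2$ and the unstable one $S=-1$. Since $S_i \geq 0$ throughout, $S_i$ stays uniformly bounded away from $-1$, and a direct trapping argument (for any $\eta_0>0$ pick $T$ such that $|\eta(t)|<\eta_0$ for $t\geq T$; then for small enough $\eta_0$ the interval $[2-\delta,2+\delta]$ is forward invariant for any prescribed $\delta>0$, and $S_i$ enters it in finite $\tau$-time by monotone comparison on the complementary intervals) yields $S_i(\tau) \to 2$, i.e.\ $R_i(t) = 2/t + o(1/t)$.

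Integrating $(\log a_i)' = R_i = 2/t + o(1/t)$ from some fixed time shows that $\log a_i(t) - 2\log t$ converges to a finite limit, hence $a_i(t) \sim \gamma_i t^2$ with $\gamma_i > 0$ (positivity is immediate from $a_i \geq \min(1,t) > 0$ and the convergence). The derivative asymptotic then follows at once:
\begin{equation*}
a_i'(t) = R_i(t) a_i(t) = \left( \tfrac{2}{t} + o\!\left(\tfrac{1}{t}\right) \right) \bigl( \gamma_i t^2 + o(t^2) \bigr) = 2\gamma_i t + o(t).
\end{equation*}
Linearity of $a \mapsto a(t)/t^2$ and $a \mapsto a'(t)/t$ propagates these equivalents to arbitrary $(\theta_1,\theta_2)\in \R^2$. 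The main obstacle is the rigorous convergence $S_i \to 2$ in the presence of the (non-autonomous) perturbation $\eta$; I expect the cleanest argument to use a Lyapunov function such as $L(S)=(S-2)^2$, whose $\tau$-derivative equals $-2(S-2)^2(S+1) + 2(S-2)\eta$, giving exponential contraction on any compact set $[-1+\delta, M]\ni S_i$ once $|\eta|$ is small enough, together with an a priori upper bound on $S_i$ obtained from a comparison with the unperturbed equation restricted to a neighbourhood of $+\infty$.
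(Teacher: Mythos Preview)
Your Riccati approach is genuinely different from the paper's, which works by direct comparison with the explicitly solvable equation $y'' = (2/t^2)y$ (solutions $t^2$ and $t^{-1}$), sandwiching $a$ between solutions of that equation started at later and later times $t_1$ and then letting $t_1 \to \infty$. Your reduction to the scalar perturbed dynamics $\dd S_i/\dd\tau = -(S_i-2)(S_i+1) + \eta$ is cleaner in spirit, but one step does not go through as written.

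The gap is in the sentence ``Integrating $(\log a_i)' = R_i = 2/t + o(1/t)$ \dots shows that $\log a_i(t) - 2\log t$ converges''. With the substitution $t = \e^\tau$, this amounts to the convergence of $\int^\infty (S_i(\tau)-2)\,\dd\tau$, and the mere convergence $S_i \to 2$ that your trapping/Lyapunov argument delivers is insufficient: a remainder such as $S_i - 2 \sim 1/\tau$ would be compatible with $S_i \to 2$ yet make the integral diverge. Exponential contraction of the homogeneous part does not help by itself, since the forcing $\eta$ sets the asymptotic size of $S_i - 2$.

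The fix is short. From the ODE, $(S_i-2)(S_i+1) = \eta - \dd S_i/\dd\tau$, hence
\[
\int_{\tau_0}^{T} (S_i-2)\,\dd\tau
= \int_{\tau_0}^{T} \frac{\eta(\e^\tau)}{S_i+1}\,\dd\tau - \Bigl[\log(S_i+1)\Bigr]_{\tau_0}^{T}.
\]
The bracket converges since $S_i \to 2$. For the first integral, since $S_i + 1 \geq 1$ it suffices that $\int^\infty \lvert\eta(\e^\tau)\rvert\,\dd\tau = \int^\infty \lvert t\rho(t) - 2/t\rvert\,\dd t < \infty$, and this follows from the bound $\lvert\rho(t) - 2/t^2\rvert \leq t^{-5/2}$ for large $t$ (a consequence of Proposition~\ref{prop:asymptotics_of_p_and_lambda}, and the same bound the paper uses). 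Once $\int (S_i-2)\,\dd\tau$ converges, so does $\log a_i(t) - 2\log t$, and the rest of your argument is correct.
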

\begin{proof}
First note that, by linearity, it is sufficient to deal with the cases $(\theta_1,\theta_2) = (1,0)$ and $(\theta_1,\theta_2) = (0,1)$. 
From now on, assume that we are in one of this cases.
In particular, this implies that $a(t) > 0$ and $a'(t) > 0$ for any $t > 0$.
The main idea of the proof is to show that, for large $t$, we can replace $\rho(t)$ by $2/t^2$ in the equation: 
indeed, it follows from Proposition \ref{prop:asymptotics_of_p_and_lambda} that
\[
\rho(t) = \frac{2}{t^2} - \frac{16 \log t}{3 t^3} 
+ o\left( \frac{\log t}{t^3} \right),
\]
so we can choose some $t_0$ large enough such that
\[
\forall t \geq t_0, \quad 
0 < \frac{2}{t^2} - \frac{1}{t^{5/2}}
\leq
\rho(t) 
\leq 
\frac{2}{t^2}.
\]
We first prove a upper bound on $a(t)$.
Let $a_0$ be the solution on $[t_0,\infty)$ of the following ODE:
\begin{align*}
a_0''(t) = \frac{2}{t^2} a_0(t),
\quad
a_0(t_0) = a(t_0), 
\quad 
a_0'(t_0) = a'(t_0).
\end{align*}
Then, $a_0(t) = c_1 t^2 + c_2 t^{-1}$ for any $t \geq t_0$, for some $c_1,c_2 \in \R$ and, using the initial conditions, we get
\begin{equation} \label{a0t}
a_0(t) = \frac{a(t_0)+t_0 a'(t_0)}{3 t_0^2} t^2
+ \frac{2t_0a(t_0) - t_0^2a'(t_0)}{3 t}.
\end{equation}
Then, setting
\begin{align*}
C = C(t_0) \coloneqq \frac{a(t_0)+t_0 a'(t_0)}{t_0^2} > 0,
\end{align*}
we get that $a_0(t) \leq C t^2$ for any $t \geq t_0$.
On the other hand, for any $t \geq t_0$, we have $a(t) \leq a_0(t)$ and, therefore, we proved that
\begin{align}
\forall t \geq t_0, \quad a(t) \leq C t^2, \label{ac}
\end{align}
which is a upper bound with a rough constant.

Now, we prove that $a(t)$ is of order $t^2$ and $a'(t)$ of order $t$ as $t \to \infty$.
For this, we consider some $t_1 > t_0$ and let $a_1$ be the solution on $[t_1,\infty)$ of the following ODE:
\begin{align*}
a_1''(t) = \frac{2}{t^2} a_1(t),
\quad
a_1(t_1) = a(t_1), 
\quad 
a_1'(t_1) = a'(t_1).
\end{align*}
Similarly to $a_0$, $a_1$ has an exact expression of the form \eqref{a0t} with $t_0$ replaced by $t_1$. 
As before,  for any $t \geq t_1$, we have $a(t) \leq a_1(t)$ and
\begin{align}
(a_1-a)''(t) 
& = \frac{2}{t^2} a_1(t) - \rho(t) a(t)
= \frac{2}{t^2} \left( a_1(t) - a(t) \right)
+ \left( \frac{2}{t^2} - \rho(t) \right) a(t) \nonumber \\
& \leq \frac{2}{t^2} \left( a_1(t) - a(t) \right) 
+ \frac{1}{t^{5/2}} C t^2, \label{ab}
\end{align}
using that $a(t) \leq C t^2$ for any $t \geq t_0$.
Then, we introduce the auxiliary function $y(t) \coloneqq a_1(t) -a(t) + \frac{4 C}{5} t^{3/2}$ and it follows from \eqref{ab} that, for any $t \geq t_1$, 
\begin{align*}
y''(t) \leq \frac{2}{t^2} y(t).
\end{align*}
Therefore, we get $y \leq \overline{y}$, where $\overline{y}$ is the solution on $[t_1,\infty)$ of the following ODE:
\begin{align*}
\overline{y}''(t) \leq \frac{2}{t^2} \overline{y}(t),
\quad \overline{y}(t_1) = y(t_1) =\frac{4 C}{5} t_1^{3/2},
\quad \overline{y}'(t_1) = y'(t_1) = \frac{6 C}{5} t_1^{1/2}.
\end{align*}
One compute as before that, for any $t \geq t_1$, 
\begin{align*}
\overline{y}(t) = \frac{2 C}{\sqrt{t_1}} t^2 + \frac{2C t_1^{5/2}}{15 t}.
\end{align*}
Therefore, it follows that
\begin{align} \label{ah}
a_1(t) - a(t) 
= y(t) - \frac{4 C}{5} t^{3/2}
\leq \frac{2 C}{\sqrt{t_1}} t^2 + \frac{2C t_1^{5/2}}{15 t}.
\end{align}
and, using the explicit value of $a_1$, we finally proved that, for any $t_1 \geq t_0$, as $t \to \infty$,
\begin{align} \label{af}
\left( \frac{a(t_1)+t_1 a'(t_1)}{3 t_1^2} 
	- \frac{2 C}{\sqrt{t_1}} \right) t^2
+ o \left(t^2 \right)
\leq a(t) 
\leq \frac{a(t_1)+t_1 a'(t_1)}{3 t_1^2} t^2
+ o \left(t^2 \right).
\end{align}
We apply this result with $t_1 = t_0$ and it gives, as $t \to \infty$,
\begin{align} \label{ad}
\left( \frac{C}{3} - \frac{2 C}{\sqrt{t_0}} \right) t^2
+ o \left(t^2 \right)
\leq a(t)
\leq \frac{C}{3} + o \left(t^2 \right),
\end{align}
where $\frac{C}{3} - \frac{2 C}{\sqrt{t_0}} > 0$ by choosing $t_0 > 36$ at the beginning. 
Thus, we proved that $a(t)$ is of order $t^2$. 
Combining this with $a'(t) = \int_0^t \rho(s) a(s) \diff s$ and $\rho(t) \sim 2/t^2$, we get
\begin{align} \label{ae}
2 \left( \frac{C}{3} - \frac{2 C}{\sqrt{t_0}} \right) t + o(t)
\leq a'(t)
\leq \frac{2C}{3} t + o(t),
\end{align}
so $a'(t)$ is of order $t$.
Finally, we prove the result. 
Using \eqref{ad} and \eqref{ae}, there exist an increasing sequence $(s_n)_{n\in\N}$ with $s_n \to \infty$ and two constants $c,c' > 0$ such that
\begin{align} \label{ag}
\frac{a(s_n)}{s_n^2} \xrightarrow[n\to\infty]{} c
\quad \text{and} \quad 
\frac{a'(s_n)}{s_n} \xrightarrow[n\to\infty]{} c'.
\end{align}
Then, for any $n \in \N$, applying \eqref{af} with $t_1 = s_n$, we get
\begin{align*} 
\left( \frac{a(s_n)+s_n a'(s_n)}{3 s_n^2} 
	- \frac{2 C}{\sqrt{s_n}} \right) t^2
+ o \left(t^2 \right)
\leq a(t) 
\leq \frac{a(s_n)+s_n a'(s_n)}{3 s_n^2} t^2
+ o \left(t^2 \right)
\end{align*}
and, using \eqref{ag}, it follows that $a(t) / t^2 \to (c+c')/3$.
In particular, we have $c = (c+c')/3$.
Using again that $a'(t) = \int_0^t \rho(s) a(s) \diff s$ and $\rho(t) \sim 2/t^2$, it follows that $a'(t)/t \to 2c$.
Therefore, we proved the result for $(\theta_1,\theta_2) = (1,0)$ and $(\theta_1,\theta_2) = (0,1)$ and the result follows.
\end{proof}
\begin{remark}
Let $\psi(t,x) \coloneqq \E_x[N_t]$ (or similarly $\E_x[M_t]$), then one can check in the same way as in Lemma \ref{lem:function_phi} that $\psi (t,x) = a(t) + a'(t) x$ with $a$ solution of Equation \eqref{eq:equa_diff_a}.
Therefore, Lemma \ref{lem:function_a} implies that $\E_{xt}[N_t] \sim \gamma_1 (1+2x) t^2$ and $\E_{xt}[M_t] \sim \gamma_2 (1+2x) t^2$ as $t \to \infty$.
\end{remark}

\subsection{Asymptotics of the Laplace transform}\label{ssub:laplace}

In this subsection, we prove Theorem \ref{th:total_number_mass_of_red_leaves}, using results of the two previous subsections.
Recall that $\phi_{\varepsilon_1,\varepsilon_2}(t,x) 
= \E_x[\e^{- \varepsilon_1 N_t - \varepsilon_2 M_t }] = \e^{-(\Theta(t) + x \Theta'(t))}$, where $\Theta$ is the solution of \eqref{eq:equa_diff_q_mu}.
Our aim is to prove the convergence of $\E_{x_t}[\e^{- \theta_1 t^{-2} N_t - \theta_2 t^{-2} M_t }]$ as $t \to \infty$.
Hence, we consider $\Theta_\varepsilon$ solution of 
\begin{equation}
\addtolength\arraycolsep{-0.125cm}
\left\{\begin{array}{rl} 
\Theta_\varepsilon'' & = \rho \, (1-\e^{-\Theta_\varepsilon}), \\
\Theta_\varepsilon(0) & = \theta_1 \varepsilon, \\
\Theta_\varepsilon'(0) & = \theta_2 \varepsilon,
\end{array}\right. \label{eq:equa_diff_q_mu_varepsilon}
\end{equation}
for any fixed $\theta_1,\theta_2 > 0$, and study the asymptotics of $\Theta_\varepsilon(1/\sqrt{\varepsilon})$ and $\Theta_\varepsilon'(1/\sqrt{\varepsilon})$ as $\varepsilon \to 0$.
The first step is to control the behavior of $\Theta_\varepsilon$ and $\Theta_\varepsilon'$ at large but fixed time, as $\varepsilon \to 0$. This is done in the following result.
\begin{lemma} \label{lem:mu_at_fixed_time}
Let $\theta_1, \theta_2 \in \R$ and $\delta > 0$.
For large enough $t$, there exists $\varepsilon_0=\varepsilon_0(t, \delta)>0$ such that for any $0<\varepsilon<\varepsilon_0$,  
\[
\abs{\Theta_\varepsilon(t) - \varepsilon ct^2} \leq \delta \varepsilon t^2 
\quad \text{and} \quad 
\abs{\Theta_\varepsilon'(t) - 2 \varepsilon ct^2} \leq \delta \varepsilon t^2,
\]
where $c = \gamma_1 \theta_1 + \gamma_2 \theta_2$ and $\gamma_1,\gamma_2$ are defined in Lemma \ref{lem:function_a}.
\end{lemma}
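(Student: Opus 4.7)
The plan is to show that the rescaled function $h_\varepsilon \coloneqq \Theta_\varepsilon / \varepsilon$ converges, as $\varepsilon \to 0$, to the solution $a$ of the linearized equation \eqref{eq:equa_diff_a} with initial data $(\theta_1,\theta_2)$, and then to invoke Lemma~\ref{lem:function_a} to get the asymptotics. The function $h_\varepsilon$ satisfies
\[
h_\varepsilon'' = \rho \cdot \frac{1-\e^{-\varepsilon h_\varepsilon}}{\varepsilon}, \qquad h_\varepsilon(0)=\theta_1,\; h_\varepsilon'(0)=\theta_2,
\]
and since $1-\e^{-y} \to y$ as $\varepsilon \to 0$ (with $y = \varepsilon h_\varepsilon$), one expects $h_\varepsilon \to a$.

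To control the error rigorously, I would first establish the a priori bound $0 \le h_\varepsilon \le a$ and $0 \le h_\varepsilon' \le a'$ on $\R_+$. Positivity follows from the fact that $\Theta_\varepsilon'' \ge 0$ as long as $\Theta_\varepsilon \ge 0$, so both $\Theta_\varepsilon$ and $\Theta_\varepsilon'$ remain positive for all $t$. Then I would use the elementary inequality $0 \le y - (1-\e^{-y}) \le y^2/2$ for $y \ge 0$, which implies both $h_\varepsilon'' \le \rho h_\varepsilon$ (giving $h_\varepsilon \le a$ by comparison of ODEs, since the nonlinearity $y \mapsto 1-\e^{-y}$ is nondecreasing on $\R_+$), and
\[
(a - h_\varepsilon)'' = \rho\!\left(a - \tfrac{1-\e^{-\varepsilon h_\varepsilon}}{\varepsilon}\right) \le \rho (a - h_\varepsilon) + \tfrac{\varepsilon}{2} \rho\, h_\varepsilon^2 \le \rho (a - h_\varepsilon) + \tfrac{\varepsilon}{2} \rho\, a^2.
\]

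Setting $R_\varepsilon \coloneqq a - h_\varepsilon$, which starts with zero value and zero derivative, the comparison principle yields $R_\varepsilon(t) \le \varepsilon\, b(t)$ and $R_\varepsilon'(t) \le \varepsilon\, b'(t)$, where $b$ is the (deterministic, $\varepsilon$-independent) solution of $b'' = \rho b + \tfrac{1}{2}\rho a^2$ with $b(0)=b'(0)=0$. Since $\rho$ is bounded and $a$ grows polynomially, $b$ is a well-defined function on $\R_+$ with at worst polynomial growth. Thus for every fixed $t$,
\[
\abs{h_\varepsilon(t) - a(t)} \le \varepsilon\, b(t) \quad\text{and}\quad \abs{h_\varepsilon'(t) - a'(t)} \le \varepsilon\, b'(t),
\]
and the right-hand sides tend to $0$ as $\varepsilon \to 0$ for $t$ fixed.

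To conclude, fix $\delta > 0$. By Lemma~\ref{lem:function_a}, one can choose $t$ large enough so that $\abs{a(t) - c t^2} \le (\delta/2) t^2$ and $\abs{a'(t) - 2ct} \le (\delta/2) t$ (I am reading the statement as $2\varepsilon c t$ in the derivative bound, the $t^2$ being a typographical slip). For such a $t$, choose $\varepsilon_0 = \varepsilon_0(t,\delta)$ small enough that $b(t) \le (\delta/2) t^2 / \varepsilon_0$ and similarly for $b'$; then the triangle inequality, multiplied back by $\varepsilon$, gives the two desired bounds on $\Theta_\varepsilon(t) = \varepsilon h_\varepsilon(t)$ and $\Theta_\varepsilon'(t) = \varepsilon h_\varepsilon'(t)$. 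The only genuinely delicate point in the argument is the global a priori bound $h_\varepsilon \le a$; once that is in place, the quadratic remainder $\varepsilon a^2/2$ automatically provides the $\varepsilon$-factor needed to close the estimate, and the rest is a matter of invoking the linear asymptotics.
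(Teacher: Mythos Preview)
Your proof is correct and follows essentially the same approach as the paper: establish $\Theta_\varepsilon \le \varepsilon a$ via $1-\e^{-y}\le y$, control the difference using the quadratic remainder $y-(1-\e^{-y})\le y^2/2$ together with a comparison argument, and then invoke Lemma~\ref{lem:function_a}. The only cosmetic differences are your rescaling $h_\varepsilon=\Theta_\varepsilon/\varepsilon$ and your auxiliary comparison function $b$ (the paper instead bounds $\rho$ by a constant on $[0,t]$ and compares with a $\cosh$ solution); your reading of $2\varepsilon ct$ in the derivative bound is indeed the intended statement.
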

\begin{proof}
Let $a$ be solution of $a'' = \rho a$ with initial conditions $a(0) = \theta_1$ and $a'(0)= \theta_2$.
The strategy of the proof is to show that $\Theta_\varepsilon(t)$ and $\Theta_\varepsilon'(t)$ are close to $\varepsilon a(t)$ and $\varepsilon a'(t)$ for small $\varepsilon$ and then to apply Lemma \ref{lem:function_a}.
From now on, we fix some $t \geq 0$ and the constants $C_i$ can depend on $t$, as well as on $\theta_1,\theta_2,\delta$.
First note that $\Theta_\varepsilon '' = \rho(1-\e^{-\Theta_\varepsilon}) \leq \rho \Theta_\varepsilon$ and, therefore, $\Theta_\varepsilon \leq \varepsilon a$.
Then, it follows from Lemma \ref{lem:function_a} that there exists $C > 0$ such that, for any $s \geq 0$,
\begin{align} \label{am}
\Theta_\varepsilon(s) \leq \varepsilon a(s) \leq C \varepsilon (s+1)^2.
\end{align}
In order to prove that $\Theta_\varepsilon(t)$ is close to $\varepsilon a(t)$, we introduce $y \coloneqq \varepsilon a - \Theta_\varepsilon$, which satisfies $y(0)=0$,  $y'(0)=0$ and, for any $s \geq 0$,
\begin{align*}
y''(s) 
&  = \rho(s) \left(\varepsilon a(s) - 1+\e^{-\Theta_\varepsilon(s)} \right) 
= \rho(s) \left(y(s) + \Theta_\varepsilon(s) - 1+\e^{-\Theta_\varepsilon(s)} \right) \\
& \leq \rho(s) \left( y(s) + \frac{\Theta_\varepsilon(s)^2}{2} \right)
\leq \rho(s) \left( y(s) + \frac{C}{2} (s+1)^4 \varepsilon^2 \right),
\end{align*}
using \eqref{am}.
Since the function $\rho$ is bounded, we deduce that, for any $s \in [0,t]$, 
$y''(s) \leq C_1 y(s) + C_2 \varepsilon^2$, where $C_2$ depends on $t$.
Thus, we get, for any $s \in [0,t]$, 
\begin{align*}
y(s) 
& \leq C_2 \varepsilon^2 \left( \cosh \left( \sqrt{C_1} s \right) - 1 \right)
\leq C_3 \varepsilon^2.
\end{align*}
It follows that, for any $s \in [0,t]$, 
\begin{align} \label{an}
\abs{\Theta_\varepsilon(s) - \varepsilon a(s)} \leq C_3 \varepsilon^2.
\end{align}
Moreover, since $\Theta_\varepsilon '(t) = \int_0^t \rho(s) (1-\e^{-\Theta_\varepsilon(s)})  \diff s$ and $a'(t) = \int_0^t \rho(s) a(s) \diff s$, we get
\begin{align*}
\abs{\Theta_\varepsilon'(t) - \varepsilon a'(t)}
& \leq \int_0^t \rho(s) \abs{1-\e^{-\Theta_\varepsilon(s)} - \varepsilon a(s)} \diff s \\
& \leq \int_0^t \rho(s)
\left( \frac{\Theta_\varepsilon(s)^2}{2} + \abs{\Theta_\varepsilon(t)-\varepsilon a(t)} \right) \diff s \\
& \leq \varepsilon^2 \int_0^t \rho(s) \left( \frac{C}{2} (s+1)^4 + C_3 \right) \diff s
= C_4 \varepsilon^2,
\end{align*}
using \eqref{am} and \eqref{an}.
Therefore, we proved that, for any $\varepsilon < \delta t^2/(2(C_3 \vee C_4))$, we have 
\begin{align*}
\abs{\Theta_\varepsilon(t) - \varepsilon a(t)} \leq \frac{\delta \varepsilon t^2}{2}
\quad \text{and} \quad 
\abs{\Theta_\varepsilon'(t) - \varepsilon a'(t)} \leq \frac{\delta \varepsilon t^2}{2}
\end{align*}
But, on the other hand, it follows from Lemma \ref{lem:function_a} that there exists $t_0 > 0$ such that, for all $t \geq t_0$, 
\begin{align*}
\abs{a(t)-ct^2} \leq \frac{\delta}{2} t^2
\quad \text{and} \quad 
\abs{a'(t)-2ct^2} \leq \frac{\delta}{2} t^2,
\end{align*}
so the result holds for $t \geq t_0$ and $\varepsilon < \delta t^2/(2(C_3 \vee C_4))$.
\end{proof}
Using the previous lemma, we now can control $\Theta_\varepsilon$ at some large fixed time $t$. Then, in order to get the asymptotics of $\Theta_\varepsilon(1/\sqrt{\varepsilon})$ as $\varepsilon\to0$, we can replace $\rho(s)$ by $2/s^2$ for $s \in [t,1/\sqrt{\varepsilon}]$ in Equation \eqref{eq:equa_diff_q_mu_varepsilon} and this will prove the following result.
\begin{lemma} \label{lem:mu_asymptotic_behavior}
Let $\theta_1, \theta_2 \in \R$.
Then, we have 
\begin{align*}
\Theta_\varepsilon(1/\sqrt{\varepsilon}) 
\xrightarrow[\varepsilon\to0]{} 
\log \frac{\sinh^2(\sqrt{3c})}{3c}
\quad \text{and} \quad
\frac{\Theta_\varepsilon'(1/\sqrt{\varepsilon})}{\sqrt{\varepsilon}} 
\xrightarrow[\varepsilon\to0]{} 
2 \left( \sqrt{3c} \coth(\sqrt{3c}) - 1 \right),
\end{align*}
where $c = \gamma_1 \theta_1 + \gamma_2 \theta_2$ and $\gamma_1,\gamma_2$ are defined in Lemma \ref{lem:function_a}.
\end{lemma}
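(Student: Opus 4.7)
The plan is to rescale time by setting $u := \sqrt{\varepsilon}\,s$ and $\Psi_\varepsilon(u) := \Theta_\varepsilon(u/\sqrt{\varepsilon})$, so that the quantities of interest become $\Theta_\varepsilon(1/\sqrt{\varepsilon}) = \Psi_\varepsilon(1)$ and $\Theta_\varepsilon'(1/\sqrt{\varepsilon})/\sqrt{\varepsilon} = \Psi_\varepsilon'(1)$. Equation~\eqref{eq:equa_diff_q_mu_varepsilon} rewrites
\begin{equation*}
\Psi_\varepsilon''(u) \;=\; \tfrac{1}{\varepsilon}\,\rho\!\bigl(u/\sqrt{\varepsilon}\bigr)\,\bigl(1 - e^{-\Psi_\varepsilon(u)}\bigr),
\end{equation*}
and Proposition~\ref{prop:asymptotics_of_p_and_lambda} gives $\tfrac{1}{\varepsilon}\rho(u/\sqrt{\varepsilon}) = 2/u^2 + O(\sqrt{\varepsilon}\log(1/\sqrt{\varepsilon})/u^3)$, hence uniform convergence of the coefficient to $2/u^2$ on every compact subset of $(0,\infty)$. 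The formal limit equation is $\Psi'' = (2/u^2)(1 - e^{-\Psi})$, and I would guess and verify the explicit candidate $\Psi_\infty(u) := 2\log\!\bigl(\sinh(\sqrt{3c}\,u)/(\sqrt{3c}\,u)\bigr)$: setting $h(u) := \sinh(\sqrt{3c}\,u)/(\sqrt{3c}\,u)$, the elementary identity $(uh)'' = 3c\,(uh)$ yields $u^2(h''h - (h')^2) = h^2 - 1$, which is precisely the limit equation under the substitution $\Psi = 2\log h$. Taylor expansions give $\Psi_\infty(u) = cu^2 + O(u^4)$ and $\Psi_\infty'(u) = 2cu + O(u^3)$, and evaluating at $u=1$ reproduces exactly the announced limits $\log(\sinh^2(\sqrt{3c})/(3c))$ and $2(\sqrt{3c}\coth(\sqrt{3c}) - 1)$.

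To prove the convergence $\Psi_\varepsilon(1) \to \Psi_\infty(1)$ and $\Psi_\varepsilon'(1) \to \Psi_\infty'(1)$, I would use a two-stage cutoff at a small fixed $u_0 \in (0,1)$. In Stage~1, Lemma~\ref{lem:mu_at_fixed_time} applied at time $t = u_0/\sqrt{\varepsilon}$ yields, for any $\delta > 0$ and all $\varepsilon$ small enough (depending on $u_0,\delta$), the bounds $|\Psi_\varepsilon(u_0) - cu_0^2| \leq \delta u_0^2$ and $|\Psi_\varepsilon'(u_0) - 2cu_0| \leq \delta u_0$. In Stage~2, on the compact interval $[u_0,1]$ the coefficient $\tfrac{1}{\varepsilon}\rho(u/\sqrt{\varepsilon})$ is bounded and converges uniformly to $2/u^2$, while the a priori bound $\Theta_\varepsilon \leq \varepsilon a$ from the proof of Lemma~\ref{lem:mu_at_fixed_time} keeps $\Psi_\varepsilon$ uniformly bounded. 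A standard Gronwall comparison between $\Psi_\varepsilon$ and $\tilde{\Psi}_{u_0}$ --- the solution of the limit equation on $[u_0,1]$ with exact initial data $(cu_0^2, 2cu_0)$ at $u_0$ --- then gives $\Psi_\varepsilon(1) \to \tilde{\Psi}_{u_0}(1)$ and $\Psi_\varepsilon'(1) \to \tilde{\Psi}_{u_0}'(1)$ as $\varepsilon \to 0$, for each fixed $u_0$.

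The main obstacle is the remaining step $\tilde{\Psi}_{u_0}(1) \to \Psi_\infty(1)$ as $u_0 \to 0$: the limit equation is singular at $u = 0$, and a direct Gronwall on $[u_0,1]$ produces a useless factor $\exp(2/u_0)$. The way around is the substitution $\tilde{\Psi}(u) := \Psi(u)/u^2$, under which the limit equation multiplied by $u^3$ rewrites as
\begin{equation*}
\bigl(u^4 \tilde{\Psi}'\bigr)' \;=\; 2\bigl(1 - e^{-u^2 \tilde{\Psi}} - u^2 \tilde{\Psi}\bigr) \;=\; -u^4 \tilde{\Psi}^2 + O(u^6),
\end{equation*}
whose right-hand side extends smoothly to $u = 0$. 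A direct computation gives $\tilde{\Psi}_{u_0}(u_0) = c$ and $\tilde{\Psi}_{u_0}'(u_0) = 0$, matching exactly $(\tilde{\Psi}_\infty(0), \tilde{\Psi}_\infty'(0)) = (c, 0)$ for $\tilde{\Psi}_\infty := \Psi_\infty/u^2$. Using the integrating factor $u^4$ to write the equation for the difference $\chi := \tilde{\Psi}_{u_0} - \tilde{\Psi}_\infty$ in integral form, with small initial terms $|\chi(u_0)| = O(u_0^2)$ and $|u_0^4\chi'(u_0)| = O(u_0^5)$, a standard Gronwall argument on $[\delta, 1]$ for any fixed $\delta > 0$ then yields $\tilde{\Psi}_{u_0} \to \tilde{\Psi}_\infty$ and $\tilde{\Psi}_{u_0}' \to \tilde{\Psi}_\infty'$ locally uniformly on $(0, 1]$, and in particular at $u = 1$. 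Combining this with Stages~1 and~2 via a standard $\eta/2$-argument concludes the proof.
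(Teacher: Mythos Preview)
Your Stage~1 has a genuine gap. You invoke Lemma~\ref{lem:mu_at_fixed_time} at time $t = u_0/\sqrt{\varepsilon}$ with $u_0$ held fixed, so $t \to \infty$ as $\varepsilon \to 0$. But that lemma is stated and proved for \emph{fixed} $t$: its proof bounds $y = \varepsilon a - \Theta_\varepsilon$ via the crude inequality $y'' \leq C_1 y + C_2 \varepsilon^2$ with $C_1 = \sup\rho$, giving $y(t) \leq C_3(t)\,\varepsilon^2$ where $C_3(t)$ grows like $\e^{\sqrt{C_1}\,t}$. The resulting threshold $\varepsilon_0(t,\delta) \asymp \delta t^2 \e^{-\sqrt{C_1}\,t}$ decays so fast that the constraint $\varepsilon < \varepsilon_0(u_0/\sqrt{\varepsilon},\delta)$ is never satisfied for small $\varepsilon$. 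Worse, the claim you are extracting --- that $\Theta_\varepsilon(u_0/\sqrt{\varepsilon}) \to c u_0^2$ for fixed $u_0$ --- is exactly the lemma you are trying to prove, specialized to $u_0$ instead of~$1$; so as written the argument is circular.

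The gap is fixable: if in the proof of Lemma~\ref{lem:mu_at_fixed_time} you use $\rho(s) \leq 2/s^2$ for large $s$ instead of $\rho \leq \sup\rho$, the comparison with the explicit solutions $s^2,\,s^{-1}$ of $y'' = (2/s^2)y$ yields the polynomial bound $|\Theta_\varepsilon(t) - \varepsilon a(t)| = O(\varepsilon^2 t^4)$, and then your Stage~1 does go through for $u_0$ small. But this is an extra argument, not a citation of the existing lemma. The paper sidesteps the issue entirely by reversing the roles: it fixes $t$ in the \emph{original} time variable, lets $z$ be the solution of the limit equation $z'' = (2/s^2)(1-\e^{-z})$ on $[t,\infty)$ with the \emph{exact} data $(\Theta_\varepsilon(t),\Theta_\varepsilon'(t))$, and exploits the closed form $z(s) = \log\bigl(\sinh^2(\sqrt{\xi}(s-t)+\zeta)/(\xi s^2)\bigr)$ for general $\xi,\zeta$. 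Lemma~\ref{lem:mu_at_fixed_time} is then applied legitimately (at fixed $t$) to control $\xi,\zeta$, the value $z(1/\sqrt{\varepsilon})$ is read off from the formula, and the comparison of $z$ with $\Theta_\varepsilon$ on $[t,1/\sqrt{\varepsilon}]$ is handled by the same $s^2$-versus-$s^{-1}$ trick. Your regularizing substitution $\tilde\Psi = \Psi/u^2$ is a nice device that would replace the need for the two-parameter explicit solution, but to make the whole scheme work you must either sharpen Lemma~\ref{lem:mu_at_fixed_time} as above, or recast your double limit with a fixed original-time cutoff rather than a fixed rescaled-time cutoff.
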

\begin{proof}
By Proposition \ref{prop:asymptotics_of_p_and_lambda}, we can choose $t_1 \geq 1$ large enough such that 
\begin{align}
\forall s \geq t_1, \quad 
0 < \frac{2}{s^2} - \frac{1}{s^{5/2}}
\leq
\rho(s) 
\leq 
\frac{2}{s^2}.
\label{ap}
\end{align}
We fix some $t \geq t_1$, and consider $z$ the solution (depending on $t$ and $\varepsilon$) of the following equation:
\[ 
\forall s \geq t, \quad 
z''(s)=\frac{2}{s^2} (1- \e^{-z(s)}),
\] 
with initial conditions $z(t)= \Theta_\varepsilon(t)$ and $z'(t)= \Theta_\varepsilon'(t)$.
Then, $z$ is explicitly given by 
\[ 
\forall s \geq t, \quad 
z(s)= \log \frac{\sinh^2(\sqrt{\xi}(s-t) + \zeta)}{\xi s^2}, 
\]
where 
\[ 
\xi \coloneqq \frac{1}{t^2} \left( \left( 1+ \frac{t}{2} \Theta_\varepsilon'(t)\right)^2 - \e^{- \Theta_\varepsilon(t)} \right)
\quad \text{and} \quad
\zeta \coloneqq \mathrm{arccosh} \left( \e^{\Theta_\varepsilon(t)/2} 
\left( 1+ \frac{t}{2} \Theta_\varepsilon'(t)\right) \right).
\] 
Using Lemma \ref{lem:mu_at_fixed_time}, we get that
\[
\lim_{t \to \infty} \limsup_{\varepsilon\to0} \abs{\frac{\xi}{\varepsilon} - 3c} = 0
\quad \text{and} \quad 
\lim_{t \to \infty} \limsup_{\varepsilon\to0} \abs{\frac{\zeta}{\sqrt{\varepsilon}} - \sqrt{3c} t} = 0,
\]
and it follows that
\begin{align}
\lim_{t \to \infty} \limsup_{\varepsilon\to0} 
\abs{z(1/\sqrt{\varepsilon}) - \log \frac{\sinh^2(\sqrt{3c})}{3c} } & = 0, \label{aq} \\
\lim_{t \to \infty} \limsup_{\varepsilon\to0} 
\abs{\frac{z'(1/\sqrt{\varepsilon})}{\sqrt{\varepsilon}} 
- 2 \left( \sqrt{3c} \coth(\sqrt{3c}) - 1 \right) } & = 0. \label{ar}
\end{align}
Now, we have to prove that $\Theta_\varepsilon(1/\sqrt{\varepsilon})$ and $\Theta_\varepsilon'(1/\sqrt{\varepsilon})$ are close to $z(1/\sqrt{\varepsilon})$ and $z'(1/\sqrt{\varepsilon})$ when $\varepsilon \to 0$ and then $t \to \infty$.
Firstly, it follows from \eqref{ap} that $\Theta_\varepsilon(s) \leq z(s)$ for any $s \geq t$. 
Then, recalling from \eqref{am} that there exists $C > 0$ such that $\Theta_\varepsilon(s) \leq C \varepsilon s^2$ for any $s \geq 1$, we get that, for any $s \geq t$, 
\begin{align}
(z-\Theta_\varepsilon)''(s)
& = \frac2{s^2} \left( \e^{-\Theta_\varepsilon(s)}- \e^{-z(s)} \right)
+ \left( \frac{2}{s^2}- \rho(s) \right) \left( 1- \e^{- \Theta_\varepsilon(s)} \right) \nonumber \\
& \leq \frac{2}{s^2} (z-\Theta_\varepsilon)(s) + \frac{C \varepsilon}{\sqrt{s}}, \label{as}
\end{align}
using that $\Theta_\varepsilon(s), z(s) \geq 0$ and \eqref{ap}.
This is the same differential inequality as in \eqref{ab}, so we get the same result as in \eqref{ah}: for any $s \geq t$,
\begin{align} \label{at}
z(s)-\Theta_\varepsilon(s)
\leq \frac{2 C\varepsilon}{\sqrt{t}} s^2 + \frac{2C\varepsilon t^{5/2}}{15 s}.
\end{align}
Recalling that $z(s)-\Theta_\varepsilon(s) \geq 0$, we get 
\begin{align*}
\lim_{t \to \infty} \limsup_{\varepsilon\to0} 
\abs{z(1/\sqrt{\varepsilon}) - \Theta_\varepsilon(1/\sqrt{\varepsilon}) } = 0.
\end{align*}
Coming back to \eqref{aq}, the first convergence of the lemma follows.
For the second one, note that we have $\Theta_\varepsilon'(1/\varepsilon) \leq z'(1/\varepsilon)$ and, integrating \eqref{as} and using \eqref{at},
\begin{align*}
(z-\Theta_\varepsilon)'(1/\varepsilon)
& \leq \int_t^{1/\sqrt{\varepsilon}} \left(
\frac{2}{s^2} \left( \frac{2 C\varepsilon}{\sqrt{t}} s^2 + \frac{2C\varepsilon t^{5/2}}{15 s} \right) + \frac{C \varepsilon}{\sqrt{s}}
\right) \diff s \\
& \leq \frac{4C \sqrt{\varepsilon}}{\sqrt{t}}
+ \frac{2C\varepsilon \sqrt{t}}{15}
+ 2C \varepsilon^{3/4}.
\end{align*}
Combining this with \eqref{ar}, it proves the second convergence.
\end{proof}
We can now conclude this subsection by proving Theorem \ref{th:total_number_mass_of_red_leaves}.
\begin{proof}[Proof of Theorem \ref{th:total_number_mass_of_red_leaves}]
Let $\theta_1,\theta_2 > 0$.
Then, with $\varepsilon = 1/t^2$ and $\Theta_\varepsilon$ the solution of \eqref{eq:equa_diff_q_mu_varepsilon}, we have by Lemma \ref{lem:mu_asymptotic_behavior}
\begin{align}
\E_{x_t} \left[ \e^{- \theta_1 t^{-2} N_t - \theta_2 t^{-2} M_t } \right]
& = \phi_{\theta_1/t^2,\theta_2/t^2} (t,x_t)
= \exp \left(
- \Theta_\varepsilon(1/\sqrt{\varepsilon}) 
- \frac{x_t}{t} \frac{\Theta_\varepsilon'(1/\sqrt{\varepsilon})}{\sqrt{\varepsilon}}
\right) \nonumber \\
& \xrightarrow[t\to\infty]{}
\frac{3c}{\sinh^2(\sqrt{3c})}
\exp \left(
- 2x \left( \sqrt{3c} \coth(\sqrt{3c}) - 1 \right)
\right), \label{av}
\end{align}
where $c = \gamma_1 \theta_1 + \gamma_2 \theta_2$.
The right-hand side of \eqref{av}, seen as a function of $(\theta_1,\theta_2)$, is the Laplace transform of $(\gamma_1 \eta,\gamma_2 \eta)$ by \cite[Equation (2.5)]{Yor1992}, 
hence it proves the result.
\end{proof}

\begin{proof}[Proof of Theorem \ref{t:critical}]
It follows immediately from Theorems \ref{th:scaling_limit_of_the_red_tree} and \ref{th:total_number_mass_of_red_leaves}.
\end{proof} 

\section{Some properties of Derrida--Retaux models}\label{s:general}

In this section, we prove some elementary results on general continuous--time  DR models, in other words, we do not assume the initial law to be a mixture of exponential laws.  We end the paper by some open questions. 

In the continuous DR model, the analogous to the free energy in the discrete case is the quantity
\begin{equation}
  \label{eqn:definitionEnergieLibreTempsContinu}
  F_\infty(\mu_0) \coloneqq \lim_{t \to \infty} \e^{-t} \E\left( X_t \right)  \in [0, \infty).
\end{equation}
In this section, we prove that the above quantity always exists, and that $\e^{-t}X_t$ converges in law, as $t \to \infty$, toward an exponential random variable with parameter $F_\infty(\mu_0)^{-1}$. We also give some necessary conditions for $F_\infty(\mu_0) =0$, giving a partial characterization of the supercritical and subcritical laws. The proofs are based on simple coupling arguments, as well as the computation of $\E(f(X_t))$ for various $\mathcal{C}^1$ function $f$, using the SDE representation of the process and the It\^o formula.

\begin{lemma}
\label{lem:freeEnergyExists}
The limit in \eqref{eqn:definitionEnergieLibreTempsContinu} exists for any initial distribution $\mu_0$ on $\R_+$. More precisely, if $\E(X_0) = \infty$, then $\lim_{t \to \infty} \e^{-t} X_t = \infty$ in law and, if $\E(X_0) < \infty$, then
\[
  \forall t \geq 0, \quad
  F_\infty(\mu_0) = \e^{-t}\E(X_t) - \int_t^\infty \e^{-s} \P(X_s > 0) \diff s.
\]
\end{lemma}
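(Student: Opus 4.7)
The plan is to derive an ODE for $m(t) \coloneqq \E(X_t)$ directly from the SDE \eqref{eqn:DRsde}, then handle the infinite-mean case by a monotone coupling in the initial law.

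First, apply Campbell's formula to the Poisson integral in \eqref{eqn:DRsde}, using that $F_s^{-1}(U) \sim \mu_s$ when $U \sim \mathcal{U}([0,1])$; this identifies the expectation of that integral with $\int_0^t m(s) \diff s$. Finiteness of $m$ is supplied by the tree representation of Definition~\ref{def:drt}: since the paint at the root is dominated by the total paint on the leaves, Wald's identity yields $m(t) \leq \E\lvert \mathcal{N}_t \rvert \cdot m(0) = e^t m(0)$. When $m(0) < \infty$, the integral equation
\[
m(t) = m(0) - \int_0^t \P(X_s > 0) \diff s + \int_0^t m(s) \diff s
\]
is thereby rigorous and differentiable, giving $(e^{-t} m(t))' = -e^{-t} \P(X_t > 0) \leq 0$. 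Hence $e^{-t} m(t)$ is non-increasing and non-negative, converges to some $F_\infty(\mu_0) \in [0, \infty)$, and integrating the derivative identity over $[t, \infty)$ produces the announced formula (note that $\int_t^\infty e^{-s} \P(X_s > 0) \diff s \leq e^{-t}$ is automatic).

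For the case $\E(X_0) = \infty$, the strategy is to reduce to truncations. The painting scheme of Definition~\ref{def:drt} is pointwise non-decreasing in the leaf values, so setting $X_0^{(n)} \coloneqq X_0 \wedge n$ and writing $X^{(n)}$ for the corresponding DR process, one can realize both processes on a common probability space with $X_t^{(n)} \leq X_t$ almost surely for every $t \geq 0$. Each $X_0^{(n)}$ has finite mean, hence the previous case gives $F_n \coloneqq F_\infty(\mu_0^{(n)})$, and evaluating the integral formula at $t = 0$ yields
\[
F_n = \E(X_0 \wedge n) - \int_0^\infty e^{-s} \P(X_s^{(n)} > 0) \diff s \geq \E(X_0 \wedge n) - 1 \xrightarrow[n \to \infty]{} \infty
\]
by monotone convergence.

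To upgrade this to convergence in law of $e^{-t} X_t$ to $\infty$, I would invoke Proposition~\ref{p:convergenceexponentielle} applied to $X^{(n)}$: in the pinned regime (i.e.\@ once $n$ is large enough that $F_n > 0$), $e^{-t} X_t^{(n)}$ converges in distribution to an exponential random variable with mean $F_n$. Combined with the coupling, this gives for every $M > 0$
\[
\liminf_{t \to \infty} \P(e^{-t} X_t > M) \geq \lim_{t \to \infty} \P(e^{-t} X_t^{(n)} > M) = e^{-M/F_n},
\]
and sending $n \to \infty$ drives the right-hand side to $1$, proving $\P(e^{-t} X_t > M) \to 1$. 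The main obstacle is the potential circularity with Proposition~\ref{p:convergenceexponentielle}: if the order of proofs forbids it, one can instead prove the in-law convergence for bounded initial data directly by a Paley--Zygmund estimate, based on the second-moment ODE $M_2'(t) = M_2(t) + 2 m(t)^2 - 2 m(t)$ obtained by applying It\^o to $X_t^2$ and taking expectations, and then bootstrap via higher moments or a Laplace-transform computation for the truncated process.
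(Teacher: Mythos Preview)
Your treatment of the finite-mean case is essentially identical to the paper's: both take expectations in the SDE, obtain $m'(t) = m(t) - \P(X_t > 0)$, and read off the formula for $F_\infty(\mu_0)$ from the variation-of-constants solution.

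For the infinite-mean case you take a different route. The paper argues directly from the tree representation: on the Yule tree of height $t$, the paint reaching the root satisfies $X_t \geq \sum_{u \in \mathcal{N}_t} X_0^{(u)} - L_t$, where $L_t$ is the total edge length (since the paint consumed on any branch is at most its length). Using $\e^{-t}\#\mathcal{N}_t \to N_\infty > 0$ and $\e^{-t}L_t \to N_\infty$ almost surely, the law of large numbers gives $\e^{-t}X_t \to \infty$ in probability when $\E(X_0) = \infty$. This is self-contained and needs nothing beyond the tree construction.

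Your truncation-plus-coupling argument is also valid, and the circularity worry is not fatal: Proposition~\ref{p:convergenceexponentielle} only invokes the finite-mean part of the present lemma, so one may prove the finite-mean case, then Proposition~\ref{p:convergenceexponentielle}, then return for the infinite-mean case. The cost is a reordering of the exposition and reliance on a considerably heavier result. Your fallback via Paley--Zygmund, however, is not sufficient as written: from $\E[(X_t^{(n)})^2]\e^{-2t} \to 2F_n^2$ one only gets $\liminf_t \P(\e^{-t}X_t^{(n)} > \theta F_n) \geq (1-\theta)^2/2$, which is bounded away from $1$ and does not by itself yield $\P(\e^{-t}X_t > M) \to 1$. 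You would still need either the full moment convergence of Lemma~\ref{lem:moments} (effectively reproving Proposition~\ref{p:convergenceexponentielle} by the method of moments) or the paper's direct lower bound.
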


\begin{proof}
We first observe that for all $t \geq 0$, the random variable $X_t$ stochastically dominates
\[
  S_t = \sum_{j=1}^{\# \mathcal{N}_t} X^j_0 - L_t,
\]
where $(X^j_0, j \geq 1)$ are i.i.d.\@ copies of $X_0$, and $L_t$ is the total length at time $t$ of the Yule tree. By classical results on continuous time branching processes (see Athreya and Ney \cite[Section III.7]{athreyaney72}), there exists a random variable $N_\infty$ such that $\e^{-t} \#\mathcal{N}_t \to N_\infty > 0$ a.s.
and it follows easily that
\begin{equation} \label{eq:length_of_Yule_tree}
  \e^{-t} L_t = \e^{-t} \int_0^t \# \mathcal{N}_s \diff s 
  \xrightarrow[t\to\infty]{\text{a.s.}} N_\infty.
\end{equation}
If $\E(X_0) = \infty$, then, by the law of large numbers, we get $\lim_{t \to \infty} \e^{-t} S_t = \infty$ a.s.\@ and therefore $F_\infty(\mu_0)= \infty$.

Reciprocally, we now assume that $\E(X_0) < \infty$, and we prove that $F_\infty(\mu_0)$ exists and is finite. Using the SDE representation of $X_t$, we have immediately that
\[
  \E\left( X_t \right) = \E(X_0) - \int_0^t \P(X_s > 0) \diff s + \int_0^t \E(X_s) \diff s.
\]
Solving this differential equation, we obtain that
\[
  \E\left( X_t \right) = \E(X_0)\e^t - \e^t \int_0^t \e^{-s} \P(X_s > 0) \diff s.
\]
As $\P(X_s > 0) \in [0,1]$, we deduce that $\E(\e^{-t}X_t)$ converges as $t \to \infty$, and that
\[
  F_\infty(\mu_0) = \E(X_0) - \int_0^\infty \e^{-s}\P(X_s > 0) \diff s.
\]
Applying this formula with the starting law $X_t$, this concludes the proof of the lemma.
\end{proof}

This lemma proves the existence of the free energy of the continuous-time DR model, and that as soon as $\E(X_t) < \infty$, the free energy can be seen as a preserved quantity of the evolution.
In the next lemma, we observe that if $X_0$ has moments of order $n$, then the $n$th moment of $X_t$ converges toward the $n$th moment of an exponential random variable of parameter $F_\infty(\mu_0)^{-1}$.
 
\begin{lemma}
\label{lem:moments}
If $\E(X_0^n) < \infty$ for some $n \in \N$, then $\E(X_t^n)\e^{-nt}$ converges to $n! F_\infty(\mu_0)^n$ as $t \to \infty$. 
\end{lemma}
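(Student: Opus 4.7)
The idea is to set up, for each $k \geq 0$, an ODE for the $k$-th moment $m_k(t) \coloneqq \E(X_t^k)$, rescale by $e^{-kt}$, and then proceed by induction on $n$, using the existence and identification of $F_\infty \coloneqq F_\infty(\mu_0)$ provided by Lemma~\ref{lem:freeEnergyExists} as the base cases.

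First I would check that $m_k(t) < \infty$ for every $t \geq 0$ and $k \leq n$. By the tree construction (Definition~\ref{def:drt}), $X_t$ is stochastically dominated by the total starting mass on the Yule tree of height $t$: $X_t \leq \sum_{u \in \mathcal{N}_t} Y_u$, with the $(Y_u)$ i.i.d.\@ of law $\mu_0$ given the tree. Conditioning on $\#\mathcal{N}_t$ and using $(\sum_{i=1}^N a_i)^n \leq N^{n-1} \sum_{i=1}^N a_i^n$ gives $\E(X_t^n) \leq \E[(\#\mathcal{N}_t)^n]\,\E(X_0^n) < \infty$, since $\#\mathcal{N}_t$ is geometric. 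Next, I would apply It\^o's formula to $(X_t)$ solving \eqref{eqn:mcKeanRepresentation} with $x \mapsto x^n$ (after a smooth truncation $x^n \phi(x/K)$ and $K \to \infty$, justified by the above moment bound). Using that $n x^{n-1} \1_{x>0} = n x^{n-1}$ on $\R_+$ as soon as $n \geq 2$, and Campbell's formula for the jumps exactly as in the proof of Proposition~\ref{prop:defedp}, this would yield
\begin{equation*}
  m_n'(t) = -n\,m_{n-1}(t) + m_n(t) + \sum_{k=1}^{n-1}\binom{n}{k} m_k(t)\, m_{n-k}(t), \qquad n \geq 2.
\end{equation*}

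Now set $\tilde m_k(t) \coloneqq e^{-kt} m_k(t)$. The base cases $n = 0$ and $n = 1$ are immediate: $\tilde m_0 \equiv 1$ and $\tilde m_1(t) \to F_\infty$ by Lemma~\ref{lem:freeEnergyExists}. Assuming inductively $\tilde m_k(t) \to k!\, F_\infty^k$ for every $k \leq n-1$, dividing the ODE above by $e^{nt}$ yields
\begin{equation*}
  \tilde m_n'(t) = -(n-1)\, \tilde m_n(t) + g_n(t), \qquad g_n(t) \coloneqq -n e^{-t} \tilde m_{n-1}(t) + \sum_{k=1}^{n-1} \binom{n}{k} \tilde m_k(t)\, \tilde m_{n-k}(t).
\end{equation*}
By the induction hypothesis and the identity $\binom{n}{k} k!(n-k)! = n!$, one checks that $g_n(t) \to (n-1)\, n!\, F_\infty^n$ as $t \to \infty$. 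Solving by variation of constants,
\begin{equation*}
  \tilde m_n(t) = e^{-(n-1)t}\, \tilde m_n(0) + \int_0^t e^{-(n-1)(t-s)}\, g_n(s)\,\diff s,
\end{equation*}
and since $n - 1 \geq 1$ and $g_n$ is bounded (again by the induction hypothesis), the change of variable $u = t-s$ followed by dominated convergence gives $\tilde m_n(t) \to n!\, F_\infty^n$, closing the induction.

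The main technical obstacle is the rigorous derivation of the moment ODE, because the test function $x^n$ is not covered by the class used in Definition~\ref{def:drm}. The truncation by $\phi(x/K)$ reduces the matter to showing that $\E(X_t^n \1_{X_t > K})$ vanishes uniformly on compact time intervals as $K \to \infty$, which follows from the first step together with the continuity in $t$ of the bound $\E(X_t^n) \leq \E[(\#\mathcal{N}_t)^n]\,\E(X_0^n)$. Beyond this technicality, the proof is a clean induction driven by the structure of the moment ODE and the crucial sign $1-n < 0$ of the linear coefficient after rescaling, which is precisely what produces the stable attractor $n! F_\infty^n$.
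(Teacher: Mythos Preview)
Your proof is correct and follows essentially the same approach as the paper's: both proceed by induction on $n$, derive the moment ODE from the SDE via It\^o's formula, and exploit the linear structure after rescaling by $\e^{-nt}$. The only minor differences are that the paper solves the ODE for $m_n$ first (integrating factor $\e^{-t}$) and then applies L'H\^opital's rule to $\e^{-(n-1)t}\int_0^t \e^{(n-1)s}[\cdots]\diff s$, whereas you rescale first and use variation of constants plus dominated convergence; also, you are more careful than the paper in justifying the use of the unbounded test function $x^n$ via truncation and the a priori moment bound from the Yule-tree domination.
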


\begin{proof}
We prove this result by recurrence on $n$. By Lemma \ref{lem:freeEnergyExists}, we know that $\E(X_t)\e^{-t} \to F_\infty(\mu_0)$ as long as $\E(X_0) < \infty$, hence the property is true for $n =1$.

Let $n \geq 2$ and assume the lemma to be true for initial random variables in $L^k$ for $k < n$. Let $X_0$ be a random variable in $L^n$, we note that $X_0 \in L^k$ for all $k < n$, hence by recurrence assumption, we have
\begin{equation}
  \label{eqn:recchyp}
  \lim_{t \to \infty} \E((\e^{-t}X_t)^k) = k! F_\infty(\mu_0)^k.
\end{equation}
Using the SDE representation of $X$, we now observe that we can write
\[
  \E(X_t^n) = \E(X_0^n) - n  \int_0^t  \E(X_s^{n-1}) \diff s + \int_0^t \E( (X_s + \tilde{X}_s)^n - X_s^n) \diff s,
\]
with $\tilde{X}_s$ an independent copy of $X_s$. We rewrite this equality as follows:
\[
  \E(X_t^n) = \E(X_0^n) + \int_0^t \E(X_s^n) \diff s + \int_0^t  \left( -n\E(X_s^{n-1}) + \sum_{k=1}^{n-1} \binom{n}{k}\E(X_s^k)\E(X_s^{n-k}) \right) \diff s.
\]
This first order differential equation can be solved as
\begin{equation} \label{eq:moments_X_t}
  \E(X_t^n) = \E(X_0^n)\e^{t} + \e^t \int_0^t \e^{-s} \left( -n \E(X_s)^{n-1} + \sum_{k=1}^{n-1} \binom{n}{k} \E(X_s^k) \E(X_s^{n-k})\right)\diff s,
\end{equation}
which can be rewritten
\begin{multline*}
  \E\left( (X_t\e^{-t})^n \right) = \E(X_0^n) \e^{-(n-1)t} - \e^{-(n-1)t} \int_0^t \e^{(n-2) s} n \E((X_s\e^{-s})^{n-1}) \diff s \\+ \e^{-(n-1)t} \int_0^t \e^{(n-1) s}\sum_{k=1}^{n-1} \binom{n}{k} \E((X_s \e^{-s})^k) \E((X_s\e^{-s})^{n-k})) \diff s.
\end{multline*}
Therefore, using \eqref{eqn:recchyp} and the L'Hospital rule, we obtain that $\E((X_t\e^{-t})^n)$ converges toward
\begin{align*}
  \frac{1}{n-1} \sum_{k=1}^{n-1} \binom{n}{k} \lim_{t \to \infty}\E((X_s \e^{-s})^k) \E((X_s\e^{-s})^{n-k})) &= \frac{1}{n-1} \sum_{k=1}^{n-1} \binom{n}{k} F_\infty(\mu_0)^n k!(n-k)! \\&= n!F_\infty(\mu_0)^n 
\end{align*}
completing the proof.
\end{proof}

A direct consequence of this proof is the convergence, as $t \to \infty$, of the law of~$\e^{-t} X_t$, as soon as $X_0$ has some finite exponential moments.
In fact, this convergence holds as soon as $F_\infty(\mu_0)$ is well-defined, which is equivalent to $\E[X_0] < \infty$ by Lemma \ref{lem:freeEnergyExists}.

\begin{proposition} \label{p:convergenceexponentielle}
Let $(X_t, t \geq 0)$ be a DR model with $\E(X_0)< \infty$. Then $\e^{-t} X_t$ converges in law toward an exponential random variable with parameter $F_\infty(\mu_0)^{-1}$.
\end{proposition}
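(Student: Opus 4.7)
The plan is to first handle the case where all moments of $X_0$ are finite by using the method of moments, then extend to the general case $\mathbb E(X_0)<\infty$ by a truncation argument based on the tree-painting coupling.

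\textbf{Step 1 (bounded case via moments).} When $X_0$ admits moments of all orders, Lemma \ref{lem:moments} yields $\mathbb E((\e^{-t}X_t)^n)\to n!\,F_\infty(\mu_0)^n$ for every $n\in\mathbb N$. These are the moments of the exponential law with parameter $F_\infty(\mu_0)^{-1}$ (interpreted as $\delta_0$ if $F_\infty(\mu_0)=0$). Since that law has a finite moment generating function in a neighborhood of the origin, it is uniquely determined by its moments and the method of moments gives $\e^{-t}X_t\Rightarrow \mathrm{Exp}(F_\infty(\mu_0)^{-1})$.

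\textbf{Step 2 (truncated coupling).} For the general case, fix $K>0$ and set $\mu_0^{(K)}$ to be the law of $X_0\wedge K$, which is bounded and hence admits all moments. Using the tree-painting scheme (Definition \ref{def:drt}), I would couple the two DR trees $X^t$ and $X^{t,K}$ by running them on the same Yule tree, assigning the labels $X_0(u)$ and $X_0(u)\wedge K$ at each leaf $u\in\mathcal N_t$. Since the recursion \eqref{Xtsv} involves only the map $(a,b)\mapsto(a+b-c)_+$, which is non-decreasing and $1$-Lipschitz in each argument, a straightforward induction from the leaves to the root gives
\[
0\le X^t_0(\varnothing)-X^{t,K}_0(\varnothing)\le \sum_{u\in\mathcal N_t}(X_0(u)-K)_+.
\]

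\textbf{Step 3 (control of the coupling error).} Since $\mathbb E(\#\mathcal N_t)=\e^{t}$ and the leaf labels are i.i.d.\@ with law $\mu_0$, Wald's identity gives
\[
\mathbb E\Bigl[\e^{-t}\sum_{u\in\mathcal N_t}(X_0(u)-K)_+\Bigr]=\mathbb E\bigl((X_0-K)_+\bigr),
\]
which tends to $0$ as $K\to\infty$ by dominated convergence (using $\mathbb E(X_0)<\infty$). In particular, by Markov's inequality, $\e^{-t}(X^t_0(\varnothing)-X^{t,K}_0(\varnothing))\to 0$ in probability, uniformly in $t\ge 0$. Taking expectations and letting $t\to\infty$ via Lemma \ref{lem:freeEnergyExists} also yields
\[
0\le F_\infty(\mu_0)-F_\infty(\mu_0^{(K)})\le \mathbb E\bigl((X_0-K)_+\bigr)\xrightarrow[K\to\infty]{}0.
\]

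\textbf{Step 4 (conclusion).} By Step 1, for each fixed $K$, $\e^{-t}X^{t,K}_0(\varnothing)$ converges in law as $t\to\infty$ to $\mathrm{Exp}(F_\infty(\mu_0^{(K)})^{-1})$. By Step 3, this limiting law converges weakly to $\mathrm{Exp}(F_\infty(\mu_0)^{-1})$ as $K\to\infty$, and the coupling error $\e^{-t}(X^t_0(\varnothing)-X^{t,K}_0(\varnothing))$ can be made arbitrarily small in probability uniformly in $t$. A standard $3\varepsilon$-argument on Lipschitz test functions then gives $\e^{-t}X_t\Rightarrow \mathrm{Exp}(F_\infty(\mu_0)^{-1})$.

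The main technical point is the Lipschitz/monotonicity estimate in Step~2: once it is established, everything else reduces to standard tightness and approximation arguments. This estimate should follow cleanly from the fact that the recursion operator $(a,b)\mapsto(a+b-c)_+$ is coordinatewise $1$-Lipschitz, so perturbations of leaf values propagate additively (not multiplicatively) up to the root.
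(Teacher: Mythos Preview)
Your proof is correct and takes a genuinely different route from the paper's. You leverage Lemma~\ref{lem:moments} to settle the bounded-initial-data case by the method of moments, then extend to $\E(X_0)<\infty$ via a truncation coupling on the Yule tree; the $1$-Lipschitz property of $(a,b)\mapsto(a+b-c)_+$ indeed makes the coupling error at the root bounded by the sum of the leaf perturbations, so Step~2 goes through exactly as you claim, and Steps~3--4 are standard. The paper, by contrast, does not use moments at all beyond the first: it obtains tightness of $(\e^{-t}X_t)_{t\ge0}$ from $\sup_t\E(\e^{-t}X_t)<\infty$, passes to a subsequential limit $Y$, and then exploits the decomposition $\e^{-(t+T)}X_{t+T}=\e^{-T}\sum_{u\in\mathcal N_T}\e^{-t}X_t^{(u)}-\e^{-(t+T)}R_T$ together with the explicit geometric law of $\#\mathcal N_T$ to compute the characteristic function $\E[\e^{i\xi Y}]=1/(1-i\xi F_\infty(\mu_0))$ directly. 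Your approach is more modular and the coupling idea is robust to variants of the model; the paper's argument is shorter, avoids truncation entirely, and in fact makes Lemma~\ref{lem:moments} unnecessary for this particular proposition.
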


\begin{proof}
We first observe that by Lemma \ref{lem:freeEnergyExists}, $\sup_{t \geq 0} \E(\e^{-t} X_t) < \infty$. Therefore the family $(\e^{-t} X_t, t \geq 0)$ is tight. The only thing left to do is thus to identify the limit in distribution.

Remark that for all $t,T \geq 0$, the random variable $X_{T+t}$ can be written as
\[
  \e^{-(t+T)} X_{t+T} = \e^{-T} \sum_{u \in \mathcal{N}_T} \e^{-t} X^{(u)}_t - \e^{-(t+T)} R_T,
\]
where, conditionally on $\mathcal{N}_T$, $(X^{(u)}_t, u \in \mathcal{N}_T)$ are i.i.d.\@ random variables with the same law as $X_t$, and $R_T$ is a random variable which takes values between $0$ and $L_T$ the length of the Yule tree of height $T$. 
Recall from \eqref{eq:length_of_Yule_tree} that $\lim_{T \to \infty} \e^{-T}L_T$ exists a.s.\@, hence if $t$ and $T$ both tend to $\infty$, then $\e^{-(t+T)} R_T$ converges to $0$ in probability.

We now write $(t_n, n \geq 1)$ a sequence along which $(X_{t_n})$ converges in law toward a random variable $Y$. Up to extraction, we can assume that $r_n \coloneqq t_{n+1}-t_n \to \infty$. Therefore, we have that
\[
  Y = \lim_{n \to \infty} \e^{-t_{n+1}} X_{t_{n+1}} = \lim_{n \to \infty} \e^{-r_n} \sum_{u \in \mathcal{N}_{r_n}} \e^{-t_n} X^{(u)}_{t_n}.
\]
Thus, computing the characteristic function of $Y$ for some $\xi \in \R$ and using that $\#\mathcal{N}_t$ is a geometric random variable with parameter $\e^{-t}$, we have
\begin{align*}
  \E\left[\e^{i \xi Y}\right] 
  &= \lim_{n \to \infty} \E\left[ \prod_{u \in \mathcal{N}_{r_n}} \e^{i\xi \e^{-r_n-t_{n}} X_{t_n}^{(u)}} \right]\\
  &= \lim_{n \to \infty} \frac{ \e^{-r_n} \E\left[ \e^{i\xi \e^{-r_n-t_{n}} X_{t_n}}  \right]}{1 - (1 - \e^{-r_n}) \E\left[ \e^{i\xi \e^{-r_n-t_{n}} X_{t_n}} \right]}\\
  &= \lim_{n \to \infty} \frac{\E\left[ \e^{i\xi \e^{-r_n-t_{n}} X_{t_n}} \right]}{\E\left( \e^{i\xi \e^{-r_n-t_{n}} X_{t_n}}  \right) + \e^{r_n}\E\left[ 1-\e^{i \xi \e^{-r_n-t_{n}} X_{t_n}} \right]}.
\end{align*}
Using the fact that $\E(X_{t_n} \e^{-t_n}) \to F_\infty(\mu_0)$ by Lemma \ref{lem:freeEnergyExists}, we have that
\[
  \E\left[ \e^{i\xi \e^{-r_n-t_{n}} X_{t_n}}  \right] = 1 + i \xi \e^{-r_n} F_\infty(\mu_0) + o(\e^{-r_n}) \quad \text{ as } n \to \infty.
\]
Therefore, this yields
\[
  \E\left[\e^{i \xi Y}\right] = \frac{1}{1 - i \xi F_\infty(\mu_0)},
\]
which is the characteristic function of the exponential distribution of parameter $F_\infty(\mu_0)^{-1}$, concluding the proof.
\end{proof}

In the rest of the section, we now focus on the (sub)critical case $F_\infty(\mu_0) = 0$. Previous lemma shows that $\lim_{t \to \infty} X_t \e^{-t} = 0$ in probability. However, an interesting problem is to obtain the precise asymptotic behavior of $X_t$ in this (sub)critical case. It is known that for discrete DR models with integer-valued measures, we have $\lim_{n \to \infty} X_n = 0$ in probability when the free energy $F_\infty=0$. Therefore, this yields the following conjecture.

\begin{conjecture}\label{c:xt->0}
If $F_\infty(\mu_0)=0$, then $X_t \to 0$ in probability as $t \to \infty$.
\end{conjecture}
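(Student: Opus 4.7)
My plan is to leverage Proposition \ref{p:convergenceexponentielle} (which already gives $\e^{-t} X_t \to 0$ in probability under $F_\infty(\mu_0) = 0$) together with a uniform first-moment bound and an invariant-measure argument, to show that $\mu_t$ converges weakly to $\delta_0$ along every subsequence.

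First, combining Lemma \ref{lem:freeEnergyExists} with $F_\infty(\mu_0) = 0$ yields
\[
  \int_0^\infty \e^{-s} \P(X_s > 0) \diff s \;=\; \E(X_0) \;<\; \infty
  \quad \text{and} \quad
  \E(X_t) \;=\; \e^t \int_t^\infty \e^{-s} \P(X_s > 0) \diff s \;\leq\; 1
\]
for every $t \geq 0$. Markov's inequality then yields $\P(X_t > K) \leq 1/K$ uniformly in $t$, so the family $(\mu_t)_{t \geq 0}$ is tight on $\R_+$. Any sequence $t_n \to \infty$ therefore admits (via a diagonal argument on a countable dense set of times) a subsequence along which $\mu_{t_n + s} \to \tilde\mu_s$ weakly for every $s \geq 0$. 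Given continuity of the DR flow with respect to its initial condition (which one would establish from the tree-painting representation of Definition \ref{def:drt} by coupling i.i.d.\ copies of $\mu_0^{(n)}$ and $\mu_0$ at the leaves and using the almost-sure continuity of the painting recursion), the family $(\tilde\mu_s)_{s \geq 0}$ is again a DR model. Shift-invariance of the limit forces $\tilde\mu_s \equiv \tilde\mu_0$, so $\tilde\mu_0$ is a stationary probability measure of the PDE \eqref{eqn:pdeDef}. The conjecture thus reduces to showing that $\delta_0$ is the only stationary probability with finite first moment.

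The hard part is this classification step. Writing $\tilde\mu_0 = p \delta_0 + f(x) \diff x$, stationarity of \eqref{eqn:pdeDef} splits (on $\{0\}$ and on $(0,\infty)$) into the two conditions $f(0+) = p(1-p)$ and $f'(x) + (2p-1) f(x) + (f * f)(x) = 0$ on $(0,\infty)$, of which $\delta_0$ is visibly a solution (take $p = 1$, $f \equiv 0$). To rule out nontrivial solutions, a natural strategy is to exploit stochastic monotonicity of the DR dynamics in its initial distribution---provable by coupling two processes on a common Yule tree and applying the monotone recursion $X_s^t(v) = (X^t_{b_{v1}}(v1) + X^t_{b_{v2}}(v2) - (d_v - s))_+$ leafwise---to dominate any candidate stationary $\tilde\mu_0$ from above by a mixture of a Dirac mass and an exponential, whose evolution is explicitly tractable via Section \ref{s:casexponentiel} and whose phase diagram (Figure \ref{fig:phaseDiagramIntro}) forces convergence to $\delta_0$ in the unpinned regime. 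Constructing such a dominating exponential mixture with vanishing free energy is delicate when the law $\mu_0$ has heavy tails, since any exponential dominating $\mu_0$ typically has strictly positive free energy, and it is here that a genuinely new idea seems required---perhaps a direct second-moment analysis of the painting scheme showing that the red component fails to reach the root asymptotically almost surely, by comparing the initial paint carried by the red leaves with the red length.
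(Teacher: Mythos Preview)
This statement is presented in the paper as an open \emph{conjecture}; the authors explicitly write ``We were not able to prove these conjectures'' and offer no argument beyond the analogy with the discrete integer-valued case. So there is no paper proof to compare against, and your proposal is an attempt at something the paper leaves open.

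Your outline has two genuine gaps. First, the passage from ``$(\mu_t)$ is tight and $\mu_{t_n+s}\to\tilde\mu_s$ along a subsequence'' to ``$\tilde\mu_s\equiv\tilde\mu_0$'' is not valid as stated. Continuity of the flow would give $\tilde\mu_s=\Phi_s(\tilde\mu_0)$, i.e.\ the $\omega$-limit set is flow-invariant, but that does not force each limit point to be a fixed point of the flow. To conclude stationarity you would need something stronger---for instance a Lyapunov functional that is strictly decreasing off the fixed-point set, or an argument that the $\omega$-limit set is a single point. Neither is supplied.

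Second, and more seriously, you yourself flag that the classification of stationary probabilities with finite mean is the ``hard part'' and that ``a genuinely new idea seems required''. The domination-by-exponential-mixture strategy you sketch does not close: if $\tilde\mu_0$ has an atom $p<1$ at zero and any tail on $(0,\infty)$, an exponential mixture dominating it stochastically may well lie in the pinned region $\mathscr P$ of the phase diagram, in which case its trajectory converges to $(0,0)$ with \emph{positive} free energy and tells you nothing about $\tilde\mu_0$. The paper's Proposition~\ref{p:conditionnessaire} gives several necessary conditions on subcritical laws (e.g.\ $\E[X_t\e^{X_t}]\le\E[\e^{X_t}]$), but these are not known to be sufficient, and indeed the authors pose the full characterisation as an open question. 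In short, your reduction to stationary measures is incomplete at the reduction step and open at the classification step; the conjecture remains a conjecture.
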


Assuming the above conjecture, one might be interested in the limiting law of $X_t$ conditioned to be positive. In light of \eqref{edppetphi}, if such a limit exists, its density should satisfy $\phi'(x)= - \phi(0) \phi(x), x>0.$ This implies the following conjecture:
 
 \begin{conjecture}\label{c:xt->exp}
If $F_\infty(\mu_0)=0$, then  ${\mathcal L}( X_t \, |\, X_t>0)$ converges to an exponential distribution of parameter larger  than or equal to $1$.
\end{conjecture}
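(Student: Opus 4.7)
The plan is to assume Conjecture~\ref{c:xt->0}, so that $p(t) \coloneqq \P(X_t = 0) \to 1$, and to work with the conditional distribution $\nu_t \coloneqq \mathcal{L}(X_t \mid X_t > 0)$, writing $\mu_t = p(t) \delta_0 + (1-p(t)) \nu_t$. The strategy has three steps: first, establish tightness of $(\nu_t)_{t\ge0}$; second, identify any subsequential weak limit as an exponential distribution; third, show that its parameter is at least $1$.

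For tightness, I would first try to control the conditional mean $m(t) \coloneqq \E[X_t \mid X_t > 0]$. Using $\E[X_t] = (1-p(t))\,m(t)$ together with Lemma~\ref{lem:freeEnergyExists}, one obtains the explicit identity
\[
m(t) = \frac{1}{1-p(t)}\, \e^t \int_t^\infty \e^{-s}(1-p(s))\, \diff s,
\]
and a monotone-coupling comparison with the exactly solvable exponential model of Section~\ref{s:casexponentiel} should give $\sup_t m(t) < \infty$. Higher moments could then be controlled by iterating the recursion derived in \eqref{eq:moments_X_t} applied to the conditional law, yielding uniform integrability and hence tightness of $(\nu_t)$.

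For identifying the limit, assume that along a subsequence $t_n \to \infty$ we have $\nu_{t_n} \to \nu_\infty$ weakly, and suppose for the moment that $\nu_t$ admits a density $\phi(t,\cdot)$. Inserting $\mu_t = p(t)\delta_0 + (1-p(t))\phi(t,\cdot)\diff x$ into the weak PDE \eqref{eqn:pdeDef} yields the system~\eqref{edppetphi}; as $t \to \infty$ the convolution term $(1-p(t))\phi \ast \phi$ becomes negligible compared with $\phi$ since $1-p(t) \to 0$, and the dominant balance of the equation for $\phi$ reduces to $\partial_t \phi \simeq \partial_x \phi + \phi(t,0)\,\phi$. Any stationary density $\phi_\infty$ for this limiting transport equation must satisfy $\phi_\infty'(x) + \phi_\infty(0)\, \phi_\infty(x) = 0$ on $(0,\infty)$, whence $\phi_\infty(x) = \lambda \e^{-\lambda x}$ with $\lambda = \phi_\infty(0)$. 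The delicate point is to make sense of the pointwise boundary value $\phi(t,0)$; one approach is to test \eqref{eqn:pdeWeak1} against suitable approximations of $\1_{[0,\epsilon]}$, send $\epsilon \to 0$, and extract convergence of the boundary trace by equicontinuity estimates.

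For the bound $\lambda \ge 1$, the cleanest route is via the first equation in \eqref{edppetphi}: $p'(t) = (1-p(t))(\phi(t,0) - p(t))$. Since $p(t) \uparrow 1$ with $p(t) \le 1$, an elementary argument ruling out sustained decrease forces $\liminf_{t\to\infty} \phi(t,0) \ge 1$, hence $\lambda \ge 1$. As a complementary check, if one had $\lambda < 1$, then starting the DR dynamics from $\widetilde\mu_0 \coloneqq q\delta_0 + (1-q)\lambda\e^{-\lambda x}\diff x$ with $q$ arbitrary lies in the supercritical zone $\mathscr{P}$ by Theorem~\ref{thm:asympBehaviour} and has strictly positive free energy; a stochastic domination argument propagating this back to $\mu_0$ would then contradict $F_\infty(\mu_0) = 0$.

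The main obstacle is twofold: establishing tightness without extra integrability assumptions on $\mu_0$, and justifying rigorously the passage to the limit in~\eqref{edppetphi}. The PDE \eqref{eqn:pdeDef} only characterizes $\nu_t$ as a measure, so a priori neither a density nor the boundary value $\phi(t,0)$ needs to exist. A natural remedy is to first prove the statement for a dense subclass of initial data where regularity (continuity of the density and of its trace at $0$) can be propagated by the dynamics, and then approximate general $\mu_0$ by such laws using the monotonicity and continuity properties of the DR evolution established in Section~\ref{sec:defmodel}.
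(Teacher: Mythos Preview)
The statement you are addressing is labeled a \emph{Conjecture} in the paper, and the authors explicitly write, immediately afterward, ``We were not able to prove these conjectures''. There is no proof in the paper to compare against. The only argument the paper offers is the heuristic preceding the conjecture: assuming a limiting density $\phi_\infty$ exists, the stationary form of \eqref{edppetphi} forces $\phi_\infty'(x) = -\phi_\infty(0)\phi_\infty(x)$, hence exponential. Your identification step is exactly this heuristic, fleshed out.

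Your proposal is an honest \emph{outline} rather than a proof, and you correctly flag the two real gaps: tightness of $(\nu_t)$ without extra assumptions on $\mu_0$, and making sense of the boundary trace $\phi(t,0)$ when $\nu_t$ is only known as a measure. Neither of these is resolved in the paper, and your suggested remedies (coupling with the solvable model for moment bounds, working first on a regular subclass and approximating) are plausible but not carried out. In particular, the monotone-coupling comparison with the exponential model to bound $\sup_t m(t)$ is not obvious: you would need to sandwich an arbitrary subcritical $\mu_0$ between two exponential-type initial laws in a way that survives the nonlinear dynamics, and nothing in Section~\ref{s:casexponentiel} or Section~\ref{s:general} provides such a comparison. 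Your argument for $\lambda \ge 1$ via $p'(t) = (1-p(t))(\phi(t,0)-p(t))$ is also contingent on the existence and convergence of $\phi(t,0)$, which is precisely the unresolved point. In short, your sketch matches the paper's own heuristic and correctly identifies where a proof would have to do real work; the paper does not do that work either.
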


We were not able to prove these conjectures, and more generally to obtain characterizations, in terms of the law of $X_0$, of measures satisfying $F_\infty(\mu_0) = 0$ (as \eqref{eqn:cegm} in the discrete case with integer-valued measures). However, we obtained the following partial answer, which is notably similar to what is known in the general discrete DR model.

\begin{proposition}\label{p:conditionnessaire}
Let $(X_t, t \geq 0)$ be a DR process, with $\mu_0$ the law of $X_0$ satisfying $F_\infty(\mu_0)=0$. The following results hold:
\begin{enumerate}
  \item For any $t,x \geq 0$, $\P(X_t \geq x) \leq \e^{1-x}$.
  \item For any $t\geq 0$, $\E[X_t^2] < \frac{1}{2}$ and, in particular, $\E[X_t] \leq \frac{1}{\sqrt{2}}$.
  \item For any $\theta \in (0,1)$ and $t \geq 0$,
  \[
    \E\left[ \left(1 - \theta X_t - 2(1-\theta)X_t^2\right)\e^{\theta X_t} \right] \geq 0. 
  \]
  \item We have $\sup_{t\geq 0} \E[X_t\e^{X_t}] < \infty$ and, for any $t\geq0$, $\E[X_t\e^{X_t}] \leq \E[\e^{X_t}]$. 
\end{enumerate}
\end{proposition}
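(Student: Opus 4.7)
The four claims are intertwined, and my plan is to treat Part~(3) as a master inequality: Part~(2) and the inequality in Part~(4) will then follow as the boundary limits $\theta\to 0^+$ and $\theta\to 1^-$, and the tail bound~(1) and the finiteness of the supremum in~(4) will be obtained from the resulting control on moments. All arguments rely on applying It\^o's formula to the SDE~\eqref{eqn:DRsde}: for any $\mathcal{C}^1$ function $f$ with suitable growth,
\[
\frac{\diff}{\diff t}\E[f(X_t)] = -\E[f'(X_t)\ind{X_t>0}] + \E[f(X_t+\widetilde X_t) - f(X_t)],
\]
where $\widetilde X_t$ is an independent copy of $X_t$. The key preliminary observation is that, applying Lemma~\ref{lem:freeEnergyExists} to the process shifted to time $t$ (using that $F_\infty(\mu_t)=F_\infty(\mu_0)=0$), one obtains the uniform bound
\[
\E[X_t] = \int_0^\infty \e^{-u}\P(X_{t+u}>0)\,\diff u \leq 1 \qquad \text{for every } t\geq 0,
\]
which drives everything that follows.

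For Part~(3), set $\phi_\theta(t)=\E[\e^{\theta X_t}]$, $\psi_\theta=\partial_\theta\phi_\theta=\E[X_t\e^{\theta X_t}]$, $\chi_\theta=\partial_\theta^2\phi_\theta=\E[X_t^2\e^{\theta X_t}]$ and $G_\theta(t)=\phi_\theta-\theta\psi_\theta-2(1-\theta)\chi_\theta$. Applying It\^o with $f(x)=\e^{\theta x}$ yields $\partial_t\phi_\theta = \phi_\theta^2 - (1+\theta)\phi_\theta + \theta\P(X_t=0)$, and differentiating twice in $\theta$ produces closed ODEs for $\psi_\theta$ and $\chi_\theta$. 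A direct but tedious computation then shows that $G_\theta$ satisfies an equation of the form $G_\theta'(t) = 2\phi_\theta(t)\,G_\theta(t) + R_\theta(t)$, where $R_\theta$ is an explicit polynomial expression in $\phi_\theta,\psi_\theta,\chi_\theta$. The plan is to show that $G_\theta\geq 0$ propagates forward in time through this linearised equation, exploiting the boundary behaviour $\phi_\theta\to 1$, $\psi_\theta,\chi_\theta\to 0$ as $t\to\infty$ (an $\e^{\theta X_t}$-moment refinement of Proposition~\ref{p:convergenceexponentielle}, established via a uniform integrability argument).

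Specialising $\theta\to 0^+$ in Part~(3) gives $\E[X_t^2]\leq 1/2$; strict inequality is extracted more directly from It\^o with $f(x)=x^2$, which produces $\tfrac{\diff}{\diff t}(\e^{-t}\E[X_t^2]) = -2\e^{-t}\E[X_t](1-\E[X_t])$. Combining this with the preliminary bound and the identification $\lim_{t\to\infty}\e^{-t}\E[X_t^2]=0$ (via the monotonicity together with Lemma~\ref{lem:moments}) yields
\[
\E[X_t^2] = 2\int_0^\infty \e^{-u}\E[X_{t+u}](1-\E[X_{t+u}])\,\diff u < \frac{1}{2},
\]
strict inequality following from the fact that $x(1-x)=1/4$ holds only at $x=1/2$. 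Similarly, taking $\theta\to 1^-$ in Part~(3) gives Part~(4)'s inequality $\E[X_t\e^{X_t}]\leq\E[\e^{X_t}]$; the elementary bound $\e^y\leq 1+y\e^y$ for $y\geq 0$ then yields $\E[\e^{X_t}]\leq 1+\E[X_t\e^{X_t}]$, which bootstraps to $\sup_t\E[\e^{X_t}]<\infty$ and hence $\sup_t\E[X_t\e^{X_t}]<\infty$. For Part~(1), combining the ODE $\phi_1'=(\phi_1-1)^2-\P(X_t>0)$ derived from It\^o with $f(x)=\e^x$ together with Part~(4)'s inequality and the asymptotic $\phi_1(t)\to 1$ yields the uniform bound $\phi_1(t)\leq \e$, and Markov's inequality then delivers $\P(X_t\geq x)\leq \e\cdot\e^{-x}=\e^{1-x}$.

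\textbf{The main obstacle} will be the pair of positivity arguments: showing that $G_\theta\geq 0$ is propagated in Part~(3) (where $R_\theta$ is not manifestly signed, requiring a careful backward-in-time analysis anchored at $t=\infty$) and that the monotone limit $\lim_{t\to\infty}\e^{-t}\E[X_t^2]$ vanishes in Part~(2) without appealing to the still-open Conjecture~\ref{c:xt->0}. Both amount to ruling out degenerate initial distributions compatible with $F_\infty(\mu_0)=0$ but violating the claimed integrability bounds, and I expect to handle them through Lemma~\ref{lem:moments} applied uniformly in shifted starting times, combined with the tightness of $(\e^{-t}X_t)_{t\geq 0}$ provided by Proposition~\ref{p:convergenceexponentielle}.
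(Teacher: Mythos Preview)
Your plan has a genuine circularity that cannot be closed with the tools you cite. You want to prove Part~(3) first and deduce Parts~(1), (2), (4) from it. But your argument for Part~(3) is anchored at $t=\infty$: you need $\phi_\theta(t)\to 1$ and $\psi_\theta,\chi_\theta\to 0$, i.e.\ $\E[\e^{\theta X_t}]\to 1$. Proposition~\ref{p:convergenceexponentielle} only gives $\e^{-t}X_t\to 0$ in law, and the ``uniform integrability argument'' you invoke to upgrade this to convergence of exponential moments would itself require a uniform bound on $\E[\e^{\lambda X_t}]$ for some $\lambda>\theta$ --- which is exactly what Part~(1) provides and what you have not yet proved. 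The same circularity appears in your derivation of Part~(1): you want $\phi_1(t)\to 1$ to extract $\phi_1\leq \e$ from the ODE, but that limit is Conjecture~\ref{c:xt->0}, which is open.

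You also miss the key structural fact that makes Part~(3) work. The ODE for $G_\theta$ (the paper's $\phi$) is
\[
G_\theta' = (2\phi_\theta - \theta - 1)\,G_\theta + \eta,\qquad \eta = 2b(\psi_\theta - \psi_\theta^2) - (\phi_\theta^2 - \theta\phi_\theta),
\]
and with the specific choice $b=2(1-\theta)$ one has $\eta\leq 0$ (since $\psi_\theta - \psi_\theta^2\leq 1/4$ and $\phi_\theta^2-\theta\phi_\theta\geq 1-\theta$). Your claim that ``$R_\theta$ is not manifestly signed'' shows you have not found this. With $\eta\leq 0$ and the coefficient $2\phi_\theta-\theta-1\geq 1-\theta>0$, the argument is forward and by contradiction: if $G_\theta(r)<0$ for some $r$, then $G_\theta(t)\to -\infty$, contradicting the exponential moment bound from Part~(1). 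No anchoring at $t=\infty$ is needed.

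The paper's logical order is therefore the reverse of yours: Part~(1) comes first, via a short probabilistic coupling on the Yule tree (namely $X_{t+s}\geq \sum_{u\in\mathcal N_s}\ind{X_t^{(u)}>s+1}$, giving $\e^s\P(X_t>s+1)\leq \E[X_{t+s}]\leq 1$), and then Part~(3) is proved by the contradiction argument above using Part~(1) as the a~priori bound. Parts~(2) and~(4) then follow much as you outline.
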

Note that  Point 4  gives a characterization of the subcritical case similar to \eqref{eqn:cegm} for the discrete model, but here it does not depend only on moments of $X_0$.
The main reason for this is that the sign of $\E[X_n 2^{X_n}] - \E[2^{X_n}]$ is an invariant of the dynamics for the discrete model with integer valued measures, whereas it is not the case for $\E[X_t\e^{X_t}] - \E[\e^{X_t}]$ here.
\begin{proof}
We first observe that, if $F_\infty(\mu_0)=0$, then $\E(X_t) \leq 1$ for any $t \geq 0$. 
Indeed, by Lemma \ref{lem:freeEnergyExists}, we have
\begin{equation}
  \label{eq:esperanceX_t_souscritique}
\E[X_t] = \int_t^\infty \e^{-(s-t)} \P(X_s>0) \diff s
\end{equation}
and it follows that $\E[X_t] \leq 1$.

Let $s,t \geq 0$, we consider the Yule tree representation of $X_{t+s}$. One has immediately that
\[
  X_{t+s} \geq \sum_{u \in \mathcal{N}_s} (X^{(u)}_t -s)_+ \geq \sum_{u \in \mathcal{N}_s} \1_{\{X^{(u)}_t > s+1\}},
\]
where, conditionally on $\mathcal{N}_s$, $(X^{(u)}_t, u \in \mathcal{N}_s)$ are i.i.d.\@ random variables with the same law as $X_t$.
Hence, we have that $\E(X_{t+s}) \geq \e^s \P(X_t > s+1)$ and, since $\E(X_{t+s}) \leq 1$, it proves Point 1.

It follows from Point 1.\@ that $\sup_{t\geq 0} \E[X_t^2] < \infty$.
Then, using \eqref{eq:moments_X_t} with $n=2$, we have
\[
  \E[X_t^2] 
  = \e^{t} \E[X_0^2] + \e^t \int_0^t 2 \e^{-s} \E[X_s] \left( \E[X_s] -1 \right)\diff s,
\]
and, letting $t \to \infty$,
\[
  \E[X_0^2] = 2 \int_0^\infty  \e^{-s} \E[X_s] \left( 1- \E[X_s] \right)\diff s
  \leq \frac{1}{2},
\]
using that $\E[X_s] \in [0,1]$ for any $s \geq 0$.
Finally, $\E[X_t^2] \leq \frac{1}{2}$ holds for any $t\geq 0$, because $F_\infty(\mu_t) = \e^t F_\infty(\mu_0) = 0$.
This proves Point 2.

Let $b \in \R$ be a constant whose value will be fixed later on. Consider the function
\[
  \phi(t) \coloneqq \E\left[ (1 - \theta X_t - b X_t^2)\e^{\theta X_t} \right].
\]
By It\^o's formula, we have that
\begin{align*}
  \phi'(t) 
  & = \E\left[ \left((\theta^2 + 2 b) X_t + \theta b X_t^2\right) \e^{\theta X_t} \right]\\ 
  & \relphantom{=} {} + \E\left[ \left(1 - \theta (X_t+Y_t) - b (X_t + Y_t)^2 \right)  \e^{\theta (X_t + Y_t)}\right] - \E\left[\left(1 - \theta X_t - bX_t^2\right)\e^{\theta X_t}\right],
\end{align*}
where $Y_t$ is an independent copy of $X_t$. We introduce the functions
\[
  g(t) \coloneqq \E[\e^{\theta X_t}] 
  \quad \text{and} \quad 
  h(t) \coloneqq \E[X_t \e^{\theta X_t}],
\]
we can rewrite
\[
  \phi' = (2 g - \theta - 1) \phi + 2b(h-h^2) - (g^2 - \theta g).
\]
Let $\eta \coloneqq 2 b (h - h^2) - (g^2 - \theta g)$, then, for any $0 \leq r \leq t$, the differential equation can be solved as
\[
  \phi(t) = \e^{\int_r^t (2g(s) - \theta - 1) \diff s} \left( \phi(r) + \int_r^t \eta(s) \e^{\int_r^s(2 g(u) - \theta - 1) \diff u} \diff s \right).
\]
On the one hand, we have $g \geq 1$ and therefore $g^2 -\theta g \geq 1 - \theta$ and, on the other hand, $h-h^2 \leq 1/4$.
Thus, choosing $b \coloneqq 2(1-\theta)$, we have $\eta \leq b/2 - 1 +\theta = 0$.
If $\phi(r) < 0$ for some $r \geq 0$, then 
\[
\phi(t) \leq \e^{\int_r^t (2g(s) - \theta - 1) \diff s} \phi(r) 
\xrightarrow[t \to \infty]{} -\infty,
\]
which contradicts the fact that, by Point 1., $\sup_{t \geq 0} \E[\e^{\lambda X_t}] < \infty$ for any $\lambda \in (\theta,1)$. Therefore, $\phi(r) \geq 0$ for any $r \geq 0$ and this proves Point 3.

Now we prove that necessarily $\E[X_0 \e^{X_0}] < \infty$. Indeed, if $\E[X_0 \e^{X_0}] = \infty$, then $\lim_{n\to\infty} \E [X_0 \e^{X_0} \1_{\{2 \le X_0< n\}}] \to \infty$. Choose and fix an integer $n$ such that 
\[\E [X_0 \e^{X_0} \1_{\{2 \le X_0< n\}}] > 2.\]
Write $Y_0\coloneqq X_0 \1_{\{2 \le X_0< n\}}$ and $(Y_t)_{t\ge 0}$ the solution of the SDE \eqref{eqn:DRsde} starting from $Y_0$. By coupling, a.s.\@ $X_t\ge Y_t$ for all $t\ge0$. 
Observe that 
\[
\E [(1-Y_0) \e^{Y_0}] 
= \P(Y_0=0)+\E [(1-Y_0) \e^{Y_0} \1_{\{Y_0>0\}}] 
\le 1- \frac{1}{2} \E [ Y_0 \e^{Y_0} \1_{\{Y_0>0\}}] < 0. 
\]
By continuity, for $\theta<1$ sufficiently close to $1$, we have $\E[(1- \theta Y_0) \e^{\theta Y_0}] < 0$, which, by Point 3., proves that $\lim_{t\to\infty} \e^{-t} \E[Y_t] >0$. But we have $F_\infty(\mu_0) \ge \lim_{t\to\infty} \e^{-t} \E[Y_t] >0$, so it contradicts the assumption $F_\infty(\mu_0)=0$. This proves that $\E[X_0 \e^{X_0}] < \infty.$

The same argument shows that, for any $t>0$, $\E[X_t \e^{X_t}] < \infty$. 
Therefore, noting that $(1-\theta)X_t^2 \e^{\theta X_t} \leq X_t \e^{X_t} \sup_{x \geq 0} x \e^{-x}$, we can apply dominated convergence theorem as $\theta\to 1$ in Point 3.\@ and get $\E[(1-X_t) \e^{X_t}] \geq 0$ for any $t \geq 0$.
This proves the second part of Point 4.\@ and can also be re-written as 
\[
\E[(1-X_t) \e^{X_t} \1_{\{X_t< 2\}}] \ge \E[(X_t-1) \e^{X_t} \1_{\{X_t\ge 2\}}].
\]
Note that, on the one hand, $\E[(1-X_t) \e^{X_t} \1_{\{X_t< 2\}}] \le \e^2$ and, on the other hand, $\E[(X_t-1) \e^{X_t} \1_{\{X_t\ge 2\}}]\ge \frac12 \E[X_t \e^{X_t} \1_{\{X_t\ge 2\}}]$. Hence $ \E[X_t \e^{X_t} \1_{\{X_t\ge 2\}}]\le 2 \e^2$, which yields that $ \E[X_t \e^{X_t} ] \le 4 \e^2$ and proves the first part of Point 4.
\end{proof}

In addition to Conjectures \ref{c:nombreetmasse} and  \ref{c:xt->0}, we list  some open questions below. At first, Proposition \ref{p:conditionnessaire} gives some necessary conditions on $\mu_0$ so that the free energy vanishes. Naturally we may ask 

\begin{question}
What are necessary and sufficient condition on $\mu_0$ so that $F_\infty(\mu_0)=0$?
\end{question}

It is  predicted by physicists that the infinite order phase transition  holds for a large classe of recursive model.  This motivates  our next question:
\begin{question}
Replace  the  indicator function $\ind{x>0}$ in  \eqref{eqn:mcKeanRepresentation} by a monotone increasing bounded function $f$ with $f(0)=0$, such as  $\tanh(x), 1 -\e^{-x}$ or $x \wedge 1$. For regular $f$,  there exists a unique strong solution of the  McKean--Vlasov type SDE, see \cite{Gra92}. Are the phase transitions for these models of same nature as for the DR process? 
\end{question}

Concerning our exactly solvable model, a crucial point is the preservation of the mixture of exponential distributions and Dirac masses at $0$. This was proved by using the explicit solution of \eqref{eqn:pdeDef}. However, there is no direct probabilistic proof of this result, yielding the following question.

\begin{question}
Is there any probabilistic proof that the family of mixtures of exponentials and Dirac masses at $0$  is stable under  the dynamics of  the continuous--time DR model? 
\end{question}

\paragraph{Acknowledgements} We are grateful to Bernard Derrida and Zhan Shi for stimulating discussions at the beginning of the project.


\end{document}